\documentclass[11pt,a4paper]{article}

%=====================================================

\usepackage[utf8]{inputenc}
\usepackage{amsmath}
\usepackage{mathtools}
\usepackage{amsfonts}
\usepackage{bbm} %blackboard 1
\usepackage{bm} 
\usepackage{amssymb}
\usepackage{amsthm}
\usepackage{geometry}
\usepackage{mathtools}
\usepackage[noadjust]{cite}
\usepackage[usenames,dvipsnames]{xcolor}
\usepackage[vcentermath]{genyoungtabtikz}
\usepackage{tikz}
\usetikzlibrary{decorations.pathreplacing}
\usepackage{kpfonts}
\usepackage{subcaption}
\usepackage{graphicx}
\usepackage{setspace}
\usepackage[colorlinks, linkcolor=RoyalBlue, citecolor=RoyalBlue, urlcolor=RoyalBlue]{hyperref}
\usepackage{xcolor}
\usepackage{array}
\usepackage{multirow}
\newcolumntype{?}{!{\vrule width 1pt}}
\usepackage[final]{showlabels}
%\usepackage[pagewise, displaymath, mathlines]{lineno} % showing line numbers
%\linenumbers

\geometry{left=2cm,right=2cm,top=3cm,bottom=3cm}

%=====================================================

\theoremstyle{plain}
\newtheorem{proposition}{Proposition}[section]
\newtheorem{theorem}[proposition]{Theorem}
\newtheorem{corollary}[proposition]{Corollary}
\newtheorem{lemma}[proposition]{Lemma}

\theoremstyle{definition}
\newtheorem{definition}[proposition]{Definition}
\newtheorem{example}[proposition]{Example}
\newtheorem{remark}[proposition]{Remark}
\newtheorem*{remark*}{Remark}

%=====================================================

%notations
\newcommand{\Sn}{\mathfrak{S}_n}
\newcommand{\An}{\mathfrak{A}_n}
\newcommand{\C}{\mathbb{C}}
\newcommand{\DD}{\mathbf{D}}
\newcommand{\Reg}[2]{\textup{Reg}_{#1}(#2)}
\newcommand{\F}{\mathbb{F}}

\newcommand{\Par}[1]{\textup{Par}(#1)}
\newcommand{\m}{\bm{m}}
\newcommand{\Bgot}{\mathfrak{B}}
\newcommand{\lessdom}{\trianglelefteq}
\newcommand{\ldom}{\trianglelefteq}
\newcommand{\domleq}{\unlhd}

\newcommand{\dr}{\partial}
\newcommand{\dreg}{\dr^{\text{reg}}}
\newcommand{\li}[1]{{}_{#1}}
\newcommand{\lessup}{\mathbin{\rotatebox[origin=c]{45}{$\trianglelefteq$}}}
\newcommand{\lesdow}{\mathbin{\rotatebox[origin=c]{-45}{$\trianglelefteq$}}}

\newcommand{\fn}[1]{\lceil #1 \rfloor}
\newcommand{\FN}[1]{\left\lceil #1 \right\rfloor}
\newcommand{\p}[1]{$p$-#1}

%young diagrams

\newcommand\fg{\Yfillcolour{black!20}}

\newcommand\fw{\Yfillcolour{white}}

\newcommand{\lr}[1]{\langle #1 \rangle}
\newcommand{\Lr}[1]{\left\langle #1 \right\rangle}

%abacus
%draw bead
\newcommand{\bd}{
\begin{tikzpicture}
\fill (0,0) circle (0.15);
\draw [line width=0.3mm](0,-0.4)--(0,0.4);
\end{tikzpicture}}
\newcommand{\pa}{
\begin{tikzpicture}
\draw [line width=0.3mm](-0.1,0)--(0.1,0);

\draw [line width=0.3mm](0,-0.4)--(0,0.4);
\end{tikzpicture}}

%colortext

%highlighting in math mode
\newcommand{\hil}[1]{%
  \colorbox{black!20}{$\displaystyle#1$}}

%triangle example
\newcommand\xqed[1]{%
  \leavevmode\unskip\penalty9999 \hbox{}\nobreak\hfill
  \quad\hbox{#1}}
\newcommand\demo{\xqed{$\triangle$}}

%======================================================
\author{Ana Bernal \\ \footnotesize{Laboratoire de Math\'ematiques de Reims UMR9008 CNRS et Université de Reims Champagne-Ardenne,}\\ \footnotesize{Moulin de la Housse BP 1039, 51687 REIMS cedex 2, France.} \\ \href{mailto:ana.bernal@univ-reims.fr}{\footnotesize{\texttt{ana.bernal@univ-reims.fr}}}}

\title{Unitriangular basic sets for blocks of the symmetric and alternating groups of small weights}
\date{}

%======================================================

\begin{document}
\maketitle

%\tableofcontents

\begin{abstract}

We study the existence of unitriangular basic sets for the symmetric group which behave nicely with respect to the Mullineux involution. Such sets give a natural labelling for the modular irreducible representations. We show that, for any odd prime $p$, the $p$-blocks of the symmetric group with weight $2$ have stable unitriangular basic sets which we describe by studying the combinatorics of partitions in these blocks.

\end{abstract}

\section{Introduction}\label{intro}

Let $n$ be a positive integer and $\Sn$ the symmetric group on $n$ symbols. Let $p$ be an odd prime and let $\F_p$ be a field with $p$ elements. The simple $\C\Sn$-modules can be naturally indexed by the set of partitions of $n$:
\[
\textup{Irr}(\C\Sn)=\{S^\lambda \mid \lambda\  \text{is a partition of $n$}\}.
\]
The modules $S^\lambda$ are the so called Specht modules. Specht modules $S^\lambda$ are actually defined over $\mathbb{Z}$ and thus it is possible to ``reduce mod $p$'' and see $S^\lambda$ as an $\F_p\Sn$-module. In general, Specht modules over $\F_p$ are not simple, or not even completely reducible, but one can look at the composition multiplicities of the simple $\F_p\Sn$-modules in $S^\lambda$. These multiplicities give interesting information to study the representation theory of $\Sn$ over $\F_p$. For a simple $\F_p\Sn$-module $D$, let $d_{\lambda D}=[S^\lambda : D]$ be the composition multiplicity of $D$ in the Specht module $S^\lambda$. The numbers $d_{\lambda D}$ form then a matrix $\mathbf{D}_{n,p}$ with non-negative integer coefficients, rows indexed by the partitions of $n$ and columns indexed by the simple $\F_p\Sn$-modules. This is the \emph{decomposition matrix} of $\F_p\Sn$.

When the rows of $\mathbf{D}_{n,p}$ are organised decreasingly according to any total order on partitions which refines the dominance order, something of special interest happens: for each column, the first non-zero entry is $1$ and the associated row corresponds to a unique \emph{$p$-regular partition} of $n$. A $p$-regular partition is a partition in which no part is repeated $p$ or more times. This gives a natural indexing of the simple $\F_p\Sn$-modules by the set of $p$-regular partitions of $n$:
\begin{equation}\label{eq:simplefpsnmodules}
\textup{Irr}(\F_p\Sn)=\{D^\mu \mid \mu\  \text{is a $p$-regular partition of $n$}\}.
\end{equation}
For example, taking the lexicographic order on partitions, $\DD_{4,3}$ is
\[
\begin{array}{l?cccc}
4 & 1 \\
3,1 & \cdot & 1 \\
2^2 & 1 & \cdot & 1 \\
2, 1^2 & \cdot & \cdot & \cdot & 1 \\
1^4 & \cdot & \cdot & 1 & \cdot \\
\end{array}
\]
where the dots and ommited entries are equal to $0$. Indeed we can see that up to rearranging the rows and columns, the square submatrix formed by rows corresponding to $p$-regular partitions, has a lower unitriangular shape. The fact that the simple $\F_p\Sn$-modules can be indexed by the set of $p$-regular partitions is known before observing the form of the matrix $\mathbf{D}_{n,p}$. Indeed, there are as many classes of inequivalent simple $\F_p\Sn$-modules as conjugacy classes of $\Sn$ with order which is prime to $p$. Such conjugacy classes are, in turn, in bijection with the $p$-regular partitions of $n$. The additional information which comes from the matrix $\mathbf{D}_{n,p}$ having such a form, is that the classes of the Specht modules 
\begin{equation}\label{eq:fpsnspechtmodules}
\{S^\lambda \mid \lambda\  \text{is a $p$-regular partition of $n$}\},
\end{equation} 
form a $\mathbb{Z}$-basis of the so-called  \emph{Grothendieck group} of $\F_p\Sn$ (roughly speaking, the Grothendieck group of a finite dimensional algebra $A$ is the $\mathbb{Z}$-module generated by the isomorphism classes of the finitely generated $A$-modules, with the relation that says that two modules are equivalent iff they have the same composition factors). And the square sub-matrix of $\mathbf{D}_{n,p}$ with rows indexed by the $p$-regular partitions is precisely the transition matrix between this basis, formed by the classes of the Specht modules in (\ref{eq:fpsnspechtmodules}), and the natural basis of the Grothendieck group; namely the one formed by the classes of simple $\F_p\Sn$-modules (\ref{eq:simplefpsnmodules}). The set of $p$-regular partitions is then said to be a \emph{unitriangular basic set}, or \emph{UBS} for short, for $\F_p\Sn$. A UBS is a subset of the Specht modules, together with a total order of the rows, for which the matrix $\mathbf{D}_{n,p}$ takes such a form i.e. such a set indexes simple $\F_p\Sn$-modules, and to such a set we associate a $\mathbb{Z}$-basis of the Grothendieck group of $\F_p\Sn$. The general idea is that the Specht modules are much better known than the simple modules (dimensions, characters) and, roughly speaking, the decomposition matrix allows to pass from one the other.

Unitriangular basic sets for $\F_p\Sn$ are not unique. When studying the representation theory of the alternating group $\An$, the quest of finding a more ``suitable'' UBS arises. By classical Clifford theory, the behaviour of the restriction from $\Sn$ to $\An$ is determined by tensoring simple modules of $\Sn$ with the sign representation. The $\F_p\Sn$-module $D^\lambda \otimes \varepsilon$, where $\varepsilon$ is the sign representation of $\F_p\Sn$ and $\lambda$ is a $p$-regular partition, is simple and then it is isomorphic to $D^{\lambda^*}$, where $\lambda^*$ is some $p$-regular partition. In the characteristic zero setting ($\C$ instead of $\F_p$), or if $p>n$, the partition $\lambda^*$ is simply the \emph{conjugate} partition $\lambda'$ of $\lambda$. But for $p \leq n$, the description of $\lambda^*$ is much more difficult, $\lambda^*=\m(\lambda)$, where $\m$ is a map called the \emph{Mullineux map}. The Mullineux map is then an important involution on $p$-regular partitions corresponding with tensoring with the sign, and directly related to the representation theory of the alternating group. There are various combinatorial algorithms to find $\m(\lambda)$ (see \cite{mulli}, \cite{fordkleschev}, \cite{xu}, \cite{brundankujawa}, \cite{fayers2}), which are quite elaborate. Hence, one way in which a UBS for $\F_p\Sn$ could be more suitable is one for which tensoring with the sign corresponds with conjugation of partitions, even for $0<p<n$. Such a UBS is, in particular, stable by conjugation of partitions. In order to describe the aim of this paper let us first state more precise definitions.

\begin{definition}\label{def:UBSgroup} A \emph{unitriangular basic set} (UBS) for $\mathbb{F}_p\Sn$ is the datum of a triplet $(U,\leq,\Psi)$, where $U$ is a subset of partitions of $n$, $\leq$ is a total order defined on the set of partitions of $n$ and $\Psi$ is a bijection
\[
\Psi: U \longrightarrow \textup{Irr}(\mathbb{F}_p\Sn)
\]
satisfying:
\begin{itemize}
\item[(1)] for all $\mu \in U$, we have $d_{\mu\Psi(\mu)}=1$, and
\item[(2)] for all $D \in \textup{Irr}(\mathbb{F}_p\Sn)$ and $\lambda$ a partition of $n$, if $d_{\lambda D} \neq 0$ then $\lambda \leq \Psi^{-1}(D).$
\end{itemize}
The UBS $(U,\leq,\Psi)$ is said to be a \emph{stable unitriangular basic set} (SUBS) if, in addition
\begin{itemize}
\item[(A)] if $\mu \in U$, then $\mu' \in U$, and
\item[(B)] if $\mu=\mu'\in U$, then none of the diagonal hook-lengths of $\mu$ is divisible by $p$.
\end{itemize}
\end{definition}
\medskip

\noindent The condition (B) is a technical condition which allows the SUBS $(U,\leq,\Psi)$ to ``restrict'' to a UBS for $\mathbb{F}_p\An$, see \cite[Theorem 12]{brunatgramainjacon}. This condition also implies that the number of fixed points of conjugation in the set is equal to the number of fixed points of the Mullineux map.  Notice that the UBS of $\mathbb{F}_p\Sn$ formed by $p$-regular partitions is not a SUBS, since for example, the conjugate $\lambda'=(1^n)$ of the partition $\lambda=(n)$ is $p$-singular when $p \leq n$. The question is then: does a SUBS always exist for the symmetric group? And the answer is no. In \cite[\S 3]{brunatgramainjacon}, Brunat, Gramain and Jacon show that $\mathbb{F}_p\An$ does not always have a UBS (for example when $p=3$ and $n=18$), implying that the symmetric group cannot always have a SUBS. 

Despite this, the same question can be adressed ``blockwise''. From the modular representation theory of finite groups we know that the Specht modules and the simple modules of $\mathbb{F}_p\Sn$ fall into \emph{$p$-blocks}, which we describe precisely in \S \ref{sec:SUBSblocks}. This means in particular that the decomposition matrix is a block diagonal matrix:

\[
\mathbf{D}_{n,p} = 
\begin{pmatrix}
\begin{tikzpicture}
\draw (0,6) rectangle (0.5,6.6) ;
\draw (0.25,6.3) node {$*$};
\draw (0.5,5.4) rectangle (1,6) ;
\draw (0.75,5.7) node {$*$};
\draw (1,5.4)--(1.1,5.4);
\draw (1,5.4)--(1,5.3);
%\draw (1,4.8) rectangle (1.5,5.4) ;
\draw (1.25,5.2) node {$\ddots$};
\draw (1.5,4.8) -- (1.5,4.9);
\draw (1.5,4.8) -- (1.4,4.8);
\draw (1.5,4.2) rectangle (2,4.8) ;
\draw (1.75,4.5) node {$*$};
\draw (0.6,4.6) node {$0$};
\draw (1.5,6.1) node {$0$};
\end{tikzpicture}
\end{pmatrix}
\]
The concepts of UBS and SUBS can be adapted to have a meaning for each of these $p$-blocks, which we define precisely in \S \ref{sec:SUBSblocks}. The interest of doing this is to ask for the same properties blockwise: the submatrix corresponding to the block will have a lower unitriangular shape, this will give a natural labelling of the simple $\mathbb{F}_p\Sn$-modules in a block, for which the Mullineux map is conjugation. Another interest is to obtain a UBS for blocks of $\mathbb{F}_p\An$: if a block of $\mathbb{F}_p\Sn$ has a SUBS, then it provides by restriction, a UBS for blocks of $\mathbb{F}_p\An$. So that the question is to determine whether a block has a SUBS and if yes, to describe one.

Blocks of small weights of the symmetric group, and their decomposition matrices have been studied in detail, see for example \cite{scopes,fayers3,fayers4}. In this note, we give an explicit SUBS for any $p$-block of $\mathbb{F}_p\Sn$ of weight $2$. We use a labelling of the Specht modules in such blocks which comes from \cite{fayers} and \cite{richards}. In \cite{richards}, Richards introduces an object called a \emph{pyramid}, associated to a block. By means of the pyramid and the mentionned labels on the Specht modules, we obtain some properties on the dominance order in the blocks, which allow us to obtain a SUBS. Blocks of odd weight and pairs of conjugate blocks (of any weight) have explicitely known SUBS (\cite[\S 5.2]{brunatgramainjacon}), we explain this in detail in \S \ref{sec:oddweig}. The question for blocks of weight $0$ is automatically answered by the fact that such a block forms a SUBS by itself.

The organisation is as follows. In \S \ref{sec:2} we recall several definitions about partitions and we define (stable) unitriangular basic sets, or (S)UBS, for blocks of $\Sn$. In \S \ref{sec:4} we study self-conjugate blocks of weight $2$, that is, we study in detail combinatorial properties of partitions labelling Specht modules in these blocks which come mostly from \cite{richards}. We show what these properties imply in the dominance order on partitions in these blocks. The third part of this section contains the main result: we show how the properties stated in the first part allow to construct a SUBS for any self-conjugate $p$-block of weight $2$.

\section{Preliminaries}\label{sec:2}
	\subsection{Partitions, hooks and \texorpdfstring{$p$}{p}-cores}\label{sec:partitions}
		
In this section we recall some combinatorial objects linked with the representations of the symmetric group $\Sn$ and we set some notations. We take most of these notations from \cite{jameskerber}.

A \emph{partition} $\lambda$ is a weakly decreasing sequence $\lambda = (\lambda_1,\lambda_2,\ldots)$ of non-negative integers containing only finitely many non-zero terms. Let $n \in \mathbb{N}$ be such that $\lambda_1 + \lambda_2 + \cdots = n$. We say that $\lambda$ is a partition of $n$, which we write $\lambda \vdash n$ and $n$ is the \emph{rank} of $\lambda$. For $n \in \mathbb{N}$,  we denote by $\textup{Par}(n)$ the set of partitions of $n$. The integers $\lambda_i$ are called the \emph{parts} of the partition $\lambda$. If there is a part that repeats $k$ times, say $\lambda_i=\lambda_{i+1}=\cdots=\lambda_{i+k-1}$, we denote $\lambda$ as $(\ldots,\lambda_i^k,\ldots)$. The number of non-zero parts is the \emph{length} of $\lambda$ and is denoted $l(\lambda)$. The \emph{empty partition} is the only partition of $0$, it is denoted $\emptyset$.

The \emph{Young diagram} of a partition $\lambda$ is the set \[
[\lambda]=\{(i,j) \in \mathbb{N} \times \mathbb{N} \mid i\geq 1  \ \ \text{and} \ \ 1 \leq j \leq \lambda_i \},
\] 
whose elements are called \emph{nodes}. We represent $[\lambda]$ as an array of boxes in the plane with the convention that $i$ increases downwards and $j$ increases from left to right. A partition is often identified with its Young diagram. The Young diagram of $\lambda=(4,2^2,1)$ is 
\[
[\lambda]=\yngs(1,4,2,2,1)
\]

The \emph{conjugate} (or \emph{transpose}) of a partition  $\lambda=(\lambda_1,\ldots,\lambda_l)$ of $n$, where $l=l(\lambda)$, is the partition of $n$ denoted $\lambda'$ and defined by $\lambda'_i=\#\{j \mid 1\leq j \leq l \ \text{and}\ \lambda_j \geq i\}$. If $\lambda=(4,2^2,1)$, as above, then $\lambda'=(4,3,1^2)$. Conjugation of partitions is easily seen in the Young diagram; the Young diagram of $\lambda'$ is the reflection of the Young diagram of $\lambda$ against the main diagonal.
\[
[\lambda']=\yngs(1,4,3,1^2)
\]
A partition which is equal to its conjugate is called a \emph{self-conjugate} partition. Its Young diagram is then symmetrical with respect to the main diagonal.

For a positive integer $p$, the partition $\lambda$ is said to be \emph{$p$-regular} if it does not contain $p$ parts $\lambda_i \neq 0$ which are equal. A partition which is not $p$-regular is called \emph{$p$}-singular. The partition $\lambda=(4,2^2,1)$ above is not $2$-regular but it is $3$-regular. We denote by $\Reg{p}{n}$ the set of $p$-regular partitions of $n$.

The \emph{dominance order} ``$\trianglelefteq$'' is a partial order defined on the set of all partitions. It is defined as follows: let $\lambda$ and $\mu$ be two partitions, we say that $\lambda \trianglelefteq \mu$ if and only if
\[
\sum_{i=1}^k \lambda_i \leq \sum_{i=1}^k \mu_i \quad \text{for every }k\geq1.
\]

We write $\lambda \triangleleft \mu$ if $\lambda \lessdom \mu$ but $\lambda \neq \mu$. The \emph{lexicographic order} ``$\leq$'' is a total order defined on the set of all partitions. It is defined as follows: let $\lambda$ and $\mu$ be two partitions, we say that $\lambda \leq  \mu$ if and only if the first non-vanishing difference $\mu_i-\lambda_i$ is positive. We write $\lambda < \mu$ if $\lambda \leq \mu$ but $\lambda \neq \mu$. The lexicographic order is a refinement of the dominance order in the set $\Par n$, and they are different if $n \geq 6$.

We now recall some particular sets of nodes in the Young diagram of a partition $\lambda$. To any node $(i,j)$ of the diagram of $\lambda$ we can associate its \emph{$(i,j)$-th hook}, denoted $H^\lambda_{i,j}$, which consits of: the node $(i,j)$, the nodes to the right of it in the same row (the \emph{arm}), and the nodes lower down in the same column (the \emph{leg}). The size of $H^\lambda_{i,j}$ is the \emph{length} (or \emph{hook-length}) of the $(i,j)$-th hook, it is equal to $\lambda_i-j+\lambda_j'-i+1$. The number of nodes on the arm (resp. leg) of the hook is called the \emph{arm-length} (resp. \emph{leg-length}). The \emph{rim} of $\lambda$ is the set of nodes $\{ (i,j) \in [\lambda] \mid (i+1,j+1) \notin [\lambda]\}$, in words, it is the south-east border of $[\lambda]$. To $H^\lambda_{i,j}$ (or to $(i,j)$) we can associate a set $R^\lambda_{i,j}$ of the same size, consisting of (adjacent) nodes in the rim of $\lambda$: the upper right node of $R^\lambda_{i,j}$ is the node $(i,\lambda_i)$ and the lower left node is $(\lambda_j',j)$. The set $R^\lambda_{i,j}$ is called the $(i,j)$-th \emph{rim hook}. The hook $H^{(5,4,3,1)}_{1,2}$ is illustrated by shaded nodes and the corresponding rim-hook $R^{(5,4,3,1)}_{1,2}$ illustrated by nodes marked with ``$\times$'' in the following diagram
\[
\gyoung(;!\fg;;;\times;\times!\fw,;!\fg;!\fw;\times;\times,!\fw;!\fg;\times!\fw;\times,;)
\]
Both the hook and the rim-hook in this diagram are of length $6$, the arm-length is $3$ and the leg-length is $2$. We call \emph{$p$-rim-hook} any rim-hook of length $p$, for a positive integer $p$. 
 
Removing a rim-hook from a partition, that is, removing the nodes of a rim-hook from its Young diagram, results again in the Young diagram of a partition. Let $p$ be a positive integer, and $\lambda \vdash n$ a partition. Consider the following procedure : if $\lambda$ has a $p$-rim-hook, remove it from $\lambda$. We obtain a partition of $n-p$. If the partition obtained has a $p$-rim-hook, remove it, and we get a partition of $n-2p$. Continue removing $p$-rim-hooks in this way until obtaining a partition $\gamma_\lambda$, possibly the empty partition, for which the diagram does not have $p$-rim-hooks. Suppose that in total we removed $w$ $p$-rim-hooks in this sequence of steps. The partition obtained $\gamma_\lambda \vdash (n-wp)$ does not depend on the way that the $p$-rim-hooks were removed (\cite[Theorem 2.7.16]{jameskerber}). This partition is called the \emph{$p$-core} of $\lambda$. The number $w$ is also independent of the way they were removed and it is called the \emph{$p$-weight} of $\lambda$. The $p$-weight of $\lambda$ is also equal to the number of rim-hooks of $\lambda$ of length divisible by $p$ (\cite[2.7.40]{jameskerber}). A partition without $p$-rim-hooks is called a \emph{$p$-core}, in other words, a partition of $p$-weight $0$.

A \emph{$p$-BG-partition}, or simply a \emph{BG-partition} is a self-conjugate partition $\lambda$ such that none of the positive hook-lengths $|H^\lambda_{i,i}|$ is divisible by $p$.

We recall the abacus diagram for a partition, which was introduced by G. James in \cite{james2}.  For a positive integer $p$, a $p$-abacus is a diagram with $p$ vertical lines called \emph{runners} on which there are positions labelled by the integers as follows. If $p=3$, the positions are 

 \begin{center}
 \begin{tikzpicture}
 \draw [dotted] (0,3-.3)--(0,2.3);
 \draw [dotted] (1,3-.3)--(1,2.3);
 \draw [dotted] (2,3-.3)--(2,2.3);
 \draw (2,2) node{$-4$};
 \draw (1,2) node{$\cdots$};
 \draw (0,2) node{$\cdots$};
 \draw (0,2-.3)--(0,1.3);
 \draw (1,2-.3)--(1,1.3);
 \draw (2,2-.3)--(2,1.3);

 \draw (0,1-.3)--(0,0.3);
 \draw (1,1-.3)--(1,0.3);
 \draw (2,1-.3)--(2,0.3);
 \draw (0,1) node{$-3$};
 \draw (1,1) node{$-2$};
 \draw (2,1) node{$-1$};
  \draw (0,0-.3)--(0,-0.7);
 \draw (1,0-.3)--(1,-0.7);
 \draw (2,0-.3)--(2,-0.7);
 
  \draw (0,-1-.3)--(0,-1.7);
 \draw (1,-1-.3)--(1,-1.7);
 \draw (2,-1-.3)--(2,-1.7);
 \draw (0,0) node{$0$};
 \draw (1,0) node{$1$};
 \draw (2,0) node{$2$};
   \draw [dotted](0,-2-.3)--(0,-2.7);
 \draw [dotted](1,-2-.3)--(1,-2.7);
 \draw [dotted](2,-2-.3)--(2,-2.7);
 
 \draw (0,-1) node{$3$};
 \draw (1,-1) node{$4$};
 \draw (2,-1) node{$5$};
 
 \draw (0,-2) node{$6$};
 \draw (1,-2) node{$\cdots$};
 \draw (2,-2) node{$\cdots$};
 \end{tikzpicture}
 \end{center}
 
These positions give a natural labelling for runners: The $0$th runner is the one with positions which are congruent to $0$ mod $p$, etc. The set of \emph{beta-numbers} of $\lambda$ is the set $\{\beta_i=\lambda_i-i \mid i \geq 1\}$ (it is an infinite set of integers). The \emph{$p$-abacus} for $\lambda$ (or simply \emph{abacus}) is the abacus with \emph{beads} placed in positions $\beta_i$, for $i \geq 1$, and empty positions elsewhere. For example, if $p=5$ and $\lambda=(4,3^3,2,1,1)$, the beta-numbers for $\lambda$ are $\{3, 1 , 0 ,-1 , -3 , -5 , -6 , -8 , -9 , -10, \ldots\}$ the $5$-abacus of $\lambda$ is

\[
\begin{matrix}
\vdots & \vdots & \vdots & \vdots & \vdots\\
\bd & \bd & \bd & \bd & \bd \\[-6pt]
\bd & \bd & \bd & \pa & \bd \\[-6pt]
\bd & \pa & \bd & \pa & \bd \\[-6pt]
\bd & \bd & \pa & \bd & \pa \\[-6pt]
\pa & \pa & \pa & \pa & \pa \\[-6pt]
\vdots & \vdots & \vdots & \vdots & \vdots\\
\end{matrix}
\]

Adding $1$ to each of the beta-numbers for $\lambda$ and placing beads in an abacus in these new positions produces another abacus configuration for $\lambda$. But a given abacus configuration corresponds to a unique partition. To remove this ambiguity, we always chose an abacus configuration with a number of beads which is a ``multiple of $p$'', meaning that the (finite) number of beads, starting to count the beads from any row full of beads, with all rows above also full of beads, and all the way down on every runner, is a multiple of $p$. This way of drawing the abacus guarantees that there is a unique $p$-abacus for a partition.  

If we add $p$ nodes to the Young diagram of $\lambda$, in such a way that they form a $p$-rim-hook, the $p$-abacus of the obtained partition is obtained from the abacus of $\lambda$ by just sliding a bead one space down \cite[Lemma 2.7.13]{jameskerber}. So, adding a $p$-rim-hook is sliding a bead one space down, and conversely removing a $p$-rim-hook is sliding a bead one space up. Hence, the abacus of the $p$-core of $\lambda$ is obtained from the abacus of $\lambda$ by sliding the beads on every runner all the way up. And, conversely, any partition $\lambda$ of $p$-weight $w$ is obtained from the abacus of its $p$-core by making $w$ sliding down individual movements of beads. It is easy to see that the leg-length of the rim-hook added by sliding a bead one space down is equal to the number of beads between the initial and final positions of the moved bead.

\paragraph{Labelling of runners in the abacus} We recall a useful labelling for the runners of the abacus of a $p$-core. Let $\gamma$ be a $p$-core and consider its abacus. Take the positions of the lowest bead in each runner. This gives a list of $p$ integers $\rho_0<\rho_1<\cdots<\rho_{p-1}$. Since each of these integers corresponds to (a position on) a runner, such a list yields a labelling $0,1,\ldots,p-1$ of the runners, possibly different from the natural labelling: now the $0$-th runner is the leftmost with ``\emph{minimal number}'' of beads, the $(p-1)$-th runner is the rightmost with the ``\emph{maximal number}'' of beads. 
We can rephrase this by defining a total order on runners: Let ``$\lessdot$'' be the following order on runners. For $R$ and $S$ two runners on the abacus of $\gamma$ we say that $R \lessdot S$ iff $R$ has strictly less beads than $S$, or $R$ and $S$ have the same number of beads and $R$ is to the left of $S$. Here, by ``\emph{number of beads} in a runner'' we mean the number of beads below a certain fixed threshold, which is a row such that all rows above are full of beads.

Now, with this order, our new labelling $0,1,\ldots,p-1$ coincides with increasingly ordering the runners with respect to ``$\lessdot$''. From now on, if not otherwise specified, we use this labelling for runners on an abacus. 

\begin{example}\label{ex:w1new}
Let $p=5$ and $\gamma=(2,1)$. The Young diagram and $5$-abacus of $\gamma$, with the corresponding labelling of runners is  
\[
\begin{array}{c}
\gamma \\
\\
\gyoung(;;,;)
\end{array} 
\qquad \qquad
\begin{matrix}
1 & 4 & 2 & 0 & 3\\
\bd & \bd & \bd & \bd & \bd \\[-15pt]
\bd & \bd & \bd & \bd & \bd \\[-15pt]
\bd & \bd & \bd & \pa & \bd \\[-15pt]
\pa & \bd & \pa & \pa & \pa \\[-15pt]
\pa & \pa & \pa & \pa & \pa  \\[-15pt]
\end{matrix}
\]
\demo
\end{example}

	\subsection{Blocks and stable unitriangular basic sets for blocks}\label{sec:SUBSblocks}

This section contains definitions and notations regarding $p$-blocks of $\F_p\Sn$ and unitriangular basic sets for blocks. Before stating these definitions we make some remarks.

In \S \ref{intro} we recalled the definition of a unitriangular basic set for $\F_p\Sn$, we do not recall it here. However, we remark that in the particular case of the UBS $\Reg{p}{n}$, the total order involved is any total order refining the dominance order on partitions. Hence, condition (2) on Definition \ref{def:UBSgroup} is stronger: for all $D^\mu \in \textup{Irr}(\mathbb{F}_p\Sn)$ and $\lambda \vdash n$ we have the following:
\begin{equation}\label{eq:conditiondominance}
\text{If }\  d_{\lambda D^\mu} \neq 0\  \ \text{then}\ \  \lambda \trianglelefteq \mu.
\end{equation}
This fact will be used later.

There is a more general/weaker notion than that of a unitriangular basic set, that is a \emph{basic set}. A basic set for $\F_p\Sn$ is a subset of $\Par{n}$ in bijection with $\textup{Irr}(\F_p\Sn)$, such that the corresponding square submatrix of $\DD_{n,p}$ is invertible over $\mathbb{Z}$.

The notions of decomposition matrices, basic sets and UBSs are defined in the much more general setting of modular representation theory of finite groups, often in terms of Brauer characters. 

The group algebra $\F_p\Sn$ decomposes into a direct sum of indecomposable subalgebras, which are called the \emph{$p$-blocks} of $\F_p\Sn$ (or simply the \emph{blocks})
\[
\F_p\Sn = \Bgot_1 \oplus \Bgot_2 \oplus \cdots \oplus \Bgot_r
\]
We say that a $\F_p\Sn$-module $M$ \emph{belongs} to the block $\Bgot_i$ if $\Bgot_iM=M$ and $\Bgot_j$ annihilates $M$ for $j\neq i$ (see \cite{alperin} for the more general setting of blocks of a finite-dimensional algebra). From modular representation theory, we know that each Specht module belongs to a unique block of $\F_p\Sn$, as well as each simple module. By the Nakayama conjecture (\cite[6.1.2]{jameskerber}), the $p$-blocks of $\Sn$ are classified: two Specht modules $S^\lambda$ and $S^\mu$ belong to the same $p$-block if and only if $\lambda$ and $\mu$ have the same $p$-core. Hence, each block $\Bgot_i$ can be indexed by the corresponding $p$-core. We say that the block $\Bgot_\gamma$, where $\gamma$ is the corresponding $p$-core, is a block of $p$-weight $w$ if the partitions $\lambda$ such that $S^\lambda$ belongs to $B_\gamma$ have $p$-weight $w$. Throughout this note, by abuse of notation, $\Bgot_\gamma$ is the set of partitions $\lambda$ such that $S^\lambda$ is in $\Bgot_\gamma$.

We now define UBS and SUBS for a $p$-block.

\begin{definition}\label{def:unibl} Let $\gamma \vdash (n-wp)$ be a $p$-core. Let $\Bgot_\gamma$ be the corresponding block of $\mathbb{F}_p\Sn$. A \emph{unitriangular basic set} (\emph{UBS}) for $\Bgot_\gamma$ is the data of a triplet $(U_\gamma\ ,\ \leq\ ,\ \Psi_\gamma)$, where $U_\gamma \subseteq \Bgot_\gamma$ , $\leq$ is a total order defined on $\Bgot_\gamma$ and $\Psi$ is a bijection
\[
\Psi_\gamma: U_\gamma \longrightarrow \textup{Irr}_p(\Bgot_\gamma) \subseteq \textup{Irr}(\mathbb{F}_p\Sn)
\]
satisfying:
\begin{itemize} 
\item[(1)] for all $\mu \in U_\gamma$, we have $d_{\mu\Psi(\mu)}=1$, and
\item[(2)] for all $D \in \textup{Irr}_p(\Bgot_\gamma)$ and $\lambda \in \Bgot_\gamma$, if $d_{\lambda D}\neq 0$ then $\lambda \leq \Psi^{-1}(D).$
\end{itemize}
\end{definition}

Here, $\textup{Irr}_p(\Bgot_\gamma)$ denotes the set of simple $\F_p\Sn$-modules in the block $\Bgot_\gamma$ up to equivalence. It is clear that any UBS for $\mathbb{F}_p\Sn$ restricts to an UBS for each block. For example the set $\Reg{p}{\Bgot_\gamma}$ of $p$-regular partitions in $\Bgot_\gamma$, which is equal to $\Reg{p}{n} \cap \Bgot_\gamma$, is a UBS for $\Bgot_\gamma$.

Recall, from the introduction, that we look for a stable UBS for a block (see Definition \ref{def:UBSgroup}) of the symmetric group, let us state the precise definition of this. For a $p$-block $\Bgot_\gamma$, whose partitions have $p$-core equal to $\gamma$, we call its \emph{conjugate block}, the block $\Bgot_{\gamma'}$ consisting of the partitions of $n$ with $p$-core $\gamma'$. If $\gamma=\gamma'$, then $\Bgot_\gamma=\Bgot_{\gamma'}$ and we say that $\Bgot_\gamma$ is a \emph{self-conjugate block}.

\begin{example} The block of $\F_5\mathfrak{S}_8$ associated to the $5$-core $\tau=(3)$ is
\[
\Bgot_\tau=\{(8),\ (4^2),\ (3^2, 1^2),\ (3, 2, 1^3),\ (3, 1^5)\}.
\]
Its conjugate block is the block $\Bgot_{\tau'}$ associated to the $5$-core $\tau'=(1^3)$, whose partitions are all the conjugates of partitions in $\Bgot_\tau$:
\[
\Bgot_{\tau'}=\{(6, 1^2),\ (5, 2, 1),\ (4, 2^2),\ (2^4),\ (1^8)\}.
\]
The block of $\F_5\mathfrak{S}_8$ associated to the $5$-core $\gamma=(2,1)$ is a self-conjugate block since $\gamma=\gamma'$.
\[
\Bgot_{\gamma}=\{(7, 1),\ (5, 3),\ (3^2, 2),\ (2^3, 1^2),\ (2, 1^6)\}.
\]
\demo
\end{example}

\begin{definition}\label{def:SUBSblock} Let $\gamma \vdash (n-wp)$ be a $p$-core. Let $\Bgot_\gamma$ be the corresponding block of $\mathbb{F}_p\Sn$, and $\Bgot_{\gamma'}$ its conjugated block. And suppose that $U_\gamma$ and $U_{\gamma'}$ are UBSs for $\Bgot_{\gamma}$ and $\Bgot_{\gamma'}$, respectively. Let $U=U_{\gamma} \cup U_{\gamma'}$. The set $U$ is a \emph{stable unitriangular basic set} (\emph{SUBS}) for $(\Bgot_\gamma,\Bgot_{\gamma'})$ if the following holds:
\begin{itemize}
\item[(A)] If $\lambda \in U$, then $\lambda' \in U$, and
\item[(B)] if $\lambda=\lambda'\in U$, then $\lambda$ is a $p$-BG-partition. That is, none of the diagonal hook-lengths $|H^\lambda_{i,i}|$ of $\lambda$ is divisible by $p$.
\end{itemize}
If $\Bgot_\gamma$ is self-conjugate and (A) and (B) hold for $U=U_\gamma$, we say that $U=U_\gamma$ is a SUBS for $\Bgot_\gamma$.
\end{definition}

Having a SUBS for $(\Bgot_\gamma,\Bgot_{\gamma'})$ implies that the sub-matrix of $\DD_{n,p}$ formed by the rows indexed by the set $U$ is lower unitriangular. We denote by $\DD_\gamma$ the (block-)sub-matrix of $\DD_{n,p}$ corresponding to the rows indexed by $\Bgot_\gamma$.

\subsubsection{SUBS for blocks of odd weight or not self-conjugate}\label{sec:oddweig}

As said in the introduction, given that there is not always a SUBS for $\F_p\Sn$, it is interesting to have a SUBS for some blocks. By the following proposition, which has been adapted to our notation, this question is already solved for blocks of odd weight and blocks which are not self-conjugate. 

\begin{proposition}[\text{\cite[Theorem 38]{brunatgramainjacon}}]\label{prop:SUBSodd} Let $\gamma \vdash (n-wp)$ be a $p$-core. Let $\Bgot_\gamma$ be the corresponding block of $\mathbb{F}_p\Sn$. Consider the following subset of $\Bgot_\gamma \cup \Bgot_{\gamma'}$ 
\[
U=\{\lambda \in \Reg{p}{\Bgot_\gamma} \mid \m(\lambda)<\lambda\}\ \ \sqcup \  \{\lambda' \mid \lambda \in \Reg{p}{\Bgot_\gamma} \ \text{and} \  \m(\lambda)<\lambda\} \ \sqcup \ \{\lambda \in \Reg{p}{\Bgot_\gamma} \mid \m(\lambda)=\lambda\},
\]
where $\m$ denotes the Mullineux map. If $w$ is odd or if $\gamma$ is not self-conjugate, then there exist UBSs $(U_\gamma\ ,\ \preceq\ ,\ \Psi_\gamma)$ and $(U_{\gamma'}\ ,\ \preceq\ ,\ \Psi_{\gamma'})$, for $B_\gamma$ and $B_{\gamma'}$, respectively such that $U=U_\gamma \cup U_{\gamma'}$ is a SUBS for $(\Bgot_\gamma,\Bgot_{\gamma'})$.
\end{proposition}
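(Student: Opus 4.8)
The plan is to keep the standard unitriangular basic set $\Reg{p}{\Bgot_\gamma}$ (and $\Reg{p}{\Bgot_{\gamma'}}$), for which condition (\ref{eq:conditiondominance}) already holds, and to change only the \emph{labelling} so as to make it conjugation-stable. The engine is the sign twist: from $S^{\lambda'}\cong (S^\lambda)^{\ast}\otimes\varepsilon$, the self-duality of the $D^\mu$, and $D^\mu\otimes\varepsilon\cong D^{\m(\mu)}$, one gets the identity of decomposition numbers $d_{\lambda D^\mu}=d_{\lambda' D^{\m(\mu)}}$, and $\m$ restricts to a bijection $\Reg{p}{\Bgot_\gamma}\to\Reg{p}{\Bgot_{\gamma'}}$. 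As a combinatorial backbone I would first record the known dominance relation $\m(\mu)\trianglerighteq\mu'$ for every $p$-regular $\mu$; its immediate consequence is that whenever one member of a Mullineux pair $\{\mu,\m(\mu)\}$ is self-conjugate, it must be the lexicographically \emph{smaller} one.

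Next I would split $\Reg{p}{\Bgot_\gamma}\cup\Reg{p}{\Bgot_{\gamma'}}$ into $\m$-orbits and treat the two-element orbits. For a pair with $\m(\mu)<\mu$ I keep the label $\mu$ for $D^\mu$ (its column is governed by $\lambda\trianglelefteq\mu$) and I relabel $D^{\m(\mu)}$ by the conjugate $\mu'$. This is legitimate because $d_{\mu' D^{\m(\mu)}}=d_{\mu D^\mu}=1$, while $d_{\lambda D^{\m(\mu)}}=d_{\lambda' D^\mu}\neq0$ forces $\lambda'\trianglelefteq\mu$, i.e.\ $\lambda\trianglerighteq\mu'$, so the whole column of the relabelled simple sits dominance-\emph{above} its new label. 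By the backbone relation the larger element $\mu$ of the pair is never self-conjugate, so each pair contributes two genuinely distinct labels $\{\mu,\mu'\}$ interchanged by conjugation. Collecting these over all pairs produces exactly the set $U$ of the statement, with $U_\gamma,U_{\gamma'}$ its intersections with the two blocks; it is manifestly closed under conjugation, giving (A), and (B) is vacuous since no label among these is self-conjugate.

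The decisive point is then to build a single total order $\preceq$ on each block that makes \emph{every} column extremal at its label: the kept columns want $\preceq$ to refine $\trianglelefteq$ near $\mu$, while the relabelled columns want the opposite near $\mu'$. I would obtain $\preceq$ by extending the union of these ``the label is $\preceq$-maximal in its column'' constraints to a total order, the real work being to verify that the constraints are acyclic; I expect this to be the main obstacle. What makes it work is the \emph{absence of $\m$-fixed points}, which is exactly what the hypotheses guarantee. If $\gamma\neq\gamma'$, an $\m$-fixed $\mu\in\Bgot_\gamma$ would satisfy $\mu=\m(\mu)\in\Bgot_{\gamma'}$, forcing $\Bgot_\gamma\cap\Bgot_{\gamma'}\neq\emptyset$, which is impossible; so there are no fixed points and, in fact, no self-conjugate partitions at all in the block. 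If $\gamma=\gamma'$ but $w$ is odd, I would combine the Bessenrodt--Olsson count (the number of $\m$-fixed $p$-regular partitions in the block equals the number of $p$-BG-partitions in it) with the parity observation that a $p$-BG-partition has even weight: since the $p$-weight equals the number of cells with hook length divisible by $p$ (\cite[2.7.40]{jameskerber}), and in a self-conjugate partition these cells pair off the diagonal, the weight is twice the off-diagonal count plus the number of diagonal hooks divisible by $p$, the latter being $0$ for a $p$-BG-partition. Hence an odd-weight self-conjugate block has no $p$-BG-partition and so no $\m$-fixed partition.

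Finally I would assemble: in both admissible cases all $\m$-orbits are transpositions, so the pair construction yields bijections $\Psi_\gamma,\Psi_{\gamma'}$ onto $\textup{Irr}_p(\Bgot_\gamma),\textup{Irr}_p(\Bgot_{\gamma'})$ with $d_{\mu\,\Psi(\mu)}=1$, the orders $\preceq$ above make $(U_\gamma,\preceq,\Psi_\gamma)$ and $(U_{\gamma'},\preceq,\Psi_{\gamma'})$ into UBSs, and $U=U_\gamma\cup U_{\gamma'}$ satisfies (A) and (B), hence is a SUBS. The hardest step remains the order construction; the case excluded by the hypotheses (self-conjugate block of even weight) is precisely the one in which $\m$-fixed points, i.e.\ self-conjugate $p$-BG-partitions, survive as labels and can destroy the acyclicity, consistently with the known failure of SUBS to exist in general.
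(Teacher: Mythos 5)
Your overall strategy coincides with the one underlying the paper's treatment of this proposition: form $U$ from the Mullineux pairs, label $D^\mu$ by $\mu$ when $\m(\mu)<\mu$ and relabel $D^{\m(\mu)}$ by $\mu'$, and reduce the stability conditions (A) and (B) to the absence of Mullineux-fixed $p$-regular partitions in the blocks under consideration. That reduction you handle correctly and essentially as the paper does: for $\gamma\neq\gamma'$ via the fact that $\m$ conjugates the $p$-core, and for $\gamma=\gamma'$ with $w$ odd via the blockwise equality $|\mathcal{M}_p^n(\gamma)|=|\textup{BG}_p^n(\gamma)|$ together with the fact that a $p$-BG-partition has even $p$-weight. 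Your argument for the latter (off-diagonal hooks of a self-conjugate partition with length divisible by $p$ pair up, and a BG-partition has no such diagonal hooks) is a correct and somewhat more elementary substitute for the $p$-quotient computation in the proof of Lemma \ref{lem:evenweight}. Your observation that $\m(\mu)\trianglerighteq\mu'$ forces every label in $U^1\sqcup U^2$ to be non-self-conjugate, so that (B) is vacuous once $U^3=\emptyset$, is also correct.

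The genuine gap is the total order $\preceq$. You rightly note that the column of $D^{\m(\mu)}$ is supported on $\{\lambda\mid\mu'\trianglelefteq\lambda\trianglelefteq\m(\mu)\}$, so its new label $\mu'$ is the dominance-\emph{minimum} of its column, whereas the kept labels must be $\preceq$-maxima of theirs; you then propose to extend the family of constraints ``each label is $\preceq$-maximal in its column'' to a total order and flag the acyclicity check as the main obstacle, without carrying it out. That check is not a formality: a partition $\lambda$ in the support of $D^{\m(\mu)}$ with $\mu'\triangleleft\lambda$ can itself be a label of $U$, and nothing in your argument rules out a cycle among the resulting constraints. Exhibiting a concrete total order on $\Par{n}$ that satisfies all of them simultaneously is precisely the technical content of \cite[Proposition 33]{brunatgramainjacon}, which the paper imports (together with the bijection $\Psi$) rather than reproves, and then only verifies the blockwise conditions (A) and (B). As written, your proposal establishes condition (1) of Definition \ref{def:unibl} and properties (A) and (B), but not condition (2); the proof is incomplete until the order is actually produced or the acyclicity is verified.
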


We make some comments on this proposition. In \cite[Proposition 33]{brunatgramainjacon}, Brunat, Gramain and Jacon show that the set 
\begin{equation}\label{eq:UBSpn}
U_{p,n}=\{\lambda \in \Reg{p}{n} \mid \m(\lambda)<\lambda\}\ \ \sqcup \  \{\lambda' \mid \lambda \in \Reg{p}{n} \ \text{and} \  \m(\lambda)<\lambda\} \ \sqcup \ \{\lambda \in \Reg{p}{n} \mid \m(\lambda)=\lambda\},
\end{equation}
together with some particular total order on $\Par{n}$, and a bijection $\Psi: U_{p,n} \rightarrow \textup{Irr}(\F_p\Sn)$, is a UBS for $\F_p\Sn$. The bijection $\Psi$ is defined as follows. Write 
\[
U_{p,n}=U_{p,n}^1\  \sqcup\ U_{p,n}^2 \ \sqcup\  U_{p,n}^3\ ,
\] 
where $U_{p,n}^1$, $U_{p,n}^2$ and  $U_{p,n}^3$ correspond to the three subsets in (\ref{eq:UBSpn}) above. Then $\Psi$ is defined as
\[
\begin{array}{cccl}
\Psi: & U_{p,n} & \longrightarrow & \textup{Irr}(\F_p\Sn) \\
\\
& \lambda & \longmapsto & 
\begin{cases*}
D^\lambda & if $\lambda \in U_{p,n}^1\ \cup \ U_{p,n}^3$,\\
D^{\m(\lambda')} & if $\lambda \in U_{p,n}^2$.\\
\end{cases*}
\end{array}
\]

In general, $U_{p,n}$ is not a SUBS for $\F_p\Sn$, since it contains the self-Mullineux partitions which could fail to verify properties (A) or (B) in Definition \ref{def:UBSgroup}. However, since the restriction of $U_{p,n}$ to each block results into a UBS, we can still wonder for which blocks such restriction results in a SUBS for the block. Proposition \ref{prop:SUBSodd} answers to this by giving a sufficient condition.

The reason why the restriction of the UBS $U_{p,n}$ to blocks of odd weight or blocks which are not self-conjugate gives a SUBS is that, for such blocks, the restriction of $U_{p,n}^3$ to the block is empty: the block does not contain self-Mullineux partitions. Let us see why. 

First for blocks which are not self-conjugate: from \cite{mullineux2}, we know that the $p$-core of a partition $\lambda$ and that of its image $\m(\lambda)$ under the Mullineux map, are conjugates. This says in particular that self-Mullineux partitions only occur in self-conjugate blocks. Then, if $\Bgot_\gamma$ is not a self-conjugate block, the set $U_{p,n}\cap(\Bgot_\gamma\cup \Bgot_{\gamma'})$ is a SUBS, by restricting the total order on $\Par{n}$. Indeed, this restriction does not contain any self-Mullineux partitions (any partition from  $U_{p,n}^3$), or any self-conjugate partition, and then by construction of $U_{p,n}$, it satisfies conditions (A) and (B) in Definition \ref{def:SUBSblock}.

Now, the following lemma shows why self-conjugate blocks of odd weight do not have any self-Mullineux partitions. It also contains a known fact about the number of self-Mullineux partitions in a block of even weight. A combinatorial proof of this fact can be found in \cite[Theorem 3.5]{bessenrodtolsson2}.

\begin{lemma}\label{lem:evenweight} Let $\gamma \vdash (n-pw)$ be a self-conjugate $p$-core. Let $\Bgot_\gamma$ be the corresponding block of $\F_p\Sn$. Then, the set of self-Mullineux partitions in $\Bgot_\gamma$ is non-empty if and only if $w$ is even. In this case, there are as many self-Mullineux partitions as $(\frac{p-1}{2})$-multipartitions of rank $\frac{w}{2}$.
\end{lemma}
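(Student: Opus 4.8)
The plan is to recast the problem in terms of the abacus and the $p$-quotient, and to replace self-Mullineux partitions by the self-conjugate partitions that are easier to count. Recall the $p$-core/$p$-quotient correspondence: using the natural labelling of runners (runner $j$ carrying the positions $\equiv j \pmod p$), the partitions $\lambda \in \Bgot_\gamma$ are in bijection with $p$-multipartitions $(\lambda^{(0)},\ldots,\lambda^{(p-1)})$ of total rank $w$, where $\lambda^{(j)}$ records the beads slid down on runner $j$. Under this correspondence conjugation reverses the runners, $(\lambda')^{(j)}=(\lambda^{(p-1-j)})'$, so the middle runner $j=\frac{p-1}{2}$ (which exists since $p$ is odd) is fixed. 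Hence $\lambda$ is self-conjugate if and only if $\lambda^{(j)}=(\lambda^{(p-1-j)})'$ for all $j$; equivalently the components $\lambda^{(0)},\ldots,\lambda^{(\frac{p-3}{2})}$ are arbitrary, the middle component $\lambda^{(\frac{p-1}{2})}$ is self-conjugate, and the remaining components are then forced. The first step is to invoke the Bessenrodt--Olsson bijection between self-Mullineux partitions and $p$-BG-partitions, which preserves the $p$-core (hence the block); this reduces the statement to counting the $p$-BG-partitions in $\Bgot_\gamma$.

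The key step is to translate the defining condition of a $p$-BG-partition --- that no diagonal hook-length $|H^\lambda_{i,i}|$ be divisible by $p$ --- into the abacus. Fixing a number of beads $N\equiv 0 \pmod p$, the diagonal hooks of a self-conjugate $\lambda$ are in bijection with the beads lying beyond the centre of symmetry $\frac{N-1}{2}$: a bead in position $x$ past the centre yields the diagonal hook of length $2x-N+1 \equiv 2x+1 \pmod p$, which is divisible by $p$ exactly when $x\equiv \frac{p-1}{2}\pmod p$, that is, exactly when the bead lies on the middle runner. Since $\gamma$ is a $p$-core it has no bead past the centre on the middle runner, and for self-conjugate $\lambda$ the middle component $\lambda^{(\frac{p-1}{2})}$ is itself self-conjugate; hence $\lambda^{(\frac{p-1}{2})}\neq\emptyset$ forces a bead of $\lambda$ past the centre on the middle runner, and conversely. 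Therefore $\lambda$ is a $p$-BG-partition if and only if $\lambda$ is self-conjugate and $\lambda^{(\frac{p-1}{2})}=\emptyset$.

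Combining the two steps finishes the argument. A self-conjugate $\lambda\in\Bgot_\gamma$ with empty middle component has weight $w=2\sum_{j=0}^{(p-3)/2}|\lambda^{(j)}|$, which is even; so if $w$ is odd there are no $p$-BG-partitions, hence no self-Mullineux partitions, in $\Bgot_\gamma$. If $w$ is even, such a $\lambda$ is determined freely and bijectively by the tuple $(\lambda^{(0)},\ldots,\lambda^{(\frac{p-3}{2})})$, that is, by a $(\frac{p-1}{2})$-multipartition of rank $\frac{w}{2}$; since at least one such multipartition exists, $\Bgot_\gamma$ does contain self-Mullineux partitions, and their number equals the number of $(\frac{p-1}{2})$-multipartitions of rank $\frac{w}{2}$, as claimed.

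I expect the main obstacle to be the bookkeeping in the second step: matching the $p$-divisible diagonal hooks precisely with the beads on the middle runner requires care with the choice of bead number $N\equiv 0 \pmod p$, the position of the centre $\frac{N-1}{2}$, and the convention relating runners to quotient components (and reconciling the natural labelling used here with the paper's $\lessdot$-labelling). The other point requiring verification is that the reduction in the first step is genuinely blockwise, i.e.\ that the Bessenrodt--Olsson correspondence preserves the $p$-core; granting this, the even-weight count obtained above agrees with \cite[Theorem 3.5]{bessenrodtolsson2}.
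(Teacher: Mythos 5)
Your proposal follows essentially the same route as the paper's proof: reduce to counting the $p$-BG-partitions in the block via the blockwise equality $|\mathcal{M}_p^n(\gamma)|=|\textup{BG}_p^n(\gamma)|$ (for which the paper cites \cite[Proposition 6.1]{brunatgramain} --- the cleaner reference here, since only an equality of cardinalities per block is needed rather than an explicit core-preserving bijection), then characterise the $p$-BG-partitions as the self-conjugate partitions whose $p$-quotient has empty middle component, and count these by their first $\frac{p-1}{2}$ quotient components. The only substantive difference is that you derive that characterisation directly from the abacus (your mod-$p$ computation of the diagonal hook lengths is correct), whereas the paper simply quotes it from \cite{brunatgramain}.
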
 

\begin{proof}
Denote by $\mathcal{M}_p^n(\gamma)$ and by $\text{BG}_p^n(\gamma)$ the set of self-Mullineux partitions and $p$-BG-partitions in $\Bgot_\gamma$, respectively. By \cite[Proposition 6.1]{brunatgramain} we know that, $|\mathcal{M}_p^n(\gamma)|=|\text{BG}_p^n(\gamma)|$.

In a fixed block, any partition is completely determined by its \emph{$p$-quotient}. For a definition of $p$-quotient see \cite[2.7.30]{jameskerber}. The $p$-quotient of a partition $\nu$ is a $p$-tuple of partitions $(\nu^{(1)},\nu^{(2)},\ldots,\nu^{(p)})$, or a \emph{$p$-multipartition} for which the sum of their ranks is $w$, the $p$-weight of $\nu$. In this case $w$ is the rank of this multipartition.

From \cite{brunatgramain}, choosing an appropriate convention, the $p$-quotient $(\nu^{(1)},\nu^{(2)},\ldots,\nu^{(p)})$ of $\nu \in \text{BG}_p^n(\gamma)$ is of the form
\[
(\nu^{(1)}, \nu^{(2)},\ldots, \nu^{(\frac{p-1}{2})}, \emptyset , \nu^{(\frac{p-1}{2})}{}',\ldots, \nu^{(2)}{}',\nu^{(1)}{}' ),
\] 
where $\nu^{(i)}{}'$ is the conjugate partition of $\nu^{(i)}$. Hence the rank, $w$, of this $p$-quotient is even: it is twice the sum of the ranks of $\nu^{(1)}, \nu^{(2)},\ldots, \nu^{(\frac{p-1}{2})}$. Also, this $p$-quotient is completely determined by these $\frac{p-1}{2}$ partitions. Thus, $|\mathcal{M}_p^n(\gamma)|$ is equal to the number of $(\frac{p-1}{2})$-multipartitions of $\frac{w}{2}$.

Conversely, if $w$ is even, each $(\frac{p-1}{2})$-multipartition of $\frac{w}{2}$ determines a unique $p$-quotient 
\[
(\nu^{(1)}, \nu^{(2)},\ldots, \nu^{(\frac{p-1}{2})}, \emptyset , \nu^{(\frac{p-1}{2})}{}',\ldots, \nu^{(2)}{}',\nu^{(1)}{}' ),
\] 
which corresponds to a $p$-BG-partition in the block $\Bgot_\gamma$.
\end{proof}

\section{SUBS for blocks of weight \texorpdfstring{$2$}{2}}\label{sec:4}

Throughout this section $\gamma \vdash (n-2p)$ is a self-conjugate $p$-core (unless otherwise specified) and $\Bgot_\gamma$ is the corresponding block of $\F_p\Sn$ of weight $2$. 

\subsection{Some combinatorics and notations for partitions in blocks of weight \texorpdfstring{$2$}{2}}

In \cite{richards}, Richards studied blocks of weight $2$, the combinatorics of partitions in such blocks and he gave a complete description of the decomposition numbers. In this section we recall some of his definitions and results which we use. Within his definitions there is an object associated to a block called pyramid, which will be the main object in this section. We also use notations from \cite{fayers}, in which Fayers presents an efficient way to label simple modules.

Consider the abacus display for $\gamma$. Recall, from the last paragraph in \S \ref{sec:partitions}, that positions of the lower beads are $\rho_0, \rho_1, \ldots, \rho_{p-1}$.   The \emph{pyramid} of $\gamma$ is a triangular array $(\li i\gamma_{j})$ of $0$s and $1$s, defined as follows: for $0\leq i\leq j \leq p-1$ let
 
 \[
 \li{i}\gamma_j = 
 \begin{cases*}
 1 & if $\rho_j-\rho_i<p$, \\
 0 & if $\rho_j-\rho_i>p$.
 \end{cases*}
 \]
 We organise these numbers in a diagram as follows
\begin{equation}\label{eq:pyramidgeneral}
 \begin{array}{ccccccccccccc}
 \text{\underline{row}} &\phantom{00000}\\
 p-1&\qquad&&&&&&&&\li 0 \gamma_{p-1} \\
 \\
 \vdots&&&&&&&&&\vdots \\
 \\
 \\
  2&&&&& \li 0 \gamma_2 && \li 1 \gamma_3 &&\cdots& \li{p-3} \gamma_{p-1} \\
 1&&&& \li 0 \gamma_1 & & \li 1 \gamma_2 &&\li 2 \gamma_3&\cdots&&\li{p-2} \gamma_{p-1}&\\
  0&&&\li 0 \gamma_0 & & \li 1 \gamma_1 &&\li 2 \gamma_2 && \cdots&& & \li{p-1} \gamma_{p-1}
 \end{array}
 \end{equation}
 
 For short, we write $\li i 0_j$ if $\li i \gamma_j=0$ and $\li i 1_j$ if $\li i \gamma_j=1$. The definition of the pyramid can be extended for convenience by allowing $i$ and $j$ to be any integers: if $i>j$, then $\li i \gamma_j=1.$ Otherwise, if $i<0$ or $j \geq p$ we define $\li i \gamma_j=0.$ In the pyramid diagram, that means that the outside upper left and upper right is filled with $0$s and the outside lower part is filled with $1$s. 
 For $0\leq k \leq p-1$, we call the \emph{$k$-th row} of the pyramid is the set of entries $\li i\gamma_j$ such that $j-i=k$. The \emph{apex} of the pyramid is the entry $\li 0 \gamma_{p-1}$.

 \begin{example}\label{ex:5core} Let $p=5$ and the $5$-core $\gamma=(2,2)$. The $5$-abacus for $\gamma$ is 
 \[
\begin{matrix}
3 & 4 & 2 & 0 & 1\\
\vdots & \vdots & \vdots & \vdots & \vdots\\
\bd & \bd & \bd & \bd & \bd \\[-6pt]
\bd & \bd & \bd & \pa & \pa \\[-6pt]
\bd & \bd & \pa & \pa & \pa \\[-6pt]
\pa & \pa & \pa & \pa & \pa \\[-6pt]
\vdots & \vdots & \vdots & \vdots & \vdots\\
\end{matrix}
\]
The positions of the lower beads in each runner are $11, 10, 7, 3,$ and $4$. Then, organised increasingly, $(p_0,p_1,p_2,p_3,p_4)=(3,4,7,10,11)$, so that the labelling of the runners is $3,4,2,0,1$ from left to right. The pyramid for $\gamma$ is 
\[
\begin{array}{ccccccccc} 
&&&&\li 0 0_4\\
&&&\li 0 0_3 && \li 1 0_4 \\
&& \li 0 1_2 && \li 1 0_3 && \li 2 1_4 \\
& \li 0 1_1&  & \li 1 1_2 &  & \li 2 1_3 &&\li 3 1_4 &\\
\li 0 1_0 & & \li 1 1_1 & & \li 2 1_2 & &\li 3 1_3 & &\li 4 1_4 
\end{array}
\]

\demo
 \end{example}
 
 The definition of the pyramid implies that if an entry is $1$, then the two entries just below are $1$ as well. Hence, below a $1$ there is a whole triangle or pyramid of $1$s. Hence if an entry is $0$, there are only $0$s above it.
 
 \begin{remark*}
 Pyramids are originally defined for $p$-blocks ($p$-cores) of any weight. Two different pyramids correspond to different $p$-cores. But two different $p$-cores can have the same pyramid. Two blocks have the same pyramid only if they are \emph{Scopes}-equivalent: there is a bijection between partitions in two Scopes-equivalent blocks such that the decomposition matrices in such blocks are equal. So that pyramids index Scopes-equivalence classes of blocks. See \cite[\S 3]{richards}.
 \end{remark*}
 
 \subsubsection{Notation for partitions in blocks of weight 2}\label{sec:notationw2} 

 We recall a notation introduced by Fayers \cite{fayers}, for partitions in $\Bgot_\gamma$. Since $\Bgot_\gamma$ is of weight $2$, a partition $\lambda$ in $\Bgot_\gamma$ is obtained from the abacus of $\gamma$ by moving twice a bead down one position. This can be done in three different ways
 \begin{itemize}
 \item The lowest bead on each of the runners $i$ and $j$, with $i<j$, is moved one position down. In which case $\lambda$ is denoted $\lr{i,j}$.
 \item The lowest bead on runner $i$ is moved down two positions. In this case $\lambda$ is denoted $\lr{i}$.
 \item The lowest bead on runner $i$ is moved down one position and the next bead above it is moved down one position. In this case $\lambda$ is denoted $\lr{i^2}$.
 \end{itemize}
 
So that every partition in $\Bgot_\gamma$ corresponds to exactly one of $\lr{i,j}$ for $0\leq i<j \leq p-1$ or $\lr i$ or $\lr{i^2}$ for $0\leq i \leq p-1$.  We refer to this notation as $\lr{\cdot}$ notation for partitions in $\Bgot_\gamma$. This shows that there is a total of $\binom{p}{2} +2p=\frac{p(p+3)}{2}$ partitions in $\Bgot_\gamma$.

\begin{remark*} The $\lr{\cdot}$ notation is also well defined in blocks which are not self-conjugate. \end{remark*}

\subsubsection{Conjugation in the abacus and notation for self-conjugate partitions}

In order to characterize self-conjugate partitions in $\Bgot_\gamma$ in terms of $\lr{\cdot}$ notation, let us study the form of the abacus of $\gamma$.

Let us refer to the following transformation of a runner as \emph{reversing}: reflecting the runner with respect to a horizontal axis (turning it upside down), and transforming the beads into empty spaces and viceversa.

Since $p$ is odd, any $p$-abacus has a runner which is in the middle, we refer to this runner as the \emph{middle runner}. For any other runner, we can associate what we call its \emph{opposite runner} which is the different runner equally spaced to the middle runner.

Now, conjugation of partitions can be done in the abacus in two steps: first we switch each runner with its opposite runner, and then we reverse all runners simultaneously (with respect to to a same horizontal axis). Thus, the abacus of a self-conjugate $p$-core is such that the middle runner is its own reverse and opposite runners are mutual reverses. For example, the abacus of the self-conjugate $5$-core $\gamma=(6,5,3,2^2,1)$ is 

 \[
\begin{matrix}
4 & 1 & 2 & 3 & 0\\
\bd & \bd & \bd & \bd & \bd\\[-8pt]
\bd & \bd & \bd & \bd &  \pa \\[-8pt]
\bd & \pa & \bd & \bd & \pa  \\[-8pt]
\bd & \pa & \pa & \bd & \pa \\[-8pt]
\bd & \pa & \pa & \pa & \pa \\[-8pt]
\pa & \pa & \pa & \pa & \pa \\[-8pt]
\end{matrix}
\]

The labelling of runners in the abacus defined in \S \ref{sec:partitions} has the property that the labels of two runners opposite to each other, add up to $p-1$. Let us see why. Recall the total order $\lessdot$ defined on the set of runners of the abacus, in \S \ref{sec:partitions}. It is easy to see, following the discussion in the previous paragraph, that conjugation reverses this order, that is, if $R$ and $S$ are two runners of the abacus of a $p$-core such that $R \lessdot S$, and $R'$ and $S'$ are their images under conjugations, then $S' \lessdot R'$. So if a runner $R$ has label $r$, then its image $R'$ under conjugation, has label $p-1-r$. In the abacus of a self-conjugate $p$-core, this means that the labels $(r_0,r_1, \ldots, r_{p-1})$ must satisfy $r_i=p-1-r_{p-1-i}$. See the example above.

Now, observe that any self-conjugate partition in $\Bgot_\gamma$ is obtained from the abacus of $\gamma$ by sliding down two beads, each one in a runner opposite to the other: first, we are saying that such a partition belongs to the first kind of partitions described in \S \ref{sec:notationw2}. Indeed, being self-conjugate, the middle runner is equal to its reverse. A runner on which at most two beads have been moved and which is equal to its reverse is of one of the following forms

 \[
\begin{matrix}
\vdots \\
\bd \\[-8pt]
\bd \\[-8pt]
\bd \\[-8pt]
\pa \\[-8pt]
\pa \\[-8pt]
\pa \\[-8pt]
\pa \\[-8pt]
\vdots
\end{matrix}
\qquad
\text{or}
\qquad
\begin{matrix}
\vdots \\
\bd \\[-8pt]
\bd \\[-8pt]
\bd \\[-8pt]
\pa \\[-8pt]
\bd \\[-8pt]
\pa \\[-8pt]
\pa \\[-8pt]
\vdots
\end{matrix}
\qquad
\text{or}
\qquad
\begin{matrix}
\vdots \\
\bd \\[-8pt]
\bd \\[-8pt]
\pa \\[-8pt]
\bd \\[-8pt]
\pa \\[-8pt]
\bd \\[-8pt]
\pa \\[-8pt]
\vdots
\end{matrix}
\]
but the third option is not possible since it requires more than two movements. The second option requires only one bead movement, but that means that the second (and last) movement occurs in a runner different from the middle runner, which would then not be equal to the reversed opposite runner, since its opposite runner has all of its beads all the way up. This leaves the first option as the only possibility. Thus, none of the two bead movement are done in the middle runner; they are done in a pair of opposite runners. Such a partition is then written $\lr{i,j}$ in the $\lr{\cdot}$ notation, and more precisely, we have:

\begin{lemma}\label{lem:labelselfc}
The self-conjugate partitions in $\Bgot_\gamma$ are exactly
\[
\nu_k=\Lr{\frac{p-1}{2}-k,\frac{p-1}{2}+k} \quad \text{for}\quad 1\leq k\leq \frac{p-1}{2}.
\]
\end{lemma}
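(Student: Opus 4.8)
The plan is to prove the two inclusions separately, relying on the analysis of the abacus of $\gamma$ carried out just above the statement. For the \emph{forward} inclusion, I would take an arbitrary self-conjugate $\lambda\in\Bgot_\gamma$. The preceding discussion already shows that $\lambda$ is obtained from the abacus of $\gamma$ by sliding the lowest bead down one space on each of two \emph{opposite} runners, neither of which is the middle runner; in particular $\lambda=\lr{i,j}$ for a pair of opposite runners with labels $i<j$. Since the labelling of \S\ref{sec:partitions} assigns to opposite runners labels summing to $p-1$, this forces $i+j=p-1$. Setting $k=j-\frac{p-1}{2}=\frac{p-1}{2}-i$ gives $\{i,j\}=\{\frac{p-1}{2}-k,\frac{p-1}{2}+k\}$, where $k\ge 1$ since $j>\frac{p-1}{2}$ and $k\le\frac{p-1}{2}$ since $i\ge 0$. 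Hence $\lambda=\nu_k$ for some $1\le k\le\frac{p-1}{2}$, and distinct values of $k$ give distinct runner pairs, hence distinct partitions.

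For the \emph{converse} inclusion it remains to check that each $\nu_k=\Lr{\frac{p-1}{2}-k,\frac{p-1}{2}+k}$ is genuinely self-conjugate. Write $i=\frac{p-1}{2}-k$, so that $j=p-1-i$ labels the runner opposite to $i$, and denote by $R_i,R_j$ the corresponding runners of $\gamma$. I would compute $\nu_k'$ directly from the two-step description of conjugation on the abacus (swap each runner with its opposite, then reverse every runner), using that $\gamma$ self-conjugate means its opposite runners are mutual reverses, so $\mathrm{rev}(R_j)=R_i$, and its middle runner is its own reverse. In passing from $\gamma$ to $\nu_k$ only $R_i$ and $R_j$ are altered, each by the operation $\delta$ of ``moving the lowest bead down one''. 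Under conjugation the unmodified runners reproduce their $\gamma$-configurations, so self-conjugacy of $\nu_k$ reduces to a single identity: conjugation delivers to the slot of $R_i$ the runner $\mathrm{rev}(\delta(R_j))$, and since $\mathrm{rev}(R_j)=R_i$ this equals the desired $\delta(R_i)$ exactly when $\mathrm{rev}$ and $\delta$ commute on $R_j$ (and symmetrically on $R_i$).

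The hard part, and the only genuine computation, is this commutation $\mathrm{rev}\circ\delta=\delta\circ\mathrm{rev}$. The crucial feature is that $\gamma$ is a $p$-core, so its abacus has every runner \emph{solid}: the beads form an unbroken top segment with all gaps below, since the core is obtained by pushing every bead as far up as possible. For a solid runner, applying $\delta$ produces the configuration ``solid top, one gap, one isolated bead'', and a short check shows that reversing this configuration gives the same result as first reversing the (still solid) runner and then applying $\delta$ to it; thus $\mathrm{rev}$ and $\delta$ commute on every runner of $\gamma$. I would stress that solidity is essential here: the identity genuinely fails for runners with interior gaps, so it is precisely the core hypothesis that makes the converse work. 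With the commutation in hand, $\nu_k$ is fixed by conjugation, and combining the two inclusions identifies the self-conjugate partitions of $\Bgot_\gamma$ with exactly $\{\nu_k\mid 1\le k\le\frac{p-1}{2}\}$.
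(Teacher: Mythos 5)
Your proof is correct and follows essentially the same route as the paper: the paper's argument is exactly the abacus discussion preceding the lemma (the middle-runner analysis forcing the two bead moves onto a pair of opposite runners, whose labels sum to $p-1$). The only difference is that you also verify explicitly that each $\nu_k$ is genuinely self-conjugate via the commutation of reversal with the bead move on solid runners, a converse inclusion the paper leaves implicit; this is a welcome (and correct) addition rather than a different method.
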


\medskip

\begin{example} There are exactly two self-conjugate partitions in the $5$-block $\Bgot_{(2,2)}$, from Example \ref{ex:5core}. They are $(6,3,2,1^3)=\lr{1,3}$ and $(7,2,1^5)=\lr{0,4}$. \demo
\end{example}

\subsubsection{Notation for \texorpdfstring{$p$}{p}-regular partitions in \texorpdfstring{$\Bgot_\gamma$}{Bg}}
For this subsection, $\gamma$ is not necessarily self-conjugate. 

In \cite{fayers}, Fayers introduced a notation $\lceil \cdot \rfloor$ for indexing $p$-regular partitions in $\Bgot_\gamma$. We recall this notation and we slightly modify it for our convenience. Recall that, any partition in $\Bgot_\gamma$ corresponds to one of $\lr{i}$, $\lr{i^2}$, or $\lr{i,j}$ for $0\leq i<j \leq p-1$ and $0\leq i \leq p-1$. Now, some of these partitions are $p$-regular, and this depends on $\gamma$.  Let $(\li i \gamma_j)_{i j}$ be the pyramid of $\Bgot_\gamma$.  For  $0\leq i \leq j <p$ with $i<p-1$, define

\[
\fn{i,j}:= 
\begin{cases*}
\lr{i+1} & if $i=j$ and $\li{i+1}0_{i+2}$ ,\\
\lr{i+1,i+2} & if $i=j$ and $\li{i+1}1_{i+2}$ ,\\
\lr{i+1,j}  & if $i\neq j$ and $\li{i+1}0_{j}$ ,\\
\lr{j^2}  & if $i\neq j$ and $\li{i+1}1_{j}$ and $\li{i}0_{j}$ ,\\
\lr{i}  & if $i\neq j$ and $\li{i}1_{j}$ and $\li{i}0_{j+1}$,\\
\lr{i,j+1}  & if $i\neq j$ and $\li{i}1_{j+1}$ .\\
\end{cases*}
\]

From the abacus and the information encoded in the pyramid, it is easy to see that each $p$-regular partition corresponds to exactly one of $\fn{i,j}$ for $0\leq i \leq j <p$ and $i<p-1$. We refer to this notation for $p$-regular partitions in $\Bgot_\gamma$ as $\fn{\cdot}$-notation.

\begin{remark*}
If $\fn{\cdot}^\textup{F}$ denotes Fayers' labelling for $p$-regular partitions as originally defined in \cite{fayers}, then
\[
\fn{i,j}:= 
\begin{cases*}
\fn{i+1}^\textup{F} & if $i=j$ and,\\
\fn{i+1,j}^\textup{F} & otherwise.
\end{cases*}
\]
\demo
\end{remark*}

\noindent Notice that $p$-regular partitions are  in bijection with the set of all but one entry in $(\li i \gamma_j)$ by making
\begin{equation}\label{eq:bijpyramid}
\begin{array}{ccc}
\Reg{p}{\Bgot_\gamma} & \leftrightsquigarrow & (\li i \gamma_j)\\
\fn{i,j} & \longleftrightarrow & \li i \gamma_j
\end{array}
\end{equation}
for $0\leq i \leq j <p$ and $i<p-1$. The reason for using $\fn{\cdot}$-notation (and shifting the original definition) is the easy description of this correspondence. There is only the entry $\li{p-1} \gamma_{p-1}$, the last entry to the right of row $0$ in the pyramid, which is not associated to a $p$-regular partition.

\begin{example}\label{ex:5core2} Recall the pyramid for the $5$-block $\Bgot_{(2,2)}$ of $\F_5\mathfrak{S}_{14}$, shown in
Example \ref{ex:5core}. The $5$-regular partitions in $\Bgot_{(2,2)}$ are $\Reg{4}{\Bgot_{(2,2)}}=\{ (12,2),\ (7^2),\ (6,4^2),\ (3^4,1^2),\ (11,3),\ (7,4,3),\ (6,3^2,1^2),\ (5, 3^2,1^3),$ $(3^3, 2, 1^3),\ (8, 3^2),\ (7,3^2 , 1),\ (6, 3, 2, 1^3),\ (4,3^2, 1^4),\ (7, 2^2, 1^3) \}$ their corresponding $\fn{\cdot}$ notations, in the same order are $\fn{3,3},\ \fn{2,2},\ \fn{1,1},\ \fn{0,0},\ \fn{3,4},\ \fn{2,3},\ \fn{1,3},\ \fn{1,2},\ \fn{0,1},\ \fn{2,4},\ \fn{1,4},\ \fn{0,3},\ \fn{0,2}$ and $\fn{0,4}$. \demo
\end{example}

%--------------------------------------------------------------------------------------------------------

\subsection{Richards' \texorpdfstring{$\dr$}{dr} map and the Mullineux map in the pyramid}\label{sec:big}

In this section we explore in detail the correspondence (\ref{eq:bijpyramid}) between $p$-regular partitions in $\Bgot_\gamma$ and entries in the pyramid. We will see how these partitions are distributed in the pyramid: in which positions are the self-mullineux partitions and what the position in the pyramid says with respect to the dominance order.

\subsubsection{Richards' \texorpdfstring{$\dr$}{dr} map}

In \cite{richards}, Richards gives an explicit description of decomposition numbers for blocks of weight $2$. For this, he defines a value $\dr \lambda$ associated to every partition $\lambda$ in such a block. We recall this definition and some of his results which we use later.

As already noted, the core $\gamma$ of any partition $\lambda$ in $\Bgot_\gamma$ is obtained by succesive removal of two rim $p$-hooks. Let $\dr \lambda$ be the absolute value of the difference of the leg lengths of the two rim $p$-hooks. Then $0\leq \dr \lambda = \dr \lambda' \leq p-1$. The value of $\dr \lambda$ is independent of the way in which the rim hooks are removed, see \cite[Lemma 4.1]{richards}. Then $\dr \lambda$ is well defined.

Denote by $\dr_l$ the set of partitions in $\Bgot_\gamma$ such that their $\dr$ value is $l$, that is,
\[
\dr_l=\{\lambda \vdash n \mid \lambda \in \Bgot_\gamma \ \ \text{and}\ \ \dr \lambda=l\}.
\]
We write $\dr_{\gamma,l}$ if the block/core needs to be specified and it is not clear from the context. We denote $\dreg_l$ the subset of $p$-regular partition in $\dr_l$, so that $\dreg_l=\Reg{p}{\Bgot_\gamma} \cap \dr_l$.
We have
\[
\Bgot_\gamma = \bigsqcup_{l=0}^{p-1}\; \dr_l.
\]

\begin{example}\label{ex:5core3} Following Example \ref{ex:5core2}, we have, for partitions in the block $\Bgot_{(2,2)}$ of $\F_5 \mathfrak{S}_{14}$: 
\[
\begin{array}{l}
\dr_0 =  \{\hil{(2^2, 1^{10})},\ \hil{(2^7)},\ (3^4, 1^2),\ (6, 4^2),\ (7^2),\ (12, 2)\} \\
\dr_1 =  \{\hil{(2^3, 1^8)}, (3^3, 2, 1^3),\ (5, 3^2, 1^3),\ (6, 3^2, 1^2),\ (7, 4, 3),\ (11, 3)\}\\
\dr_2 = \{\hil{(3^3, 1^5)},\ (4,3^2, 1^4),\ (6, 3, 2, 1^3),\ (7, 3^2, 1),\ (8, 3^2)\} \\
\dr_3 = \{\hil{(6, 3, 1^5)},\ (7, 2^2, 1^3) \} \\
\dr_4 = \{\hil{(7, 2, 1^5)}\}
\end{array}
\]
where highlighted partitions are those which are not $5$-regular. The rest of the partitions are $5$-regular and, in $\fn{\cdot}$-notation they are, in the same order:

\[
\begin{array}{l}
\dreg_0 =  \{\fn{0,0}, \fn{1,1}, \fn{2,2}, \fn{3,3}\} \\[2mm]
\dreg_1 =  \{\fn{0,1}, \fn{1,2}, \fn{1,3}, \fn{2,3}, \fn{3,4}\}\\[2mm]
\dreg_2 = \{\fn{0,2}, \fn{0,3}, \fn{1,4}, \fn{2,4}\} \\[2mm]
\dreg_3 = \{\fn{0,4} \} \\[2mm]
\end{array}
\]

 \demo
\end{example}

From the abacus and the pyramid, we have some information about particular partitions belonging in some of the sets $\dr_l$.

\begin{lemma}[\text{\cite[\S 4, p. 398]{richards}}]\label{lem:labelselfconj} Let $1\leq k \leq \frac{p-1}{2}$ and let $\nu_k = \Lr{\frac{p-1}{2}-k,\frac{p-1}{2}+k}$ be a self-conjugate partition in $\Bgot_\gamma$. Then $\nu_k \in \dr_{2k-1}\cup \dr_{2k}.$
\end{lemma}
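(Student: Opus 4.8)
The plan is to translate the statement into a bead‑counting problem on the abacus of $\gamma$ and to compute $\dr\nu_k$ directly. Writing $i=\frac{p-1}{2}-k$ and $j=\frac{p-1}{2}+k$, we have $\nu_k=\Lr{i,j}$ with $j-i=2k$, and $i,j$ are opposite runners. By definition $\nu_k$ is obtained from the abacus of $\gamma$ by sliding the lowest bead on runner $i$ down one position (from $\rho_i$ to $\rho_i+p$) and the lowest bead on runner $j$ down one position (from $\rho_j$ to $\rho_j+p$), and $\dr\nu_k$ is the absolute difference of the leg-lengths of the two $p$-rim-hooks thereby created. Since $\rho_i\not\equiv\rho_j\pmod p$ we have $\rho_j-\rho_i\neq p$, so exactly one of $\rho_j-\rho_i<p$ (i.e.\ $\li i \gamma_j=1$) or $\rho_j-\rho_i>p$ (i.e.\ $\li i \gamma_j=0$) holds. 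Because, by \cite[Lemma 4.1]{richards}, $\dr\nu_k$ does not depend on the order in which the hooks are removed, I am free to remove them in a convenient order.

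The key step is the following counting fact: sliding the lowest bead of the runner labelled $t$ down one position, from the core configuration, adds a $p$-rim-hook of leg-length exactly $p-1-t$. Indeed, the leg-length equals the number of beads lying strictly between $\rho_t$ and $\rho_t+p$; the unique position of another runner $s$ in this range is $\rho_t+\big((\rho_s-\rho_t)\bmod p\big)$, and this is occupied precisely when $\rho_s>\rho_t$. By the labelling convention of \S\ref{sec:partitions} the runners with $\rho_s>\rho_t$ are exactly those labelled $t+1,\dots,p-1$, so there are $p-1-t$ of them.

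Now I would remove the hook on runner $i$ first and then the hook on runner $j$. After the first removal the bead on runner $i$ is back at $\rho_i$, so the second leg-length is computed in a core-plus-one-move configuration and equals $p-1-j$ by the counting fact. The first leg-length is the number of beads in the range $(\rho_i,\rho_i+p)$ in the configuration of $\nu_k$: it equals the core value $p-1-i$, corrected only through the single position of runner $j$ lying in this range, namely $\rho_i+\big((\rho_j-\rho_i)\bmod p\big)$. If $\li i \gamma_j=0$ this position lies on runner $j$ strictly above its lowest bead, hence is occupied both in $\gamma$ and in $\nu_k$, so the first leg-length is $p-1-i$ and
\[
\dr\nu_k=\big|(p-1-i)-(p-1-j)\big|=j-i=2k.
\]
If instead $\li i \gamma_j=1$ this position is exactly $\rho_j$, occupied in $\gamma$ but vacated in $\nu_k$ (the bead having moved to $\rho_j+p$); the first leg-length is then $p-2-i$ and
\[
\dr\nu_k=\big|(p-2-i)-(p-1-j)\big|=j-i-1=2k-1.
\]
In either case $\dr\nu_k\in\{2k-1,2k\}$, hence $\nu_k\in\dr_{2k-1}\cup\dr_{2k}$, as claimed.

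The main obstacle, and the point where a naive computation fails, is the interaction between the two bead movements: the individual leg-lengths are genuinely order-dependent, since moving the bead on runner $j$ can create or destroy a bead inside the range associated to runner $i$ and symmetrically, so one cannot simply superpose two ``independent'' corrections. Fixing a removal order — legitimate only because $\dr$ is order-independent by \cite[Lemma 4.1]{richards} — is precisely what makes the two possible corrections collapse to the single term governed by the pyramid entry $\li i \gamma_j$, which is exactly the entry whose value separates $2k-1$ from $2k$.
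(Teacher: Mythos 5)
Your proof is correct and takes essentially the same approach as the paper: both arguments work on the abacus, use the fact that the runner-hook on the runner labelled $t$ has leg-length $p-1-t$ when computed from the core configuration, and locate the single $\pm 1$ correction in the pyramid entry $\li{i}\gamma_j$ (you apply the correction to the hook on runner $i$ by removing hooks from $\nu_k$, while the paper applies it to the hook on runner $j$ by adding hooks to $\gamma$ — an immaterial difference since only $|l_1-l_2|$ matters). The case analysis and the resulting values $2k$ and $2k-1$ agree exactly with the paper's.
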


This is easily seen in the abacus. We include a proof for completeness, which is adapted to notation in this paper.

\begin{proof}
Consider the abacus of a partition $\lambda$, and suppose that the runners are labelled in some way with integers $0, 1, \ldots, p-1$. Let $1\leq r \leq p-1$, and let us call \emph{runner-hook $r$}, the hook added to $\lambda$ by moving down one position the last bead in runner $r$. Recall, from \S \ref{sec:partitions}, that the leg-length of the runner-hook $r$ is the number of beads between the start and final position of the moved bead. Now, If we take the abacus of $\gamma$, from the chosen labelling of the runners (defined in the same section), the leg-length of the runner-hook $r$ is 
\[
p-1-r.
\]

We want to calculate $\dr \nu_k$. For this, we calculate the leg-lengths of two $p$-hooks successively added to $\gamma$ to obtain $\nu_k$: the partition $\nu_k = \Lr{\frac{p-1}{2}-k,\frac{p-1}{2}+k}$ can be obtained from the abacus of $\gamma$ by first moving down one bead in runner $\frac{p-1}{2}-k$. From the discussion above the leg-length of this hook is
\[
l_1 := p-1-\left(\frac{p-1}{2}-k\right)=\frac{p-1}{2}+k.
\]
Now, we have a new abacus in which we have to calculate the leg-length of the runner-hook $\frac{p-1}{2}+k$, runner in which we move the second bead. In the previous abacus for $\gamma$, this leg-length would have been $p-1-\left(\frac{p-1}{2}+k\right)=\frac{p-1}{2}-k$, but since we already moved one bead in the abacus for $\gamma$, we might have added $1$ to this leg-length, since we might have added one bead (the first moved bead) to the set of beads between the start and final position of the second bead. This happens only if the difference of positions of the last beads in runners $\frac{p-1}{2}-k$ and $\frac{p-1}{2}+k$, in the abacus for $\gamma$, is less than $p$. In other words, only if $\li{(\frac{p-1}{2}-k)}\gamma_{(\frac{p-1}{2}+k)}=1$. Hence, the leg-length of the runner hook $\frac{p-1}{2}+k$ in this new abacus is
\[
l_2 := 
\begin{cases*}
\frac{p-1}{2}-k+1 & if $\li{(\frac{p-1}{2}-k)}\gamma_{(\frac{p-1}{2}+k)}=1$,\\
\frac{p-1}{2}-k & otherwise.
\end{cases*}
\]
Then 
\[
\dr \nu_k:= 
\begin{cases*}
2k-1 & if $\li{(\frac{p-1}{2}-k)}\gamma_{(\frac{p-1}{2}+k)}=1$,\\
2k & otherwise.
\end{cases*}
\]
\end{proof}

The family of partitions $\dr_0$ can be partitioned into two sets $\dr_0^+$ and $\dr_0^-$ as follows. A partition $\lambda$ of weight $2$ has either two rim $p$-hooks or one rim $p$-hook and one rim $2p$-hook (since the $p$-weight is also equal to the number of rim hooks of length divisible by $p$). In the first case, Richards showed that the leg lengths of the rim $p$-hooks are consecutive integers. Define $\lambda$ to be in $\dr_0^+$ or $\dr_0^-$ following

\[
\lambda \in \dr_0 \text{ has }
\begin{cases*}
\;\;\; \text{two rim $p$-hooks} & $ \begin{cases*} \lambda \in \dr_0^+ & if the larger leg is of even length,\\
 \lambda \in \dr_0^- & otherwise. \end{cases*}$\\
 \qquad \qquad \text{or}\\
\begin{array}{l}
\text{ one rim $p$-hook and}\\
\text{ one rim $2p$-hook}
\end{array} & $ \begin{cases*} \lambda \in \dr_0^+ & if the leg length of the rim $2p$-hook is $\equiv 0 \text{ or } 3$ mod $4$, \\
 \lambda \in \dr_0^- & otherwise. \end{cases*}$
\end{cases*} \vspace{1cm}
\]
Richards proved the following

\begin{proposition}[\text{\cite[Lemma 4.2 and 4.3]{richards}}] \label{prop:richardsdrond}
For $0\leq i\leq p-1$, the set $\dr_l$ is totally ordered by $\trianglelefteq$. Moreover the partitions of $\Bgot_\gamma$ which are $p$-singular are precisely: the smallest partition in each $\dr_l$ for $0 < l \leq p-1$ and the smallest partition in each of $\dr_0^+$ and $\dr_0^-$.
\end{proposition}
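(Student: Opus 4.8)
The plan is to carry out everything on the abacus of $\gamma$, translating both assertions into statements about beta-numbers. Since all partitions of $\Bgot_\gamma$ share the same size and the same $p$-core, I fix a common number of beads and compare them through their beta-sets: writing $\beta_1^\lambda>\beta_2^\lambda>\cdots$ for the decreasingly ordered beta-numbers of $\lambda$, one has $\lambda\trianglerighteq\mu$ if and only if $\sum_{i=1}^k\beta_i^\lambda\geq\sum_{i=1}^k\beta_i^\mu$ for every $k$, because $\sum_{i=1}^k\lambda_i=\sum_{i=1}^k\beta_i^\lambda+\binom{k+1}{2}$. In this language the three families of $\lr{\cdot}$ notation are explicit modifications of the beta-set of $\gamma$: $\lr{i,j}$ pushes the lowest bead of runners $i$ and $j$ down one row, $\lr{i}$ pushes the lowest bead of runner $i$ down two rows, and $\lr{i^2}$ pushes the lowest two beads of runner $i$ down one row each. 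Combined with the fact, established inside the proof of Lemma~\ref{lem:labelselfconj}, that pushing the lowest bead of runner $r$ down one row creates a hook of leg-length $p-1-r$, this records $\dr\lambda$ for every $\lambda$ in terms of the runner labels and of the pyramid entries $\li{i}\gamma_j$, the correction $+1$ to a leg-length occurring exactly when the two moved beads interact, i.e.\ when $\li{i}\gamma_j=1$.

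To prove that $\dr_l$ is totally ordered it is enough to show that any two of its elements are comparable for $\trianglelefteq$, since a finite poset all of whose pairs are comparable is a chain. Given $\lambda,\mu\in\dr_l$ written in $\lr{\cdot}$ notation, I would compare their decreasing beta-sequences using the explicit descriptions above and check that the sequence of partial-sum differences never changes sign. Intuitively, two configurations sharing the leg-length difference $l$ are ordered by how high up the abacus the bead-moves take place — attaching the rim-hooks nearer the top rows raises the early partial sums and so yields the $\trianglelefteq$-larger partition — and the point is that this orders $\dr_l$ linearly. Carrying this out is a finite case analysis on the types $\lr{i,j},\lr{i},\lr{i^2}$ of $\lambda$ and $\mu$, in which the pyramid entries $\li{i}\gamma_j$ record precisely when two moved beads lie closer than $p$ and the interaction correction to a leg-length must be taken into account.

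For the singularity statement I use the standard abacus criterion: $\lambda$ is $p$-singular if and only if there is an integer $b$ for which the positions $b-p+1,\dots,b$ are all occupied while $b-p$ is empty, that is, a run of $p$ consecutive beads sitting above a gap — equivalently a positive part repeated $p$ times. A $p$-core admits no such run, for otherwise it would contain a vertical $p$-rim-hook; hence $\gamma$ is $p$-regular and any singularity in $\Bgot_\gamma$ is produced by the two bead-moves. Running through the three move-types shows that a run of length $p$ is completed precisely when the moved beads fill the one remaining gap of an otherwise complete string of $p$ consecutive positions, and that for each fixed $l$ this happens only for the flattest configuration, which by the first part is the $\trianglelefteq$-minimal element of $\dr_l$; every larger configuration still leaves a gap and so is $p$-regular. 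When $l=0$ the two hooks have equal leg-length and there are two such flattest configurations, reflecting the two parities that separate $\dr_0^+$ from $\dr_0^-$ (and absorbing the partitions whose weight-$2$ structure is a single rim $2p$-hook); these are exactly the minima of $\dr_0^+$ and of $\dr_0^-$, giving the two $p$-singular partitions of $\dr_0$.

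The main obstacle is the case $l=0$: there the $\dr_0^+/\dr_0^-$ split, the equality of the two leg-lengths and the partitions built from a single $2p$-hook must all be fitted into one chain while isolating exactly two singular minima. More broadly, the delicate point throughout is to make the abacus singularity criterion interface cleanly with each of the three move-types and to keep track of the interaction correction dictated by the pyramid; once this bookkeeping is organised, both the comparability needed for the total order and the identification of the minimal singular partitions follow.
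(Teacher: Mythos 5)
This proposition is one the paper does not prove at all: it is imported verbatim from Richards (his Lemmas 4.2 and 4.3), whose own argument runs through the $\{s,t\}$-coordinates for weight-$2$ partitions and the dominance criterion $\{s,t\}\domleq\{s',t'\}\iff s\leq s'\text{ and }t\leq t'$ that the present paper only quotes later, in (\ref{eq:domrich}). Your decision to reprove it directly on the abacus via beta-numbers is therefore a genuinely different (and legitimate) route, and your preliminary reductions are sound: the identity $\sum_{i=1}^k\lambda_i=\sum_{i=1}^k\beta_i^\lambda+\binom{k+1}{2}$ does convert dominance into partial sums of beta-numbers for partitions of equal rank, the three bead-move types do exhaust $\Bgot_\gamma$, and the abacus criterion for $p$-singularity is the right tool.

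The problem is that everything after these reductions is announced rather than carried out, and the deferred steps are precisely the content of the proposition. For the first assertion you write that you ``would compare their decreasing beta-sequences \ldots and check that the sequence of partial-sum differences never changes sign,'' supported only by the heuristic that configurations with the same leg-length difference are ordered by how high the bead-moves occur; but comparability of an arbitrary pair drawn from the types $\lr{i,j}$, $\lr{i}$, $\lr{i^2}$ with equal $\dr$-value is exactly what must be verified, and it is not a priori clear that no pair is incomparable (indeed partitions in the \emph{same block} with different $\dr$-values are frequently incomparable, so something specific to fixed $\dr$ must be exploited and is not). For the second assertion, the claims that ``a run of length $p$ is completed precisely when the moved beads fill the one remaining gap of an otherwise complete string'' and that ``for each fixed $l$ this happens only for the flattest configuration'' are again the theorem restated, not derived: you never exhibit, for each $l\geq 1$, the unique singular configuration, nor show that exactly two arise for $l=0$ and that they land one in each of $\dr_0^+$ and $\dr_0^-$ (whose definition, involving parities of leg-lengths and the leg-length of a $2p$-hook mod $4$, your sketch does not engage with). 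As it stands the proposal is a credible plan with the two essential computations missing; to be a proof it would need the explicit case analysis, or else it should simply cite Richards as the paper does.
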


Then, there are exactly $(p-1)+2=p+1$ $p$-singular partitions in $\Bgot_\gamma$. Which agrees with the counting of partitions and $p$-regular partitions in $\Bgot_\gamma$ in \S \ref{sec:notationw2}.

Richards's $\dr$-function on $p$-regular partitions can be expressed in the $\fn{\cdot}$ notation. Direct analysis with the abacus of $\gamma$ and its relation with the leg-lengths of $p$-hooks of partitions in $\Bgot_\gamma$ gives:

\begin{proposition}[\text{\cite[Proposition 4.1]{fayers}}]\label{prop:drond} Let $(\li i \gamma_j)_{i j}$ be the pyramid of $\gamma$. For $0 \leq i \leq j <p$ and $i<p-1$ 
\[
\dr \fn{i,j}= j-i-1 + \li i \gamma_j.
\] 
\end{proposition}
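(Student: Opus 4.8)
The plan is to compute $\dr\fn{i,j}$ directly from the abacus by translating the $\fn{\cdot}$-notation into a $\lr{\cdot}$-label and then reading off the two leg-lengths. The key tool, already established in the proof of Lemma \ref{lem:labelselfconj}, is that moving down one bead in runner $r$ of the abacus of $\gamma$ produces a runner-hook of leg-length $p-1-r$; and that when two beads are moved in distinct runners $r<s$, the leg-length of the second hook must be incremented by $1$ precisely when $\li r \gamma_s = 1$, because the first moved bead then lies strictly between the start and end positions of the second. So for a partition $\lr{r,s}$ with $r<s$, the two leg-lengths are $p-1-s$ and $(p-1-r)+\li r\gamma_s$, whence
\[
\dr\lr{r,s} = \bigl|(p-1-r)+\li r\gamma_s - (p-1-s)\bigr| = s-r+\li r\gamma_s,
\]
using that $s>r$ makes the quantity inside the absolute value non-negative.

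First I would dispose of the two ``diagonal'' cases $i=j$. By definition $\fn{i,i}=\lr{i+1}$ when $\li{i+1}0_{i+2}$ and $\fn{i,i}=\lr{i+1,i+2}$ when $\li{i+1}1_{i+2}$. In the double-move case $\lr{i+1}$ (one bead slid down two positions), the two successive hooks have equal leg-length $p-1-(i+1)$, so $\dr=0$; and the formula gives $j-i-1+\li i\gamma_j = 0+\li i\gamma_i = 0$ since $\li i\gamma_i=0$ would contradict $\li i\gamma_i$ being on the diagonal --- here one must check the convention carefully, noting that $\li i\gamma_i$ equals $1$ (a bead on runner $i$ differs from itself by $0<p$), giving $\dr=0$ only if the diagonal entry is handled as $j-i-1+1 = 0$. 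For $\lr{i+1,i+2}$ the formula above yields $\dr = (i+2)-(i+1)+\li{i+1}\gamma_{i+2} = 1+1 = 2\cdot\tfrac12$, matching $j-i-1+\li i\gamma_i$; I would verify these small arithmetic identities explicitly against the case-definition of $\fn{i,j}$.

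For the off-diagonal cases $i\neq j$, I would substitute each of the four sub-cases of the definition of $\fn{i,j}$ into the computed value of $\dr\lr{r,s}$ and check it equals $j-i-1+\li i\gamma_j$. For instance, when $\li{i+1}0_j$ we have $\fn{i,j}=\lr{i+1,j}$, so $\dr = j-(i+1)+\li{i+1}\gamma_j = j-i-1+0$, and I must match this to $j-i-1+\li i\gamma_j$, i.e.\ verify $\li i\gamma_j=0$ in this regime; this is where the pyramid's monotonicity (a $0$ forces $0$s above it, so $\li{i+1}0_j$ together with the position of $\li i\gamma_j$ pins down the value) does the bookkeeping. The remaining sub-cases $\lr{j^2}$, $\lr i$, and $\lr{i,j+1}$ require analogous verifications, each using the relevant pyramid entries $\li{i+1}\gamma_j$, $\li i\gamma_j$, $\li i\gamma_{j+1}$ specified in the case-split.

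The main obstacle is the careful alignment of the index conventions between the $\fn{\cdot}$-labels and the $\lr{\cdot}$-labels: each sub-case shifts $i\mapsto i+1$ or $j\mapsto j+1$ or introduces a repeated/doubled move, and in the doubled and squared cases ($\lr i$ and $\lr{i^2}$) the two hooks lie on the \emph{same} runner, so the increment argument from Lemma \ref{lem:labelselfconj} must be redone --- the second hook's leg-length picks up every bead passed, which for a two-position slide on one runner is governed by whether an intervening bead sits there, and this is exactly the information stored in the flanking pyramid entries. Once each of the six sub-cases is checked and seen to collapse to the single uniform expression $j-i-1+\li i\gamma_j$, the proposition follows. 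I would present this as a short case-by-case verification rather than a single slick argument, since the content is precisely that all six cases conspire to give one clean formula.
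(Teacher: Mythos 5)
Your overall strategy --- translate each of the six sub-cases of the $\fn{\cdot}$-definition into a $\lr{\cdot}$-label, compute the two leg-lengths on the abacus, and check that every case collapses to $j-i-1+\li i\gamma_j$ --- is the right one (the paper itself only cites Fayers and alludes to exactly this kind of direct abacus analysis). But your central intermediate formula has a sign error that breaks the verification. When the beads on runners $r<s$ are moved (first $r$, then $s$), the bead that migrates from $\rho_r$ to $\rho_r+p$ lands strictly inside the interval $(\rho_s,\rho_s+p)$ precisely when $\rho_s-\rho_r<p$, i.e.\ when $\li r\gamma_s=1$; so it is the hook coming from runner $s$, with base leg-length $p-1-s$, that is incremented. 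The two leg-lengths are therefore $p-1-r$ and $(p-1-s)+\li r\gamma_s$, giving
\[
\dr\lr{r,s}=\bigl|(p-1-r)-(p-1-s)-\li r\gamma_s\bigr|=s-r-\li r\gamma_s,
\]
with a \emph{minus} sign on the correction. You attached the increment to the hook of leg-length $p-1-r$ instead, obtaining $s-r+\li r\gamma_s$. This is not a harmless bookkeeping slip: in the sub-case $\li i1_{j+1}$, where $\fn{i,j}=\lr{i,j+1}$, your formula yields $j-i+2$ while the proposition requires $j-i-1+\li i\gamma_j=j-i$; and in the diagonal sub-case $\lr{i+1,i+2}$ your computation produces ``$1+1=2\cdot\tfrac12$'', which is an arithmetic fudge masking the fact that $2\neq 0$. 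With the corrected sign both cases come out right ($\dr\lr{i,j+1}=j+1-i-1=j-i$ and $\dr\lr{i+1,i+2}=1-1=0$). Note that the correct sign is already forced by the paper's own proof of Lemma \ref{lem:labelselfconj}, where $\dr\nu_k=2k-1$ (not $2k+1$) when the relevant pyramid entry is $1$.

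The remaining ingredients of your plan are sound but only sketched: you correctly invoke pyramid monotonicity to pin down $\li i\gamma_j$ from the flanking entries in each sub-case, and you correctly flag that the one-runner cases $\lr{i}$ and $\lr{j^2}$ need a separate leg-length count (there the second leg-length is governed by how many runners $t$ satisfy $\rho_t-\rho_r>p$, resp.\ $\rho_j-\rho_t<p$, which is read off from a whole row segment of the pyramid, not a single entry). Once the sign of the correction term is fixed and these two cases are actually carried out, all six cases do conspire to give $j-i-1+\li i\gamma_j$, including the diagonal ones via the convention $\li i\gamma_i=1$.
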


 With such an expression for $\dr \fn{i,j}$, together with the correspondence (\ref{eq:bijpyramid}), we can say precisely to which positions in the pyramid correspond the partitions $\dreg_l$ for a given $l$. This helps for graphically visualizing $\dreg_l$ in the pyramid:

\begin{corollary} \label{cor:richardscorresp}
For $1\leq l \leq p-1$, the set $\dreg_l$ is in correspondence with the set  of ``1'' entries in the $l$-th row on the pyramid and the ``0'' entries in the $(l+1)$-th row.
The set $\dreg_0$ corresponds to the first $p-1$ entries on the $0$-th row (all entries in the $0$-th row are ``1'') and the ``0'' entries in the $1$-st row.
\end{corollary}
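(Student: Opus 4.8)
The plan is to deduce the statement directly from the explicit formula for $\dr$ in $\fn{\cdot}$-notation recorded in Proposition \ref{prop:drond}, combined with the bijection (\ref{eq:bijpyramid}). Recall that $\dreg_l = \Reg{p}{\Bgot_\gamma} \cap \dr_l$ consists of the $p$-regular partitions $\lambda$ in $\Bgot_\gamma$ with $\dr\lambda = l$, and that by (\ref{eq:bijpyramid}) each such $\lambda$ equals $\fn{i,j}$ for a unique entry $\li i\gamma_j$ of the pyramid with $0 \leq i \leq j < p$ and $i < p-1$. Thus the problem reduces to determining, for each fixed $l$, exactly which entries $\li i\gamma_j$ satisfy $\dr\fn{i,j} = l$, and then transporting the answer back along (\ref{eq:bijpyramid}).

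First I would rewrite the defining equation as $\dr\fn{i,j} = j - i - 1 + \li i\gamma_j = l$ and split on the two possible values of the binary entry $\li i\gamma_j$. If $\li i\gamma_j = 1$, the equation becomes $j - i = l$, so the entry lies in the $l$-th row and is a ``$1$''; if $\li i\gamma_j = 0$, it becomes $j - i = l+1$, so the entry lies in the $(l+1)$-th row and is a ``$0$''. For every $1 \leq l \leq p-1$ this is exactly the claimed description of $\dreg_l$: the ``$1$'' entries of row $l$ together with the ``$0$'' entries of row $l+1$.

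It then remains to treat $l = 0$ separately, which is where the only genuine subtlety lies. Here the $\li i\gamma_j = 1$ case forces $j - i = 0$, i.e. it singles out the diagonal entries $\li i\gamma_i$. By the definition of the pyramid every diagonal entry equals $1$ (since $\rho_i - \rho_i = 0 < p$), so all of row $0$ consists of ``$1$''s. However, the bijection (\ref{eq:bijpyramid}) omits the single entry $\li{p-1}\gamma_{p-1}$, which is precisely the rightmost entry of row $0$; hence only the \emph{first} $p-1$ diagonal entries index partitions of $\dreg_0$. Adjoining, as in the general case, the ``$0$'' entries of row $1$ coming from the $\li i\gamma_j = 0$ branch completes the description of $\dreg_0$.

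I expect no serious obstacle here: the statement is a direct corollary of Proposition \ref{prop:drond}, and the only point requiring care is the bookkeeping of the excluded entry $\li{p-1}\gamma_{p-1}$ together with the observation that the entire $0$-th row is filled with ones, both of which are immediate from the definition of the pyramid.
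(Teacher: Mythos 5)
Your proposal is correct and follows essentially the same route as the paper: both deduce the statement directly from Proposition \ref{prop:drond} via the bijection (\ref{eq:bijpyramid}), splitting on the value of the pyramid entry (equivalently, on $i<j$ versus $i=j$) and handling the excluded entry $\li{p-1}\gamma_{p-1}$ in row $0$. No gaps.
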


\begin{proof}

Let $1\leq l \leq p-1$, and let $\lambda \in \dreg_l$. Write $\lambda$ as $\lambda=\fn{i,j}$ with $0 \leq i \leq j <p$ and $i<p-1$.

Suppose that $i<j$. If $\li i 0_j$, then from Proposition \ref{prop:drond}, $j-i=l+1$. So that $\li i\gamma_j=0$ is in row $l+1$. Now, if $\li i 1_j$ then from Proposition \ref{prop:drond}, $j-i=l$. So that $\li i\gamma_j=1$ is in row $l$.

If $i=j$, by definition of the pyramid, $\li i 1_i$ and this entry is on the $0$-th row. On the other hand, Proposition \ref{prop:drond} implies that $l=0$. And since $0\leq i < p-1$, this entry is one of the first $p-1$ entries in $0$-th row.

\end{proof}

\begin{example}\label{ex:5core4} We continue Example \ref{ex:5core3}. Associating $p$-regular partitions in $\Bgot_{(2,2)}$ with their corresponding entries in the pyramid, we have that, in the pyramid of $\Bgot_{(2,2)}$, entries are distributed as follows:
\begin{center}
\includegraphics[scale=0.6]{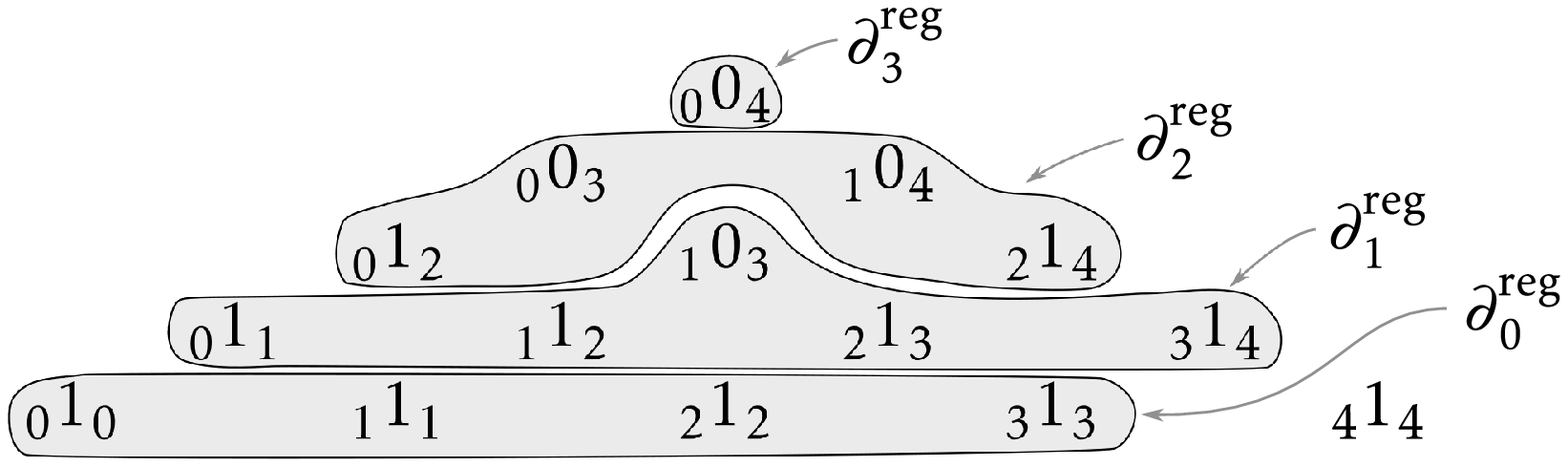}
\end{center}
\demo
\end{example}

%-------------------------------------------------------------------------------------

\subsubsection{Conjugation and the Mullineux map in the sets \texorpdfstring{$\dr$}{dr}}\label{sec:conjugat}
In this section we characterize self-conjugate and self-Mullineux partitions in a given set $\dr_l$, by studying how these involutions behave with respect to $\lessdom$. 

Recall, from Proposition \ref{prop:richardsdrond} that the sets $\dr_l$ are totally ordered by $\lessdom$. Let $1\leq l \leq p-1$ and suppose that $|\dr_l|=k_l$. Denote the partitions in $\dr_l$ as
\[
\hil{\lambda_1} \lessdom \lambda_2 \lessdom \cdots \lessdom \lambda_{k_l-1} \lessdom \lambda_{k_l},
\]
where the highlighted partition is the unique $p$-singular partition. Now, from the fact that $\dr \lambda= \dr \lambda'$ for any partition $\lambda$, and that conjugation reverses the dominance order (\cite[I.1.11]{macdonald}), we get that $\lambda_i'=\lambda_{k_l-i+1}$ for any $1\leq i \leq k_l$. Graphically, conjugation does the following in $\dr_l$ for $1\leq l \leq p-1$:

\begin{center}
\includegraphics[scale=0.27]{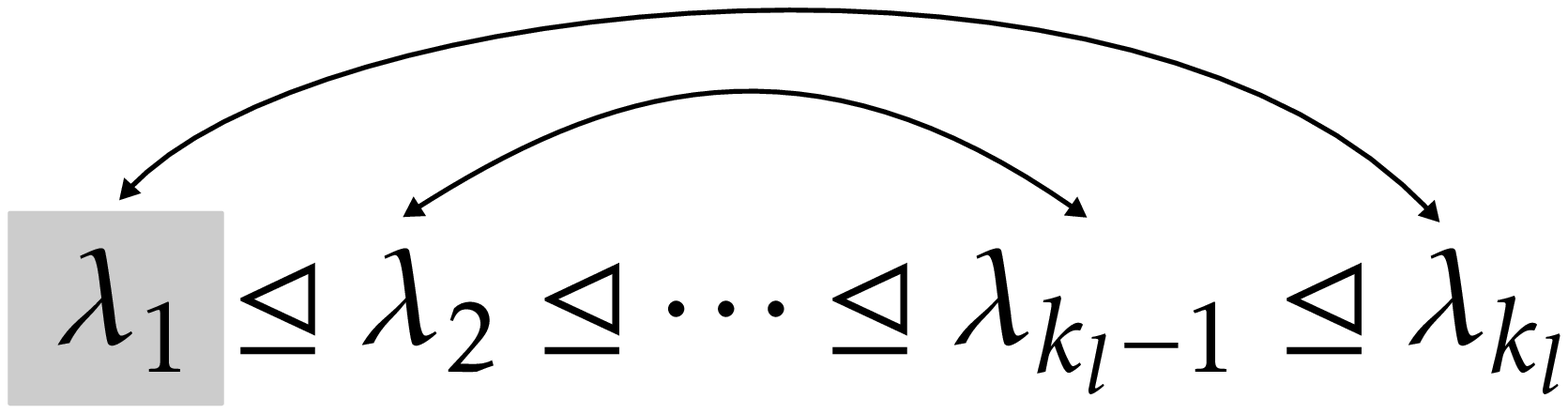}
\end{center}

For $l=0$ it is easy to see that, since $p$ is odd, if $\lambda \in \dr_0^+$ then $\lambda' \in \dr_0^-$, that is, conjugation changes the sign of partitions in $\dr_0$. Then, $k_0$ is even. Let $k_0=2j_0$ and denote partitions in $\dr_0^+$ as 
\[
\hil{\lambda_1^+} \lessdom \lambda_2^+ \lessdom \lambda_3^+ \lessdom \cdots \lessdom \lambda_{j_0-1}^+ \lessdom \lambda_{j_0}^+,
\]
and denote partitions in $\dr_0^-$ as 
\[
\hil{\tau_1^-} \lessdom \tau_2^- \lessdom \tau_3^- \lessdom \cdots \lessdom \tau_{j_0-1}^- \lessdom \tau_{{j_0}}^-,
\]
where the highlighted partitions are the only $p$-singular partitions in $\dr_0$. Then, since conjugation changes the sign and reverses the dominance order $(\lambda_i^+)'=\tau_{j_0-i+1}$. Graphically, conjugation does the following in $\dr_0$:

\begin{center}
\includegraphics[scale=0.27]{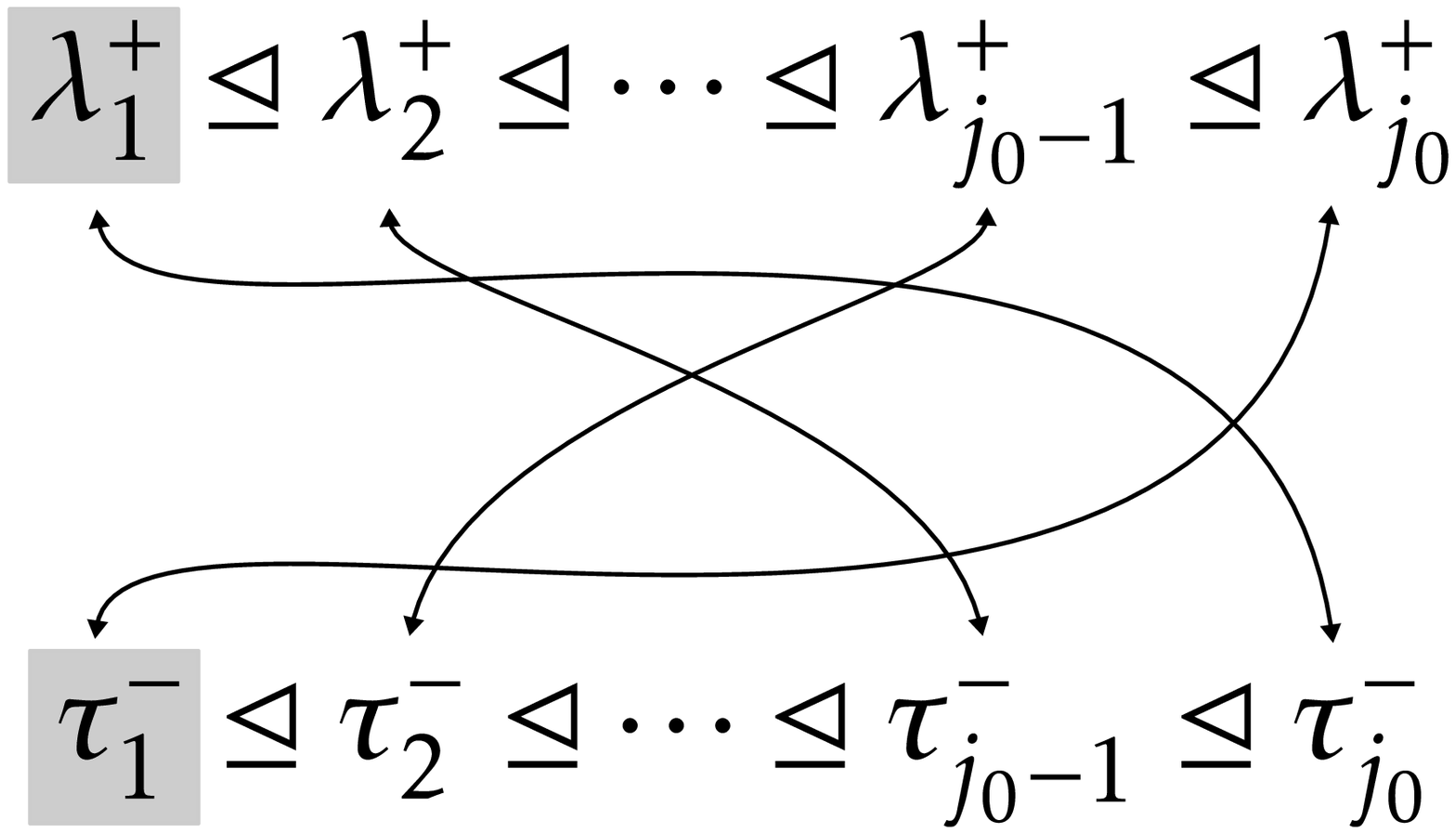}
\end{center}

Let us now see that the Mullineux map behaves in a similar way, but within $\dreg_l$. Recall that, for a $p$-regular partition $\mu$, the $p$-cores of $\mu$ and $\m(\mu)$ are conjugates. Since $\Bgot_\gamma$ is self-conjugate, then for any $\mu \in \Reg{p}{\Bgot_\gamma}$ the partition $\m(\mu)$ is also in $\Reg{p}{\Bgot_\gamma}$. Moreover, Richards showed that $(\m(\mu))'$ is the biggest partition (for $\lessdom$) in $\Bgot_\gamma$ such that $(\m(\mu))'\lessdom \mu$ and $\dr (\m(\mu))'= \dr \mu$, and if $\dr \mu=0$, it has the same sign as $\mu$ (\cite[Th. 4.4 and Prop. 2.12]{richards}). Then, for $2\leq i \leq k_l$ we have that $(\m(\lambda_i))'=\lambda_{i-1}.$ So that $\m(\lambda_i)=\lambda_{i-1}'=\lambda_{k_l-(i-1)+1}=\lambda_{(k_l-i+1)+1}.$ Graphically, the Mullineux map does the following in $\dr_l$ for $1\leq l \leq p-1$:
\begin{center}
\includegraphics[scale=0.3]{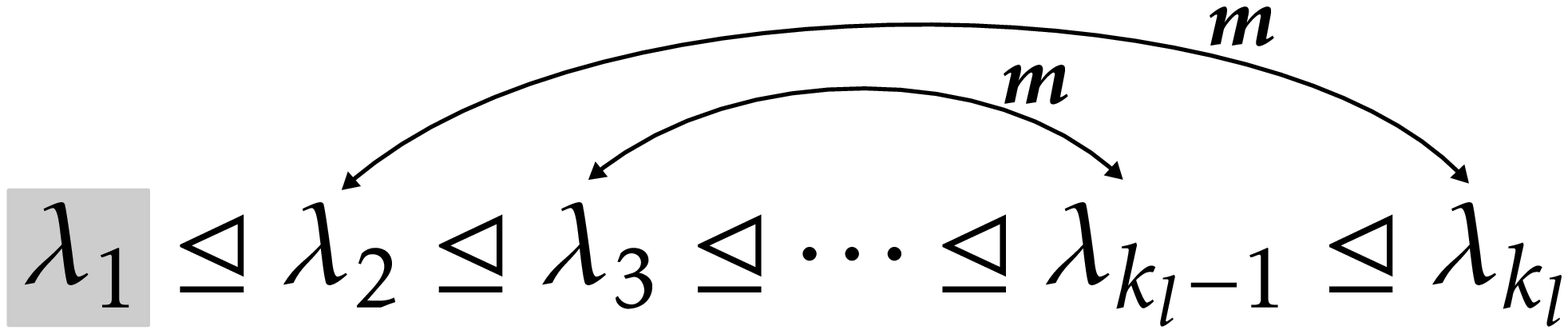}
\end{center}
For $l=0$, and for $2\leq i \leq j_0$ we have that $(\m(\lambda_i^+))'=\lambda_{i-1}^+$. We get that $\m(\lambda_i^+)=(\lambda_{i-1}^+)'=\tau_{j_0-(i-1)+1}^-=\tau_{(j_0-i+1)+1}^-.$ Graphically, the Mullineux map does the following in $\dr_0$:

\begin{center}
\includegraphics[scale=0.3]{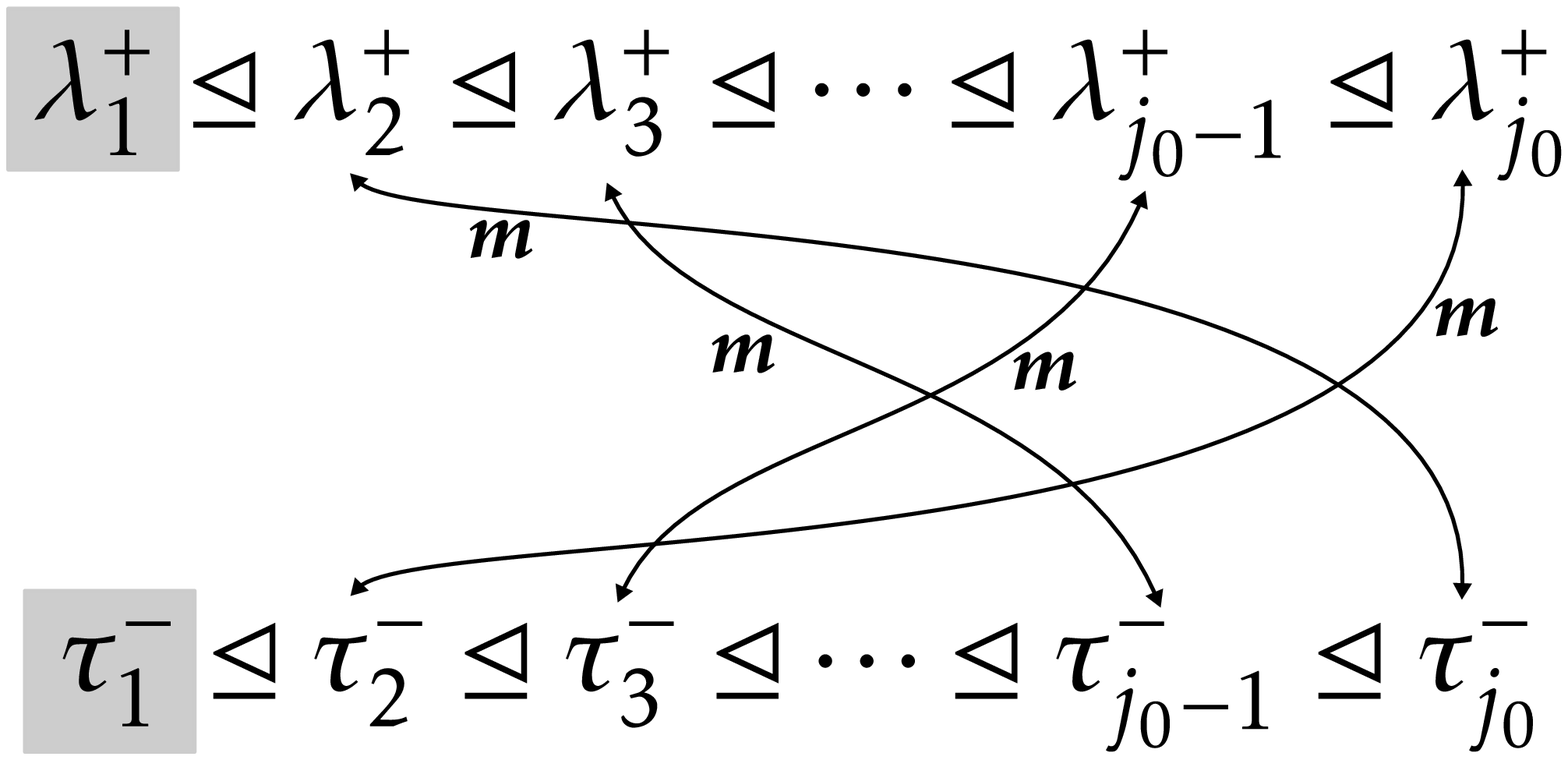}
\end{center}

The behaviour of these two involutions in the sets $\dr_l$ allows to easily deduce in which sets lie their fixpoints, that is, the self-conjugate and the self-Mullineux partitions: it all depends on the parities of $k_l$ for $1\leq 1 \leq p-1$ and $j_0$. The conclusion is summarized in the following lemma.

\begin{lemma}\label{lemma:drblocks}
The set $\dr_0$ does not contain any self-conjugate or self-Mullineux partition and $|\dr_0|$ is even. For $1 \leq l \leq p-1$, the set $\dr_l$ contains \emph{either} exactly one self-conjugate partition or one self-Mullineux partition: if $|\dr_l|$ is even, it contains one self-Mullineux partition. If $|\dr_l|$ is odd, it contains one self-conjugate partition.
\end{lemma}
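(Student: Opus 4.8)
The plan is to read off both assertions directly from the explicit formulas describing how conjugation and the Mullineux map act on the totally ordered chains $\dr_l$, which were established in the discussion immediately preceding the lemma. Everything will reduce to a parity count of the length of each chain.

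First I would dispose of $\dr_0$. Conjugation sends $\dr_0^+$ into $\dr_0^-$ and vice versa, so it is a sign-reversing bijection between the two sets; in particular $|\dr_0^+| = |\dr_0^-| = j_0$, whence $|\dr_0| = 2j_0$ is even, and no partition can be fixed since a fixed partition would have to lie in both $\dr_0^+$ and $\dr_0^-$. The same reasoning applies to the Mullineux map: by the formula $\m(\lambda_i^+) = \tau_{(j_0-i+1)+1}^-$ it too interchanges $\dr_0^+$ and $\dr_0^-$, so $\dr_0$ contains no self-Mullineux partition either.

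Next, fix $1 \le l \le p-1$ and write $\dr_l = \{\lambda_1 \lessdom \cdots \lessdom \lambda_{k_l}\}$ with $k_l = |\dr_l| \ge 1$; the set is nonempty because it contains its unique $p$-singular partition $\lambda_1$. Conjugation is the order-reversing involution $\lambda_i \mapsto \lambda_{k_l+1-i}$, so $\lambda_i$ is self-conjugate if and only if $i = k_l+1-i$, that is, if and only if $k_l$ is odd, in which case $\lambda_{(k_l+1)/2}$ is the unique self-conjugate partition. The Mullineux map is the shifted order-reversing involution $\lambda_i \mapsto \lambda_{k_l+2-i}$ on $\dreg_l = \{\lambda_2, \dots, \lambda_{k_l}\}$, so $\lambda_i$ is self-Mullineux if and only if $i = k_l+2-i$, that is, if and only if $k_l$ is even, in which case $\lambda_{k_l/2+1}$ is the unique self-Mullineux partition. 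A quick check confirms that when $k_l$ is even the index $k_l/2+1$ lies in $\{2,\dots,k_l\}$, so this partition is genuinely $p$-regular and distinct from the excluded $\lambda_1$.

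Since $k_l$ is either odd or even but never both, exactly one of the two kinds of fixed point occurs: a self-conjugate partition when $k_l$ is odd and a self-Mullineux partition when $k_l$ is even, which is precisely the dichotomy asserted. This argument is entirely routine; the genuine content — the two involution formulas and the total order on $\dr_l$ — has already been supplied by the preceding discussion, so I do not expect any real obstacle beyond carefully matching each parity condition to the correct involution.
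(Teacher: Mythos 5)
Your proof is correct and follows essentially the same route as the paper: both arguments read off the fixed points of the order-reversing involutions $\lambda_i \mapsto \lambda_{k_l+1-i}$ (conjugation on $\dr_l$) and $\lambda_i \mapsto \lambda_{k_l+2-i}$ (Mullineux on $\dreg_l$) established in the preceding discussion, and reduce everything to the parity of $k_l$, with the sign-swapping on $\dr_0^{\pm}$ handling the case $l=0$. Your version merely makes the index computations more explicit than the paper does.
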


\begin{proof}
From the discussion above describing conjugation in $\dr_l$ and the Mullineux map in $\dreg_l$, we see that none of these involutions has fixpoints in $\dr_0$. For $l\geq 1$, on the other hand, we can see that if $k_l=|\dr_l|$ is even, then conjugation defines pairs $(\lambda, \lambda')$ with $\lambda \neq \lambda'$. In this case $|\dreg_l|=k_l-1$ is then odd, and the Mullineux map has a unique fixpoint: the partition in the middle of the list $\lambda_2, \lambda_3,\ldots, \lambda_{k_l}$. 
The contrary occurs when $k_l$ is odd.
\end{proof}

\begin{example} \label{ex:5core5}
We continue Example \ref{ex:5core4} to illustrate this lemma. The block $\Bgot_\gamma$ contains exactly two self-conjugate partitions: $(6, 3, 2, 1^3)=\lr{1,3}$ and $(7, 2, 1^5)=\lr{0,4}$; and two self-Mullineux partitions: $(6, 3^1, 1^2)=\fn{1,3}$ and $(7,2^2, 1^3)=\fn{0,4}$. The cardinalities of $\dr_1$, $\dr_2$, $\dr_3$ and $\dr_4$ are respectively $6$, $5$, $2$ and $1$. The two self-conjugate partitions are respectively in $\dr_2$ and $\dr_4$ and the two self-Mullineux partitions are respectively in $\dr_1$ and $\dr_3$.
\demo
\end{example}

\begin{remark}
	The operations of conjugation and the Mullineux map in a self-conjugate block of weight $1$ of $\F_p\Sn$ behave exactly as in each set $\dr_l$ for $1 \leq i \leq p-1$.

	Indeed, let $\mathfrak{D}_\gamma$ be a block of weight $1$ of $\F_p\Sn$. It can be shown that $\mathfrak{D}_\gamma$ contains exactly $p$ partitions $\lambda^0, \lambda^1, \ldots, \lambda^{p-1}$ that can be labelled so that 
	\begin{equation}
	\lambda^0 \ldom \lambda^1 \ldom \cdots \ldom \lambda^{p-1},
	\end{equation} 

	where $\lambda^0$ is the unique $p$-singular partition. Moreover, the decomposition numbers of such a block are well known. See \cite[Exercise 5.10]{mathas}. 
	
	Suppose that $\gamma$ is self-conjugate. From the fact that partitions in this block are totally ordered by the dominance order, we obtain, as for each set $\dr_l$, that ${(\lambda^i)}'=\lambda^{p-i+1}$ for every $1 \leq i \leq p-1$.

	Now, using the fact that the \p regular partitions form a UBS and Proposition \ref{fact:mulli}, from the form of the decomposition matrix of $\mathfrak{D}_\gamma$, it can be shown that $\m(\lambda^i)=\lambda^{p-i}$ for $1\leq i \leq p-1$. If $\gamma$ is not self-conjugate, $\m(\lambda^i)$ is the $(p-i)$-th partition of $\mathfrak{D}_{\gamma'}$.

	Recall, from \S \ref{sec:partitions}, that in the abacus display for a partition, sliding down one bead in a runner amounts to adding a \p rim-hook to the Young diagram. The labelling $\lambda^i$ for partitions in $\mathfrak{D}_\gamma$ is compatible with the chosen labelling for the runners: the partition $\lambda^i$ is obtained by sliding down one bead in the runner $i$ in the abacus display for $\gamma$. Hence, the \p rim-hook has arm-length equal to $i$ (leg length $(p-1)-i$). This allows to easily describe the Mullineux map for partitions with weight $1$ as follows: if $\lambda$ is a partition with weight $1$ and \p core $\gamma$, whose \p rim-hook has arm-length $i$, then $\m(\lambda)$ is the unique weight $1$ partition with \p core $\gamma'$ and \p rim-hook with arm-length $p-i$.
	
	\demo
\end{remark}

%-------------------------------------------------------------------------------------------------------

\subsubsection{Dominance order in the pyramid} 
In this section we study how we can compare $p$-regular partitions in $\Bgot_\gamma$ with respect to $\lessdom$ depending on their positions in the pyramid of $\gamma$. This allows to improve what we already know about the distribution of the sets $\dreg_l$ in the pyramid (Corollary \ref{cor:richardscorresp}), and to identify self-Mullineux partitions in the pyramid.

Let $(\li i \gamma_j)_{i j}$ be the pyramid of $\gamma$. We identify the entry $\li i \gamma_j$ with the corresponding $p$-regular partition $\fn{i,j}$ in $\Bgot_\gamma$. 

\begin{proposition}\label{prop:dominancepyramid} Let $1\leq i \leq j < p$ and $i<p-1$. Then $\fn{i-1,j}\lessdom \fn{i-1,j+1}$ and $\fn{i-1,j}\lessdom \fn{i,j}$. In the correspondence with the pyramid array, graphically, we have the local configuration
		\[
		\begin{array}{ccc}
		&&\fn{i-1,j+1} \\		
		&\lessup & \\
		\fn{i-1,j} && \\
		&\lesdow & \\
		&& \fn{i,j}
		\end{array}
		\]
\end{proposition}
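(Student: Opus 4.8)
The plan is to move both dominance relations onto the abacus of $\gamma$ and verify them with the majorization criterion for beta-numbers. Fix the abacus of $\gamma$ with $N$ beads ($N$ a multiple of $p$), and for $\lambda \in \Bgot_\gamma$ write $\beta^\lambda_1 > \beta^\lambda_2 > \cdots > \beta^\lambda_N$ for its beta-numbers in this display. Since $\sum_{t=1}^k \lambda_t = \sum_{t=1}^k \beta^\lambda_t + \binom{k+1}{2}$ and all partitions of $\Bgot_\gamma$ are displayed with the same $N$ beads and the same beta-sum, we get, for $\lambda,\mu \in \Bgot_\gamma$,
\[
\lambda \lessdom \mu \iff \sum_{t=1}^k \beta^\lambda_t \le \sum_{t=1}^k \beta^\mu_t \ \text{ for all } k,
\]
i.e. $\lambda \lessdom \mu$ exactly when the beta-multiset of $\mu$ majorizes that of $\lambda$. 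The elementary move I use repeatedly is: if the beta-multiset of $\mu$ is obtained from that of $\lambda$ by deleting a pair $\{x,y\}$ and inserting a pair $\{x',y'\}$ of the same sum with $x' \le \min\{x,y\}$ (hence $y' \ge \max\{x,y\}$), all other beads fixed, then $\mu$ majorizes $\lambda$, so $\lambda \lessdom \mu$.

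First I would record each relevant entry in $\lr{\cdot}$-form as a concrete change to the beta-set of $\gamma$: $\lr{a,b}$ deletes $\{\rho_a,\rho_b\}$ and inserts $\{\rho_a+p,\rho_b+p\}$; $\lr{a}$ deletes $\{\rho_a\}$ and inserts $\{\rho_a+2p\}$; $\lr{a^2}$ deletes $\{\rho_a-p\}$ and inserts $\{\rho_a+p\}$. Which case of the definition of $\fn{\cdot}$ applies is pinned down by the monotonicity of the pyramid (for fixed $j$, $\li i\gamma_j$ is nondecreasing in $i$; for fixed $i$ it is nonincreasing in $j$): the triple $(\li{i+1}\gamma_j,\li i\gamma_j,\li i\gamma_{j+1})$ is nonincreasing, so it is one of $(0,0,0),(1,0,0),(1,1,0),(1,1,1)$, giving $\fn{i,j}=\lr{i+1,j},\lr{j^2},\lr{i},\lr{i,j+1}$ respectively; similarly $\fn{i-1,j}$ is governed by $(\li i\gamma_j,\li{i-1}\gamma_j,\li{i-1}\gamma_{j+1})$ and $\fn{i-1,j+1}$ by $(\li i\gamma_{j+1},\li{i-1}\gamma_{j+1},\li{i-1}\gamma_{j+2})$. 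The point is that these \emph{same} pyramid entries control the relative order of the shifted positions $\rho_a\pm p$ in the beta-sets (for instance $\li i\gamma_j=1$ means $\rho_j-\rho_i<p$, while $\li i\gamma_j=0$ means $\rho_j-\rho_i>p$), which is exactly the data needed to sort the beta-numbers.

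For $\fn{i-1,j}\lessdom\fn{i,j}$ I would then run through the compatible pairs of cases. When $\li i\gamma_j=\li{i+1}\gamma_j=0$ both partitions are of the first type and the comparison is $\lr{i,j}\lessdom\lr{i+1,j}$; cancelling the common move $\rho_j\mapsto\rho_j+p$ leaves $\rho_i\mapsto\rho_i+p$ against $\rho_{i+1}\mapsto\rho_{i+1}+p$ with $\rho_i<\rho_{i+1}$, a single spreading move. When $\li i\gamma_j=0$ but $\li{i+1}\gamma_j=1$ one compares $\lr{i,j}$ with $\lr{j^2}$: the beta-set of $\lr{j^2}$ is obtained from that of $\lr{i,j}$ by deleting $\{\rho_i+p,\rho_j-p\}$ and inserting $\{\rho_i,\rho_j\}$, and since $\rho_j-\rho_i>p$ gives $\rho_i<\rho_j-p$, the inserted element $\rho_i$ is smaller than both deleted positions, so (equal sums) $\{\rho_i,\rho_j\}$ majorizes $\{\rho_i+p,\rho_j-p\}$ and $\lr{i,j}\lessdom\lr{j^2}$. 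The remaining cases ($\li i\gamma_j=1$, with $\lr{j^2},\lr{i},\lr{i,j+1}$ on one side and $\lr{i},\lr{i-1},\lr{i-1,j+1}$ on the other) are treated identically: write the symmetric difference of the two beta-sets, use the pyramid inequalities to sort the affected positions $\rho_a\pm p$, and check the partial-sum inequalities. The relation $\fn{i-1,j}\lessdom\fn{i-1,j+1}$ is entirely analogous, the shared index now being $i-1$ and the controlling entries lying one step to the right; alternatively it can be derived from the first relation via conjugation, which reverses $\lessdom$ and acts on the pyramid by a reflection.

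The main obstacle is the bookkeeping, not any single idea. In the cases where the two beta-sets differ in three positions rather than two (for example $\lr{j^2}$ versus $\lr{i,j+1}$) a single spreading move no longer suffices and one must verify the whole chain of partial-sum inequalities after correctly interleaving the positions $\rho_a\pm p$; that interleaving is available only after translating each pyramid entry into the corresponding inequality $\rho_b-\rho_a<p$ or $>p$. Confirming that the table of compatible case-pairs is genuinely exhausted, and that degenerate indices (such as $i+1=j$, or $j+1=p$) never produce ill-formed $\lr{\cdot}$-labels, is what makes the argument lengthy, even though each individual comparison is a short finite computation.
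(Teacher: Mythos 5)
Your route is genuinely different from the paper's. The paper does not compute anything on the abacus: it translates the $\fn{\cdot}$-labels into Richards' pair notation $\{s,t\}$ and invokes his Lemma 4.4, which characterises dominance in the block as componentwise comparison of the pairs, so the two relations become one-line checks. Your approach replaces that imported characterisation by a self-contained argument: the standard equivalence between $\lessdom$ and majorization of beta-sets (correct as you state it, since all partitions of $\Bgot_\gamma$ share the same bead count and beta-sum), plus the correct dictionary $\lr{a,b}\colon \{\rho_a,\rho_b\}\mapsto\{\rho_a+p,\rho_b+p\}$, $\lr{a}\colon \rho_a\mapsto\rho_a+2p$, $\lr{a^2}\colon \rho_a-p\mapsto\rho_a+p$, and the observation that the pyramid entries are exactly the inequalities $\rho_b-\rho_a\lessgtr p$ needed to sort the shifted positions. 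The two cases you work out are right, and I checked that a representative three-position case ($\lr{j^2}$ versus $\lr{i,j+1}$) does satisfy the full chain of partial-sum inequalities, so the method goes through. What your argument buys is independence from Richards' $\{s,t\}$ machinery; what it costs is the long case table that the paper avoids entirely.

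There is, however, one concrete hole in your case enumeration. Your uniform rule ``the triple $(\li{i+1}\gamma_j,\li i\gamma_j,\li i\gamma_{j+1})$ determines $\fn{i,j}$'' is only the $i\neq j$ branch of the definition. The proposition allows $i=j$, and then $\fn{i,j}=\fn{j,j}$ is $\lr{j+1}$ or $\lr{j+1,j+2}$ according to $\li{j+1}\gamma_{j+2}$ --- labels that never appear in your table, so the comparisons $\fn{j-1,j}\lessdom\fn{j,j}$ are simply absent from your case list. They succumb to exactly the same beta-set computation (e.g. $\lr{j-1,j+1}$ versus $\lr{j+1,j+2}$ reduces to a single spreading move after cancelling the common shift of $\rho_{j+1}$), but they must be added. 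Two smaller points: in your list of ``remaining cases'' the sides are garbled --- when $\li i\gamma_j=1$ one has $\fn{i,j}\in\{\lr{i},\lr{i,j+1}\}$ and $\fn{i-1,j}\in\{\lr{j^2},\lr{i-1},\lr{i-1,j+1}\}$, so $\lr{j^2}$ belongs to the $\fn{i-1,j}$ side, not the $\fn{i,j}$ side; and the proposed shortcut ``derive the second relation from the first by conjugation'' is doubtful, because conjugation does not preserve $p$-regularity and hence does not act on $\fn{\cdot}$-labels in any direct way (only the $\lr{\cdot}$-labels transform cleanly), so you should rely on the direct analogous computation instead.
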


\begin{proof} In \cite{richards}, Richards introduces a notation for partitions in the block, which depends on the entries of the pyramid. With this notation he obtains a complete description of the dominance order $\lessdom$ in the block, see \cite[Lemma 4.4]{richards}. We translate Richards' notation in the notation $\fn{\cdot}$. Every partition in the block corresponds to a pair written $\{s,t\}$ for some $0 \leq s < t \leq 2p$. We do not explain here how to associate a partition to $\{s,t\}$ or viceversa. Lemma \cite[Lemma 4.4]{richards} says

\begin{equation}\label{eq:domrich}
\{s,t\} \domleq \{s',t'\} \quad \text{if and only if} \quad s\leq s'\quad \text{and}\quad t \leq t'.
\end{equation}

Using this, our assertion is easily verified.
\end{proof}

This proposition implies that the distribution of the sets $\dreg_l$ in the pyramid, as we know it from Corollary \ref{cor:richardscorresp}, does not only correspond to some entries of the pyramid as a set, but this distribution also agrees with the dominance order, which then is increasing from left to right in the pyramid.

Furthermore, in our case, since $\gamma$ is self-conjugate then, by construction the pyramid is horizontally symmetrical. Hence, each set $\dreg_l$ occurs in symmetrical positions (is equally distributed with respect to the middle of the pyramid). Then, any self-Mullineux partition $\mu \in \dr_l$ (some $1\leq l \leq p-1$), being in the middle of the list
\[
\lambda_2 \lessdom \cdots \lessdom \lambda_l,
\]
occurs in the middle of the pyramid. And any entry in the middle of the pyramid corresponds, reciprocally to a self-Mullineux partition. We state this fact in the following corollary

\begin{corollary}\label{coro:selfm}
 Let $(\li i \gamma_j)_{i j}$ be the pyramid of $\gamma$. The self-Mullineux partitions in $\Bgot_\gamma$ correspond exactly to the entries on the middle column of the pyramid, except for the one on row $0$. In $\fn{\cdot}$-notation, these partition are:
\[
\mu_k= \left\lceil \frac{p-1}{2}-k,\frac{p-1}{2}+k \right\rfloor \qquad \text{for} \qquad 1\leq k \leq \frac{p-1}{2}.
\]
Moreover $\mu_k \in \dr_{2k-1}\cup\dr_{2k}.$
\end{corollary}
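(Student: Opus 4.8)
The plan is to transport the description of the Mullineux map on each $\dr_l$ obtained in \S\ref{sec:conjugat} into the pyramid via the dominance comparisons of Proposition \ref{prop:dominancepyramid}, and then to read the fixed points directly off the horizontal symmetry of the pyramid. First I would record that symmetry precisely: since $\gamma$ is self-conjugate, the abacus symmetry established above (opposite runners are mutual reverses, and the runner labels satisfy $r_i = p-1-r_{p-1-i}$) gives, from the definition $\li i \gamma_j = 1 \iff \rho_j-\rho_i<p$, the relation $\li i \gamma_j = \li{p-1-j}\gamma_{p-1-i}$ for all $i\leq j$. This reflection sends the entry $\li i \gamma_j$ to $\li{p-1-j}\gamma_{p-1-i}$, which lies on the same row $j-i$ and carries the same value; hence by Corollary \ref{cor:richardscorresp} it maps each $\dreg_l$ onto itself, so every $\dreg_l$ is symmetric about the central column $i+j=p-1$ of the pyramid.

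Next I would order $\dreg_l$. By Proposition \ref{prop:richardsdrond} the set $\dreg_l$ is a $\lessdom$-chain, and Proposition \ref{prop:dominancepyramid} shows that $\lessdom$ strictly increases as one moves to the right in the pyramid; thus this chain is just the left-to-right reading of its entries, and by the previous paragraph it is symmetric about the central column. Now the central column meets the pyramid only on the rows $p-1,p-3,\dots,2,0$, all even because $p$ is odd, so of the two rows $\{l,l+1\}$ occupied by $\dreg_l$ exactly one can carry a central entry, and $\dreg_l$ has at most one such entry. Consequently $|\dreg_l|$ is odd exactly when $\dreg_l$ possesses a (unique) central entry, which is then its median. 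On the other hand the computation of \S\ref{sec:conjugat} shows $\m$ acts on $\dreg_l$ as the order-reversing involution $\lambda_i\mapsto\lambda_{k_l-i+2}$, whose fixed point (when $k_l$ is even) is precisely that median; combining this with Lemma \ref{lemma:drblocks} (which says $\dr_l$ carries a self-Mullineux partition iff $|\dr_l|$, equivalently $|\dreg_l|+1$, is even, and that $\dr_0$ carries none) identifies the self-Mullineux partitions with the central entries lying on rows $1,\dots,p-1$, i.e.\ the central column minus its row-$0$ cell. I expect the main obstacle to be exactly this bookkeeping: one must check that the central column meets each $\dreg_l$ in at most one cell, and that this single cell is simultaneously the dominance-median and the Mullineux fixed point, invoking the parity dichotomy of Lemma \ref{lemma:drblocks} to guarantee the fixed point exists in precisely the cases where a central cell occurs. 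Running this through in the converse direction — starting from a central cell on a positive row, noting it is a genuine $p$-regular partition hence the unique central, and so median, element of its $\dreg_l$ — gives the bijection between central cells on rows $\geq 1$ and self-Mullineux partitions, matching the count $\tfrac{p-1}{2}$ from Lemma \ref{lem:evenweight} with $w=2$.

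Finally I would compute coordinates. The central cell on row $2k$ is $\li{\frac{p-1}{2}-k}\gamma_{\frac{p-1}{2}+k}$, so under the bijection (\ref{eq:bijpyramid}) it is $\mu_k=\fn{\frac{p-1}{2}-k,\frac{p-1}{2}+k}$, and letting $k$ run over $1,\dots,\tfrac{p-1}{2}$ sweeps out exactly the rows $2,4,\dots,p-1$. For the last claim, Proposition \ref{prop:drond} gives $\dr\mu_k = (j-i)-1+\li i \gamma_j = 2k-1+\li{\frac{p-1}{2}-k}\gamma_{\frac{p-1}{2}+k}$, which is $2k-1$ or $2k$ according as the central entry is $0$ or $1$; hence $\mu_k\in\dr_{2k-1}\cup\dr_{2k}$, as asserted.
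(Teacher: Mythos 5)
Your proof is correct and follows essentially the same route as the paper: the horizontal symmetry of the pyramid forces each $\dreg_l$ to be symmetric about the central column, dominance increases left to right by Proposition \ref{prop:dominancepyramid}, so the Mullineux fixed point (the median of the chain, when it exists) must be the unique central entry, and the coordinates and the claim $\mu_k\in\dr_{2k-1}\cup\dr_{2k}$ then follow from the bijection (\ref{eq:bijpyramid}) and Proposition \ref{prop:drond}. The only difference is one of detail: you make explicit the relation $\li i \gamma_j = \li{p-1-j}\gamma_{p-1-i}$ and the parity bookkeeping on which rows meet the central column, which the paper leaves implicit in the paragraph preceding the corollary.
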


\begin{proof}
There is nothing to prove for the first affirmation: indeed, the indices $i=\frac{p-1}{2}-k$ and $j=\frac{p-1}{2}-k$ for $1\leq k \leq \frac{p-1}{2}$ are those exactly in the middle column of the pyramid by construction.
Let us see that $\mu_k \in \dr_{2k-1}\cup\dr_{2k}.$ Since $\mu_k= \left\lceil \frac{p-1}{2}-k,\frac{p-1}{2}+k \right\rfloor$, then $\mu_k$ is in row $\left( \frac{p-1}{2}+k \right)-\left( \frac{p-1}{2}-k \right)=2k$. If $\li{\frac{p-1}{2}+k} \gamma_{\frac{p-1}{2}-k}=0$, then by Corollary \ref{cor:richardscorresp}, $\mu_k \in \dreg_{2k-1}$, otherwise  $\mu_k \in \dreg_{2k}$.
\end{proof}

\begin{example}\label{ex:5core7}
From the last two results, we can add new information to the diagram in Example \ref{ex:5core4}, namely (some) dominance order relations, and we can also highlight the entries corresponding to the self-Mullineux partitions in $\Bgot_{(2,2)}$:
\begin{center}
\includegraphics[scale=0.6]{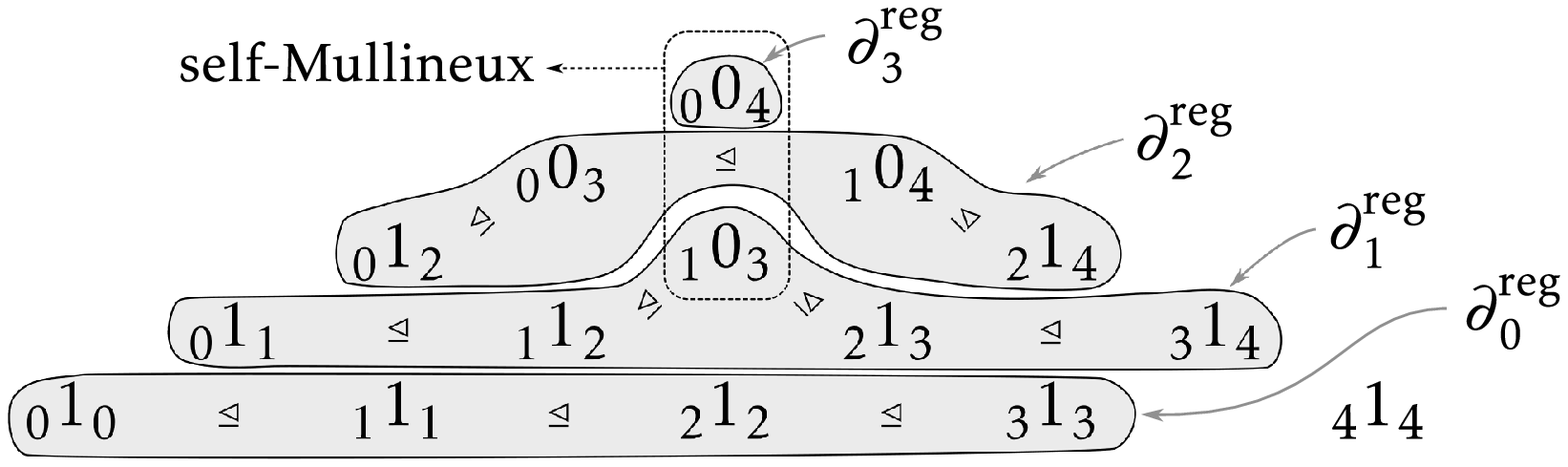}
\end{center}
\demo
\end{example}

%-----------------------------------------------------------------------------------

\subsubsection{Bijection between self-conjugate and self-Mullineux partitions} Here we state the natural correspondance between self-conjugate and self-Mullineux partitions in the block $\Bgot_\gamma$ given by the chosen notations.

A direct consequence of Lemmas \ref{lem:labelselfc}, \ref{lem:labelselfconj}, \ref{lemma:drblocks} and Corollary \ref{coro:selfm} is the following:

\begin{corollary}\label{coro:corresp}
There is a natural one-to-one correspondence between the set of self-conjugate partitions and the set of self-Mullineux partitions in $\Bgot_\gamma$ given by, for $1 \leq k \leq \frac{p-1}{2}$, 
\[
\nu_k=\Lr{\frac{p-1}{2}-k,\frac{p-1}{2}+k} \quad \longleftrightarrow \quad \mu_k=\FN{\frac{p-1}{2}-k,\frac{p-1}{2}+k}.
\]
The partitions $\mu_k$ and $\nu_k$ are the unique self-Mullineux and self-conjugate partitions, respectively, in the set $\dr_{2k-1} \cup \dr_{2k}$.
\end{corollary}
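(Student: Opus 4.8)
The plan is to assemble the four cited results; no genuinely new computation is required beyond checking that the two index sets coincide and that a disjointness argument upgrades the two separate enumerations into a single indexed correspondence with the stated uniqueness.

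First I would record the two enumerations. By Lemma \ref{lem:labelselfc} the self-conjugate partitions in $\Bgot_\gamma$ are exactly the $\tfrac{p-1}{2}$ partitions $\nu_k=\Lr{\frac{p-1}{2}-k,\frac{p-1}{2}+k}$ for $1\leq k\leq \tfrac{p-1}{2}$, and by Corollary \ref{coro:selfm} the self-Mullineux partitions are exactly the $\tfrac{p-1}{2}$ partitions $\mu_k=\FN{\frac{p-1}{2}-k,\frac{p-1}{2}+k}$ for $1\leq k\leq \tfrac{p-1}{2}$. Both families are parametrised by the same set $\{1,\dots,\tfrac{p-1}{2}\}$, and in fact by the same pair of runner indices $(\tfrac{p-1}{2}-k,\tfrac{p-1}{2}+k)$, read once in the $\lr{\cdot}$ notation and once in the $\fn{\cdot}$ notation. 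Declaring $\nu_k\mapsto\mu_k$ therefore defines a bijection between the two sets, which is precisely the asserted correspondence.

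For the uniqueness clause I would invoke the $\dr$-grading. By Lemma \ref{lem:labelselfconj} we have $\nu_k\in\dr_{2k-1}\cup\dr_{2k}$, and by the second assertion of Corollary \ref{coro:selfm} we have $\mu_k\in\dr_{2k-1}\cup\dr_{2k}$. The key bookkeeping observation is that, as $k$ runs over $1,\dots,\tfrac{p-1}{2}$, the index pairs $\{2k-1,2k\}$ run over $\{1,2\},\{3,4\},\dots,\{p-2,p-1\}$, partitioning $\{1,\dots,p-1\}$ into consecutive pairs; hence the unions $\dr_{2k-1}\cup\dr_{2k}$ are pairwise disjoint. Since every self-conjugate partition is some $\nu_j$ and every self-Mullineux partition is some $\mu_j$, and $\nu_j,\mu_j$ both live in $\dr_{2j-1}\cup\dr_{2j}$, no self-conjugate (resp.\ self-Mullineux) partition other than $\nu_k$ (resp.\ $\mu_k$) can lie in $\dr_{2k-1}\cup\dr_{2k}$. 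This yields the uniqueness statement.

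Finally, Lemma \ref{lemma:drblocks} dovetails with this picture and can be used to locate each fixpoint precisely: each $\dr_l$ with $1\leq l\leq p-1$ carries exactly one fixpoint, of a single type, so inside $\dr_{2k-1}\cup\dr_{2k}$ the self-conjugate $\nu_k$ and the self-Mullineux $\mu_k$ must occupy different graded pieces, consistent with the fact that both memberships are governed by the single pyramid entry $\li{\frac{p-1}{2}-k}\gamma_{\frac{p-1}{2}+k}$ (and in opposite ways). I do not expect any real obstacle here: the only point needing care is the verification that the unions $\dr_{2k-1}\cup\dr_{2k}$ are disjoint and jointly exhaust $\dr_1,\dots,\dr_{p-1}$, which is exactly what converts the two independent enumerations of Lemma \ref{lem:labelselfc} and Corollary \ref{coro:selfm} into a well-defined, uniqueness-carrying bijection.
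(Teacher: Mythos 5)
Your proposal is correct and matches the paper's intent: the paper offers no written proof, stating the corollary as a direct consequence of Lemmas \ref{lem:labelselfc}, \ref{lem:labelselfconj}, \ref{lemma:drblocks} and Corollary \ref{coro:selfm}, which are exactly the ingredients you assemble. Your disjointness observation for the sets $\dr_{2k-1}\cup\dr_{2k}$ (immediate since the $\dr_l$ partition $\Bgot_\gamma$ by the value of $\dr$) is precisely what is needed to turn the two enumerations into the uniqueness claim.
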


\begin{example}\label{ex:5core6}
We continue Example \ref{ex:5core5}. We have $\{\nu_1,\mu_1\} \subseteq \dr_1 \cup \dr_2$, where $\nu_1=\lr{1,3}=(6, 3, 2, 1^3)$ and $\mu_1=\fn{1,3}=(6, 3^1, 1^2)$; and $\{\nu_2,\mu_2\} \subseteq \dr_3 \cup \dr_4$, where $\nu_2=\lr{0,4}=(7, 2, 1^5)$ and $\mu_2=\fn{0,4}=(7,2^2, 1^3)$. \demo
\end{example}

\begin{remark}
In \cite{bernal}, there is a combinatorial bijection between two families of partitions, depending on an odd prime $p$, of a positive integer $n$; the set of $p$-self-Mullineux partitions of $n$, denoted $\mathcal{M}_p^n$ and the \emph{$p$-BG-partitions} of $n$, denoted $\text{BG}_p^n$. In particular, partitions in $\text{BG}_p^n$ are self-conjugate. This bijection is described combinatorially with the Young diagrams and with some particular arrays of integers. 
From some combinatorial and algebraic observations, it is easy to see that the two sets $\mathcal{M}_p^n$ and $\text{BG}_p^n$ have the same cardinalities. Now, from \cite[Proposition 6.1]{brunatgramain}, it can be seen that for any block $\Bgot$ of $\F_p\Sn$, we have $|\mathcal{M}_p^n \cap \Bgot|=|\text{BG}_p^n \cap \Bgot|$. In words, when restricting to blocks, we also have that the number of $p$-self-Mullineux partitions is equal to the number of $p$-BG-partitions. In a block of weight $2$, the set of $p$-BG-partitions is equal to the set of self-conjugate partitions. On the other hand, it can be shown that the bijection in \cite{bernal} restricts to each block, that is, preserves the $p$-core of a partition. Hence, a question is whether this bijection coincides with the correspondence in Corollary \ref{coro:corresp}. 
We have checked using GAP \cite{gap} that this is indeed the case for $n \leq 56$.
\end{remark}

%----------------------------------------------------------------------------------------

\subsubsection{Two lemmas} In this section we state two technical lemmas which are important for the proof of Theorem \ref{th:main}.

We know more or less to which sets $\dr_l$ the self-conjugate and self-Mullineux partitions in $\Bgot_\gamma$ belong; indeed, for $1\leq l \leq \frac{p-1}{2}$, we know that the pair of partitions $\mu_k$ and $\nu_k$ are in $\dr_{2k-1} \cup \dr_{2k}$, see Corollary \ref{coro:corresp}. The information encoded in the pyramid of the block $\Bgot_\gamma$ allows us to be more precise. For this we state the following lemma.

Consider the set $\mu_1, \mu_2, \ldots,\mu_{\frac{p-1}{2}}$ of self-Mullineux partitions in $\Bgot_\gamma$. The entries of the pyramid corresponding to partitions $\mu_1, \mu_2, \ldots,\mu_{\frac{p-1}{2}}$ are $g_k:=\li i \gamma_j$ with $i=\frac{p-1}{2}-k$ and $j=\frac{p-1}{2}-k$, for $1\leq k \leq \frac{p-1}{2}$ (Corollary \ref{coro:selfm}). Such entries are in the middle column of the pyramid. When $k$ runs from $1$ to $\frac{p-1}{2}$, these entries run from bottom to top in the pyramid, and because of the definition and properties of the pyramid, the sequence $g_1, g_2, \ldots,g_{\frac{p-1}{2}}$ is of one the forms $1, 1, \ldots, 1, 0, 0, \ldots, 0$, or $1, 1, \ldots, 1$, or else $0,0,\ldots,0$.
\medskip

\begin{definition}\label{def:delta} In the notation from the preceding paragraph, define $\delta=\delta(\gamma)$ as 
\[ \delta =
\begin{cases*}
0 & if $g_k=0$ for all $1\leq k \leq \frac{p-1}{2}$ ,\\
\textup{max}\ \{k \mid g_k=1\} & otherwise.
\end{cases*}
\]
%i & if $g_k=1$ for all $1 \leq k \leq i$.
\end{definition} 

\medskip

\begin{lemma}\label{lemma:drondb}
For $k=1, \ldots, \frac{p-1}{2}$ we have:
\begin{itemize}
\item If $k \leq \delta$, then $\mu_k \in \dr_{2k}$ and  $\nu_k \in \dr_{2k-1}$;
\item if $k > \delta$, then $\mu_k \in \dr_{2k-1}$ and $\nu_k \in \dr_{2k}$.
\end{itemize}
In a table:
\[
\begin{array}{ccc}
\nu_\frac{p-1}{2} & \in & \dr_{p-1} \\
\mu_\frac{p-1}{2} & \in & \dr_{p-2} \\
\hline
&  \vdots &\\
\hline
\nu_{\delta+1} & \in & \dr_{2(\delta+1)} \\
\mu_{\delta+1} & \in & \dr_{2(\delta+1)-1} \\
\hline
\mu_\delta & \in & \dr_{2\delta} \\
\nu_\delta & \in & \dr_{2\delta-1} \\
\hline
&  \vdots &\\
\hline
\mu_2 & \in & \dr_4 \\
\nu_2 & \in & \dr_3 \\
\hline
\mu_1 & \in & \dr_2 \\
\nu_1 & \in & \dr_1
\end{array}
\]
\end{lemma}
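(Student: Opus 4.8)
The plan is to read off both membership statements directly from the single pyramid entry on the middle column that governs the pair $(\mu_k,\nu_k)$, namely $g_k = \li{\frac{p-1}{2}-k}\gamma_{\frac{p-1}{2}+k}$. First I would record the only genuinely combinatorial input, which is already packaged in Definition \ref{def:delta}: by the triangle-of-$1$s property of the pyramid (a $1$ forces $1$s in the two entries immediately below it, so along the middle column the values form an initial block of $1$s followed by a block of $0$s as $k$ increases from $1$), we have $g_k = 1$ precisely when $k \leq \delta$ and $g_k = 0$ precisely when $k > \delta$. This holds in all three possible shapes of the sequence $g_1,\dots,g_{\frac{p-1}{2}}$, including the degenerate all-$0$s case where $\delta = 0$.

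Next I would express the two $\dr$-values in terms of $g_k$. For the self-Mullineux partition $\mu_k = \FN{\frac{p-1}{2}-k,\frac{p-1}{2}+k}$, Proposition \ref{prop:drond} (equivalently the computation inside Corollary \ref{coro:selfm}) with $i=\frac{p-1}{2}-k$, $j=\frac{p-1}{2}+k$ gives $\dr \mu_k = (j-i-1) + g_k = (2k-1) + g_k$, so $\mu_k \in \dr_{2k}$ when $g_k = 1$ and $\mu_k \in \dr_{2k-1}$ when $g_k = 0$. For the self-conjugate partition $\nu_k = \Lr{\frac{p-1}{2}-k,\frac{p-1}{2}+k}$, the case analysis in the proof of Lemma \ref{lem:labelselfconj} supplies the \emph{opposite} dependence on the same entry: $\dr \nu_k = 2k-1$ when $g_k = 1$ and $\dr \nu_k = 2k$ when $g_k = 0$.

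Combining these computations with the characterisation of $g_k$ in terms of $\delta$ then yields the two bulleted cases directly: for $k \leq \delta$ we have $g_k = 1$, hence $\mu_k \in \dr_{2k}$ and $\nu_k \in \dr_{2k-1}$; for $k > \delta$ we have $g_k = 0$, hence $\mu_k \in \dr_{2k-1}$ and $\nu_k \in \dr_{2k}$. The table is simply these assignments listed for $k$ running from $1$ to $\frac{p-1}{2}$ and split at the threshold $k = \delta$.

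I expect no serious obstacle, since all the analytic content is already present in Proposition \ref{prop:drond} and Lemma \ref{lem:labelselfconj}; the only point requiring care is the observation that the same entry $g_k$ pushes $\mu_k$ and $\nu_k$ in opposite directions inside $\dr_{2k-1}\cup\dr_{2k}$. This opposite behaviour is precisely why the two partitions never collide (consistent with Corollary \ref{coro:corresp}, which already places both in $\dr_{2k-1}\cup\dr_{2k}$), and it is what makes their roles swap exactly at $\delta$. The one thing I would verify carefully is the sign conventions in the two source formulas, to confirm the dependence on $g_k$ is genuinely reversed and not accidentally aligned.
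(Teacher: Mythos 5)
Your proof is correct and follows essentially the same route as the paper: both arguments reduce the lemma to the value of the middle-column entry $g_k$, which equals $1$ exactly when $k\le\delta$ by the triangle-of-$1$s property, and then read off $\dr\mu_k=(2k-1)+g_k$ from Proposition \ref{prop:drond} (the paper phrases this via rows and Corollary \ref{cor:richardscorresp}, which is the same computation). The only, harmless, difference is in how $\nu_k$ is placed: you compute $\dr\nu_k$ directly from the leg-length formula in the proof of Lemma \ref{lem:labelselfconj}, whereas the paper deduces it indirectly from the location of $\mu_k$ together with the facts that $\nu_k\in\dr_{2k-1}\cup\dr_{2k}$ and that no $\dr_l$ contains both a self-conjugate and a self-Mullineux partition (Lemma \ref{lemma:drblocks}); your direct route is, if anything, slightly more self-contained.
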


\begin{example}
We continue Example \ref{ex:5core6}. The pyramid is in Example \ref{ex:5core7}. We see that the entries in the middle column corresponding to self-Mullineux partitions are all equal to $0$. Hence, here $\delta=0$, and we have indeed that $\mu_1 \in \dr_1$ and $\mu_2 \in \dr_3$, as this lemma implies.
\demo
\end{example}

\begin{proof}[Proof of Lemma \ref{lemma:drondb}]
The entries $g_1, g_2, \ldots, g_\frac{p-1}{2}$ are exactly those in the middle column of the pyramid (except that on row $0$ which corresponds to a $p$-regular partition in $\dr_0$), from bottom to top and they correspond to the self-Mullineux partitions on the block. These entries are respectively in rows $2, 4, 6, \ldots, p-1.$

Let $1\leq k \leq \frac{p-1}{2}$. If $k > \delta$, then $g_k=0$. Hence $\mu_k$ which is in row $2k$, belongs to $\dr_{2k-1}$, by Corollary \ref{cor:richardscorresp}. Thus, for $k> \delta$, we have $\nu_k \in \dr_{2k}$.
If $k \leq \delta$, then  $g_k=1$, and since $\mu_k$ is in row $2k$ we know that $\mu_k \in \dr_{2k}$. Then, for $k \leq \delta$ we have $\nu_k \in \dr_{2k-1}$.
\end{proof}

The following is a technical lemma concerning the dominance order in $\Reg{p}{\Bgot_\gamma}$ with respect to positions in the pyramid. This is an adaptation of \cite[Lemma 4.4]{richards}, which is a characterisation of the dominance order in the block to our notation for partitions. This lemma is a key fact in the proof of Theorem \ref{th:main}. 

Let $(\li i \gamma_j)_{i j}$ be the pyramid of the $p$-core $\gamma$. For two entries $\li i \gamma_j$ and $\li k \gamma_l$, we say that $\li i \gamma_j$ \emph{is to the left of} $\li k \gamma_l$ (or, equivalently $\li k \gamma_l$ is to the right of $\li i \gamma_j$) if $\li i \gamma_j$ is in a column of the pyramid to the left of the column of $\li k \gamma_l$. In terms of indices, this is equivalent to $i+j < k+l$.

\begin{lemma}[\text{\cite[Lemma 4.4]{richards}}]\label{lem:pyramorder} Let $\lambda, \tau \in \Reg{p}{\Bgot_\gamma}$ such that $\lambda$ is to the left of $\tau$.  Then, $\lambda \trianglelefteq \tau$ or $\lambda$ and $\tau$ are not comparable for the dominance order; written equivalently as $\tau \ntriangleleft \lambda$.
\end{lemma}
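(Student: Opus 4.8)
The plan is to reduce everything to Richards' characterisation of the dominance order in terms of the pair notation $\{s,t\}$, exactly as was done in the proof of Proposition~\ref{prop:dominancepyramid}. Recall that every $p$-regular partition in $\Bgot_\gamma$ corresponds via $\fn{\cdot}$-notation to an entry $\li i \gamma_j$ of the pyramid, and that Richards associates to each partition a pair $\{s,t\}$ with $0\le s<t\le 2p$, for which the equivalence (\ref{eq:domrich}) holds: $\{s,t\}\domleq\{s',t'\}$ if and only if $s\le s'$ and $t\le t'$. So the statement $\tau\ntriangleleft\lambda$ is equivalent to saying it is \emph{not} the case that both coordinates of the pair of $\tau$ are strictly below both coordinates of the pair of $\lambda$; and what I must show is that when $\lambda$ is strictly to the left of $\tau$ (i.e.\ $i+j<k+l$ for the pyramid indices), this strict double-inequality cannot hold.

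First I would make explicit the translation between the pyramid position of a $p$-regular partition and the coordinates $(s,t)$ of its Richards pair. The key quantitative fact I expect to need is that the sum $s+t$ (or some fixed monotone combination of $s$ and $t$) is controlled by the column index $i+j$ of the entry in the pyramid: moving one step to the right in the pyramid should correspond to increasing $s+t$. Concretely, using Proposition~\ref{prop:drond} one knows $\dr\fn{i,j}=j-i-1+\li i\gamma_j$, and $\dr\lambda=|t-s|$-type data in Richards' setup controls the \emph{difference} $t-s$, while the column index $i+j$ controls the \emph{sum}. Once I have that ``to the left'' means ``strictly smaller sum $s+t$,'' the conclusion is immediate: if $\lambda$ had pair $\{s',t'\}$ and $\tau$ had pair $\{s,t\}$ with $\tau\triangleleft\lambda$, then by (\ref{eq:domrich}) $s\le s'$ and $t\le t'$ with at least one strict, forcing $s+t<s'+t'$, i.e.\ $\tau$ strictly to the left of $\lambda$, contradicting that $\lambda$ is to the left of $\tau$.

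So the structure of the argument is: assume for contradiction that $\tau\triangleleft\lambda$; invoke (\ref{eq:domrich}) to get $s\le s'$, $t\le t'$ not both equalities; deduce $s+t<s'+t'$; and then show this contradicts $i+j<k+l$ (that $\lambda$ is to the left of $\tau$) by exhibiting the monotone relationship between $s+t$ and the column index. The two possibilities $\lambda\domleq\tau$ or incomparability are then exactly the two ways (\ref{eq:domrich}) can fail to give $\tau\triangleleft\lambda$, so the dichotomy in the statement is precisely the negation of the forbidden case.

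The main obstacle will be establishing cleanly the monotonicity ``column index increases $\iff$ $s+t$ increases'' through the several cases of the $\fn{\cdot}$-to-$\lr{\cdot}$-to-$\{s,t\}$ dictionary, since the definition of $\fn{i,j}$ branches on the pyramid entries $\li{i+1}\gamma_j$, $\li i\gamma_{j}$, $\li i\gamma_{j+1}$. I would handle this by checking that in each branch the resulting pair $\{s,t\}$ has sum $s+t$ that is a fixed increasing function of $i+j$ (the local moves already verified in Proposition~\ref{prop:dominancepyramid}, namely $\fn{i-1,j}\lessdom\fn{i-1,j+1}$ and $\fn{i-1,j}\lessdom\fn{i,j}$, both of which strictly increase the column index, give the inductive backbone: every step rightward is a composition of these elementary covering moves, each strictly increasing the sum coordinate). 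Since Proposition~\ref{prop:dominancepyramid} has already been proved using exactly (\ref{eq:domrich}), the cleanest route is to deduce Lemma~\ref{lem:pyramorder} as a formal consequence of the monotonicity of $s+t$ in the column index together with (\ref{eq:domrich}), rather than re-deriving Richards' dictionary from scratch.
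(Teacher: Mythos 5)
Your opening move --- reducing everything to Richards' criterion (\ref{eq:domrich}) --- is exactly the paper's starting point, but the invariant you hang the whole argument on, namely that $s+t$ (or ``some fixed monotone combination of $s$ and $t$'') is a monotone function of the column index $i+j$, is false, and this is a genuine gap rather than a deferred routine check. A concrete obstruction: two distinct entries in the \emph{same} column can be strictly comparable for $\trianglelefteq$. In the running example ($p=5$, $\gamma=(2,2)$, Example \ref{ex:5core2}), $\fn{0,4}=(7,2^2,1^3)$ and $\fn{2,2}=(7^2)$ both sit in column $4$ of the pyramid, yet $(7,2^2,1^3)\triangleleft(7^2)$ (compare partial sums $7,9,11,12,13,14$ with $7,14,\dots$). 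By (\ref{eq:domrich}) their pairs satisfy $s\leq s'$ and $t\leq t'$ with at least one strict, so $s+t<s'+t'$: any combination of $s$ and $t$ that is strictly increasing for $\domleq$ (which is what step one of your argument requires) cannot be constant on columns. You would then need the weaker statement that the ranges of $s+t$ over distinct columns do not interleave, and the paper's own base-case tables contradict this: a single column $c+1$ contains pairs with sum $2(i+j)+5$ (the rows $(2i+2k+4,\,2j-2k+1)$ and $(2i+2k+3,\,2j-2k+2)$) alongside a pair with sum $2(i+j)+10$ (the row $(2i+2k+5,\,2j-2k+5)$), so an entry of column $c+1$ can have a strictly larger pair-sum than an entry of column $c+2$. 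Your fallback justification also fails: the covering moves of Proposition \ref{prop:dominancepyramid} go from $\fn{a,b}$ to $\fn{a,b+1}$ or $\fn{a+1,b}$, i.e.\ they only ever increase both indices, so an entry such as $\fn{a+2,b-1}$ --- one column to the right of $\fn{a,b}$ but lower down --- is \emph{not} reachable by a chain of them; ``every step rightward'' is not a composition of these elementary moves.

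This is precisely why the paper's proof has the shape it does: an induction on the column distance $N$ whose whole difficulty is the base case $N=1$, where $\lambda=\fn{i,j}$ must be compared with \emph{every} entry of the adjacent column. The two entries reachable by a covering move are dispatched by Proposition \ref{prop:dominancepyramid}; the unreachable ones (the sets $A$ and $B$ above and below them) require an explicit case analysis of the pyramid configurations and a verification, pair by pair in Richards' $\{s,t\}$-notation, that $s<s'$ or $t<t'$. The inductive step then only needs Proposition \ref{prop:dominancepyramid} to produce some $\tilde\tau\lessdom\tau$ one column closer to $\lambda$. To repair your argument you must either carry out that base-case table yourself --- at which point you have reproduced the paper's proof --- or exhibit and prove a correct column-detecting invariant of the pairs $\{s,t\}$, which the evidence above indicates does not exist in the form you propose.
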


\begin{proof}
By definition the pyramid of $\gamma$ has $2p-1$ columns. Suppose that $\lambda$ is in column $c$ and $\tau$ is in column $c+N$. The proof is by induction on $N$. The crucial part is the base case. 
Suppose that $N=1$, and let $\lambda=\fn{i,j}$ in $\fn{\cdot}$-notation. Making correspond positions in the pyramid to $p$-regular partitions the local configuration in column $c+1$ is as follows:
\begin{center}

\begin{tikzpicture}
\draw (0,0) node{$\begin{array}{ccc}
c && c+1\\
\hline
&& \vdots \\
&& \fn{i-2,j+3} \\
\\
&& \fn{i-1,j+2} \\
\\
\vdots && \hil{\fn{i,j+1}} \\
\lambda=\fn{i,j} && \\
\vdots && \hil{\fn{i+1,j}} \\
\\
 && \fn{i+2,j-1} \\
\\
&& \fn{i+3,j-2} \\
&& \vdots

\end{array}$};

\draw [decorate,decoration={brace,amplitude=10pt},xshift=0.5cm,yshift=-0.5pt] 
(2,3) --(2,1) node [black,midway,xshift=1cm]
{\footnotesize $A$};

\draw [decorate,decoration={brace,amplitude=10pt},xshift=0.5cm,yshift=-0.5pt] 
(2,-1.5) --(2,-3.5) node [black,midway,xshift=1cm]
{\footnotesize $B$};

\end{tikzpicture}
\end{center}

Now, if $\tau=\fn{i,j+1}$ or $\tau=\fn{i+1,j}$ (highlighted in the diagram above), Proposition \ref{prop:dominancepyramid} says that $\lambda \lessdom \tau$, so that $\tau \ntriangleleft \lambda$. Let us split the rest of the partitions (or entries of the pyramid) in this column in two sets $A$ and $B$, where 
\[
A = \{ \fn{i-k, j+k+1} \ \mid\  k\geq 1\},
\]
and 
\[
B= \{ \fn{i+k+1, j-k} \ \mid\  k\geq 1\},
\]
where $k$ takes values such that partitions in $A$ and $B$ lie in the pyramid. Consider the two possible cases $\li i \gamma_j=0$ or $\li i \gamma_j=1$. In the first case, we necessarily have $\li i \gamma_{j+1}=0$, as well as all the entries in $A$. In the second case $\li{i+1} \gamma_{j}=1$, as well as all the entries in $B$. In the pyramid, these configurations look as follows:

\begin{center}
\begin{tikzpicture}
\draw (0,3) node{\underline{Case 1:}};

\draw (0,0) node{$\begin{array}{ccc}
c && c+1\\
\hline
&& 0 \\
&& \vdots \\
&& 0 \\
\vdots && 0 \\
\li i \gamma_j=0 && \\
\vdots && * \\
&& \vdots \\
&& * \\
\end{array}$};

\draw(5,3) node{\underline{Case 2:}};

\draw (5,0) node{$\begin{array}{ccc}
c && c+1\\
\hline
&& * \\
&& \vdots \\
\vdots && *  \\
\li i \gamma_j=1 && \\
\vdots && 1 \\
 && 1 \\
&& \vdots \\
&& 1 \\
\end{array}$};
\end{tikzpicture}
\end{center}

Let us consider these two cases. In each case, we translate $\fn{\cdot}$-notation of $\lambda$ and of $\tau$ for each partition $\tau$ in column $c+1$, into $\{\cdot\}$-notation to get $\{s,t\}$ and $\{s', t'\}$, respectively and we will see that $\tau \ntriangleleft \lambda$ by noticing that $\{s', t'\} \lessdom \{s,t\}$ or they are not comparable, using (\ref{eq:domrich}) as in the proof of Proposition \ref{prop:dominancepyramid}. Consider all possible cases for positions of $\lambda$ and $\tau$. We get the following values for $\lambda=\{s,t\}$ and $\tau=\{s',t'\}$, where $k \geq 1$:\\

\begin{minipage}[t]{0.5\textwidth}
\begin{tabular}{cc|cc}
\multicolumn{4}{l}{\underline{Case 1:}} \\
\\
\multicolumn{2}{c|}{$\lambda$} & \multicolumn{2}{c}{$\tau$} \\
\hline
$s$ & $t$ & $s'$& $t'$\\
\hline
$2i+3$ & $2j+1$ & $2i-2k+3$ & $2j+2k+3$ \\
$2i+2$ & $2j+1$ & $2i+2k+5$ & $\leq 2p$ \\
&& $2i+2k+5$ & $2j-2k+5$ \\
&& $2i+2k+5$ & $2j-2k+1$ \\
&& $2i+2k+4$ & $2j-2k+1$ \\
&& $2i+2k+3$ & $2j-2k+2$ \\
&& $2i+2k+3$ & $2j-2k+3$ \\
\end{tabular}
\end{minipage}
\begin{minipage}[t]{0.5\textwidth}
\begin{tabular}{cc|cc}
\multicolumn{4}{l}{\underline{Case 2:}} \\
\\
\multicolumn{2}{c|}{$\lambda$} & \multicolumn{2}{c}{$\tau$} \\
\hline
$s$ & $t$ & $s'$& $t'$\\
\hline
$2i+1$ & $2j+3$ & $2i+2k+5$ & $2j-2k+3$ \\
$2i+2$ & $2j+2$ & $2i+2k+5$ & $\leq 2p$ \\
&& $2i+2k+3$ & $2j-2k+3$ \\
&& $2i-2k+3$ & $2j+2k+3$ \\
&& $2i-2k+2$ & $2j+2k+3$ \\
&& $2i-2k+1$ & $2j+2k+4$ \\
&& $2i-2k+1$ & $2j+2k+5$ \\
\end{tabular}
\end{minipage}
\\ \vspace{0.5 cm}

For each of these possible values for $s,s',t, t'$ we always obtain that $s<s'$ or $t<t'$, then either $\lambda \trianglelefteq \tau$ or $\lambda$ and $\tau$ are not comparable for $\trianglelefteq$, by (\ref{eq:domrich}). That is $\tau \ntriangleleft \lambda$. This concludes the base case.

For the inductive step, suppose that $\tau$ is in column $c+N$, with $N>1$. And suppose that $\tau \trianglelefteq \lambda$. Let us see that there is a contradiction. By Proposition \ref{prop:dominancepyramid}, there is a partition $\tilde{\tau}$ in column $c+(N-1)$ such that $\tilde{\tau}\lessdom\tau$. Since $\tilde{\tau}$ is in column $c+(N-1)$, by induction $\tilde{\tau} \ntriangleleft \lambda$. But $\tilde{\tau}\lessdom\tau$ and $\tau \trianglelefteq \lambda$ imply that $\tilde{\tau} \trianglelefteq \lambda$, a contradiction.
\end{proof}

\subsection{SUBS for \texorpdfstring{$\Bgot_\gamma$}{Bg}}\label{sec:decmat}

This section contains the main result. Recall the set $U_{p,n}$ from \ref{eq:UBSpn} in \S \ref{sec:oddweig}:
\[
U_{p,n}=\{\lambda \in \Reg{p}{n} \mid \m(\lambda)<\lambda\}\ \ \sqcup \  \{\lambda' \mid \lambda \in \Reg{p}{n} \ \text{and} \  \m(\lambda)<\lambda\} \ \sqcup \ \{\lambda \in \Reg{p}{n} \mid \m(\lambda)=\lambda\},
\]
which, together with an order on $\Par{n}$ and a bijection with $\Reg{p}{n}$, form a UBS for $\mathbb{F}_p\Sn$. And which by restriction, for each block $\Bgot_\gamma$,
\[
U_\gamma=\{\lambda \in \Reg{p}{\Bgot_\gamma} \mid \m(\lambda)<\lambda\}\ \ \sqcup \  \{\lambda' \mid \lambda \in \Reg{p}{\Bgot_\gamma} \ \text{and} \  \m(\lambda)<\lambda\} \ \sqcup \ \{\lambda \in \Reg{p}{\Bgot_\gamma} \mid \m(\lambda)=\lambda\},
\]
forms a UBS for this block. We define a new set $V_\gamma$ obtained from $U_\gamma$ by replacing the set of self-Mullineux partitions in $\Bgot_\gamma$ by the set of self-conjugate partitions in that block:
\begin{equation}\label{eq:basicblock}
V_\gamma:=\{\lambda \in \Reg{p}{\Bgot_\gamma} \mid \m(\lambda)<\lambda\}\ \ \sqcup \  \{\lambda' \mid \lambda \in \Reg{p}{\Bgot_\gamma} \ \text{and}\  \m(\lambda)<\lambda\} \ \sqcup \ \{\lambda \in \Bgot_\gamma \mid \lambda=\lambda'\}.
\end{equation}

We denote each of these three subsets as $V_\gamma^1$, $V_\gamma^2$, and $V_\gamma^3$, respectively. The set $V_\gamma$ is in bijection with $\Reg{p}{\Bgot_\gamma}$. Indeed, write $\Reg{p}{\Bgot_\gamma}$ as
\begin{equation}\label{eq:simpleblock}
\Reg{p}{\Bgot_\gamma} =\{\lambda \in \Reg{p}{\Bgot_\gamma} \mid \m(\lambda)<\lambda\}\ \ \sqcup \  \{\m(\lambda) \mid \lambda \in \Reg{p}{\Bgot_\gamma} \ \text{and}\  \m(\lambda)<\lambda\} \ \sqcup \ \{\lambda \in \Bgot_\gamma \mid \lambda=\m(\lambda)\},
\end{equation}
where we denote these three subsets as  $W_\gamma^1$, $W_\gamma^2$, and $W_\gamma^3$, respectively. Note that $V_\gamma^3=\{\nu_k \ \mid \ 1 \leq k \leq \frac{p-1}{2}\}$ and $W_\gamma^3=\{\mu_k \ \mid \ 1 \leq k \leq \frac{p-1}{2}\}$. Hence there is a bijection 
\[
\begin{array}{cccl}
\Psi_\gamma: & V_\gamma & \longrightarrow & \Reg{p}{\Bgot_\gamma} \\
\\
& \lambda & \longmapsto & \begin{cases*}
\lambda & if $\lambda \in V_\gamma^1$,\\
\m(\lambda') & if $\lambda \in V_\gamma^2$,\\
\mu_k & if $\lambda \in V_\gamma^3$ and $\lambda=\nu_k$ for some $1 \leq k \leq \frac{p-1}{2}$,
\end{cases*}
\end{array}
\]
which restricts to the bijection from Corollary \ref{coro:corresp} on the self-conjugate partitions. Let us define a total order in $\Bgot_\gamma$. First, label partitions in $V_\gamma^1=\{\lambda_1, \lambda_2, \ldots, \lambda_t\}$, where $t=|V_\gamma^1|$, in such a way that $\lambda_1 > \lambda_2 > \cdots > \lambda_t$ in the lexicographic order. Now, let  $\prec$ be a total order in $\Bgot_\gamma$ such that 
\begin{equation}\label{eq:tot}
\lambda_1 \succ \lambda_2 \succ \cdots \succ \lambda_t \succ \lambda_1' \succ \lambda_2' \succ \cdots \succ \lambda_t' \succ \nu_1 \succ \nu_2 \succ \cdots \succ \nu_{\delta-1} \succ \nu_\delta \succ \nu_\frac{e-1}{2} \succ  \nu_\frac{e-3}{2} \succ \cdots \succ \nu_{\delta+2} \succ \nu_{\delta+1},
\end{equation}
where the number $\delta$ is as in Definition \ref{def:delta}, and such that for any other partition $\lambda \in \Bgot_\gamma \setminus V_\gamma$, we have $\lambda \prec \tau$ for every $\tau \in V_\gamma$. Having defined the bijection $\Psi_\gamma$ and the total order $\prec$, we can now state our main result:

\begin{theorem}\label{th:main}  The set $(V_\gamma, \prec, \Psi_\gamma)$ is a stable unitriangular basic set (SUBS) for the block $\Bgot_\gamma$. 
\end{theorem}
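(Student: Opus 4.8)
The strategy is to verify that $(V_\gamma,\prec,\Psi_\gamma)$ is a UBS for $\Bgot_\gamma$ in the sense of Definition~\ref{def:unibl} (conditions (1) and (2)) and then that it satisfies the stability conditions (A) and (B) of Definition~\ref{def:SUBSblock}, organising the work so that the part shared with the known UBS $U_\gamma$ is handled uniformly and only the self-conjugate/self-Mullineux piece needs new input. The stability conditions are quick. For (A), conjugation exchanges $V_\gamma^1$ and $V_\gamma^2$ by construction and fixes $V_\gamma^3$ setwise, so $V_\gamma$ is conjugation-stable. For (B), the self-conjugate members of $V_\gamma$ are exactly the $\nu_k\in V_\gamma^3$; since every $p$-BG-partition is self-conjugate and, by Lemma~\ref{lem:evenweight} together with \cite[Proposition~6.1]{brunatgramain}, a weight-$2$ block contains exactly $\tfrac{p-1}{2}$ of each, the inclusion of the BG-partitions among the self-conjugate ones together with the equality of cardinalities forces every $\nu_k$ to be a $p$-BG-partition.

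For condition (1) I would treat $\Psi_\gamma$ blockwise. If $\mu\in V_\gamma^1$ then $d_{\mu\Psi_\gamma(\mu)}=d_{\mu D^\mu}=1$ because $\mu$ is $p$-regular. If $\lambda'\in V_\gamma^2$ with $\lambda\in V_\gamma^1$, then $d_{\lambda'D^{\m(\lambda)}}=d_{\lambda D^{\lambda}}=1$ by the conjugation--Mullineux symmetry $d_{\sigma D^\mu}=d_{\sigma'D^{\m(\mu)}}$ of Proposition~\ref{fact:mulli}. The one genuinely new equality is $d_{\nu_k D^{\mu_k}}=1$: here I would feed the placement of $\nu_k$ and $\mu_k$ in the two adjacent sets $\dr_{2k-1},\dr_{2k}$ (Lemma~\ref{lemma:drondb}) into Richards' explicit weight-$2$ decomposition numbers \cite{richards} (equivalently the reformulation in \cite{fayers}) to read off that this cross-$\dr$ entry equals $1$.

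Condition (2) is where Lemma~\ref{lem:pyramorder} and the tailored order \eqref{eq:tot} do the work, and I would separate the columns by the location of their pivot. For a pivot $\sigma\in V_\gamma^1$, any nonzero row $\lambda$ satisfies $\lambda\lessdom\sigma$ by \eqref{eq:conditiondominance}; if $\lambda\in V_\gamma^1$ then $\lambda\preceq\sigma$ because $\prec$ restricts to the (dominance-refining) lexicographic order there, and otherwise $\lambda\prec\sigma$ since $V_\gamma^1$ is the top block of $\prec$. The pivots in $V_\gamma^2$ then follow by conjugating through Proposition~\ref{fact:mulli}: the key point, provable directly from Lemma~\ref{lem:pyramorder}, is that a column with pivot in $V_\gamma^1$ has no nonzero row in $V_\gamma^2$ (such a row's conjugate would be a $p$-regular partition strictly right of the central column of the pyramid forced to be dominated by a partition $\m(\sigma)$ strictly left of it, which Lemma~\ref{lem:pyramorder} forbids), and $\prec$ matches $V_\gamma^2$ to $V_\gamma^1$ order-isomorphically via conjugation.

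This leaves the columns $D^{\mu_k}$, which carry the whole novelty and form the main obstacle. Because $\mu_k$ is self-Mullineux the column is conjugation-symmetric by Proposition~\ref{fact:mulli}, and because $\mu_k$ occupies the central column of the pyramid, Lemma~\ref{lem:pyramorder} confines every $p$-regular nonzero row weakly to the left of the centre; combining these two facts rules out all nonzero rows in $V_\gamma^1$ (strictly right of the centre) and hence, by symmetry, all nonzero rows in $V_\gamma^2$. Its remaining forced rows $\mu_k,\mu_k'$ lie outside $V_\gamma$ altogether (they are not self-conjugate and are disjoint from $V_\gamma^1\cup V_\gamma^2$ inside $U_\gamma$), so they precede $\nu_k$. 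Thus condition (2) for $D^{\mu_k}$ reduces to showing $\nu_{k'}\prec\nu_k$ for every self-conjugate $\nu_{k'}$ with $d_{\nu_{k'}D^{\mu_k}}\neq0$. By the $\dr$-adjacency of nonzero decomposition numbers in weight $2$ and Lemma~\ref{lemma:drondb}, these $\nu_{k'}$ lie in $\dr$-sets neighbouring that of $\mu_k$, and the interleaving of the $\nu_k$ prescribed in \eqref{eq:tot}, governed by $\delta$ of Definition~\ref{def:delta}, is built precisely to place all of them below $\nu_k$. Identifying this occurrence set from Richards' numbers and matching it to the $\delta$-bookkeeping is the delicate, and decisive, step of the proof.
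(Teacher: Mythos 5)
Your overall route is the paper's: the same three\-/piece decomposition of $V_\gamma$, the same use of Proposition~\ref{fact:mulli} to carry columns between $V_\gamma^1$ and $V_\gamma^2$, the same appeal to Lemma~\ref{lem:pyramorder} together with the left/centre/right partition of the pyramid to kill the blocks pairing $V_\gamma^1\cup V_\gamma^2$ against the ``wrong'' columns, and the same reliance on Richards' weight-$2$ decomposition numbers (Proposition~\ref{prop:matrixrich}) plus Lemma~\ref{lemma:drondb} for the self-conjugate/self-Mullineux corner. Your argument for stability condition (B) is a genuine, and slightly slicker, variant: the paper checks directly via $p$-quotients that every self-conjugate partition of weight $2$ is a BG-partition, whereas you deduce it from the inclusion of the BG-partitions among the self-conjugate ones together with the equality of cardinalities $|\text{BG}_p^n\cap\Bgot_\gamma|=|\mathcal{M}_p^n\cap\Bgot_\gamma|=\tfrac{p-1}{2}=|\{\nu_k\}|$ coming from Lemma~\ref{lem:evenweight}, \cite[Proposition~6.1]{brunatgramain} and Lemma~\ref{lem:labelselfc}. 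Both work.

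The one place where you stop short is precisely the step the paper spends the second half of its proof on, and which you yourself flag as ``delicate and decisive'': determining from Proposition~\ref{prop:matrixrich} the exact set of self-conjugate rows meeting each column $\mu_k$, and checking that the order \eqref{eq:tot} puts them at or below the pivot $\nu_k$. You assert that the interleaving ``is built precisely to place all of them below $\nu_k$'' without verifying it. The missing verification is a two-case analysis on $\delta$: if $k\leq\delta$ then $\mu_k\in\dr_{2k}$ by Lemma~\ref{lemma:drondb}, and the self-conjugate partitions lying in a $\dr$-set adjacent to $\dr_{2k}$ are among $\{\nu_k,\nu_{k+1}\}$, so one needs $\nu_{k+1}\prec\nu_k$, which \eqref{eq:tot} provides (including at the seam $k=\delta$, where in fact only the pivot $\nu_\delta$ survives and $\nu_{\delta+1}$ is placed last anyway); if $k>\delta$ then $\mu_k\in\dr_{2k-1}$ and the candidates are among $\{\nu_{k-1},\nu_k\}$, with $\nu_{k-1}\prec\nu_k$ again guaranteed by the reversed tail of \eqref{eq:tot}. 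Note also that the diagonal entries $d_{\nu_k\mu_k}=1$ do not follow from $\dr$-adjacency alone: Proposition~\ref{prop:matrixrich} additionally requires $\m(\mu_k)'\lessdom\nu_k\lessdom\mu_k$, and this dominance comparison is not free, which is why the paper invokes the explicit value from \cite[Proposition~3.1]{fayers} (Richards' tables) rather than arguing from adjacency. So: right strategy, all the correct ingredients named, but the decisive bookkeeping is asserted rather than carried out.
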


We state three important facts about decomposition numbers for the proof:

\begin{proposition}[Corollary 4.17 \text{\cite{mathas}}]\label{fact:unitri}
Let $\mu \in \Reg{p}{n}$. Then
\begin{itemize}
\item $d_{\mu\mu}=1$, and 
\item if $d_{\lambda\mu}\neq 0$ then $\lambda \domleq \mu.$
\end{itemize}
\end{proposition}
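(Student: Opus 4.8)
The statement to prove is the classical unitriangularity of the decomposition matrix of $\F_p\Sn$ (due to James), and my plan is to reprove it from the standard Specht-module apparatus rather than merely cite it. Write $M^\mu=\textup{Ind}_{\mathfrak{S}_\mu}^{\Sn}\F_p$ for the Young permutation module, $S^\mu\subseteq M^\mu$ for the Specht module spanned by polytabloids, and let $\langle\,,\,\rangle$ be the $\Sn$-invariant symmetric bilinear form on $M^\mu$ for which the tabloids are orthonormal. Put $\textup{rad}\,S^\mu=S^\mu\cap(S^\mu)^{\perp}$ and $D^\mu=S^\mu/\textup{rad}\,S^\mu$. The first ingredient I would invoke is James' submodule theorem: for every submodule $U\subseteq M^\mu$ one has $S^\mu\subseteq U$ or $U\subseteq(S^\mu)^{\perp}$. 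Applied to a submodule $V$ of $S^\mu$ strictly containing $\textup{rad}\,S^\mu$, it forces $V=S^\mu$ (otherwise $V\subseteq(S^\mu)^\perp$, hence $V\subseteq\textup{rad}\,S^\mu$); thus $\textup{rad}\,S^\mu$ is the unique maximal submodule, $D^\mu=\textup{head}(S^\mu)$ is irreducible, and $D^\mu\neq 0$ exactly when $\mu$ is $p$-regular. In particular $D^\mu$ occurs in $S^\mu$, so $d_{\mu\mu}=[S^\mu:D^\mu]\geq 1$.

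The substantive half is the triangularity $d_{\lambda\mu}\neq 0\Rightarrow\lambda\domleq\mu$, equivalently that every composition factor $D^\mu$ of $S^\lambda$ satisfies $\mu\trianglerighteq\lambda$. The natural engine is James' semistandard homomorphism theorem (equivalently the kernel-intersection theorem), which gives $\textup{Hom}_{\F_p\Sn}(S^\lambda,M^\mu)\neq 0\Rightarrow\lambda\trianglerighteq\mu$, with $\textup{Hom}(S^\mu,M^\mu)$ one-dimensional since the only semistandard $\mu$-tableau of content $\mu$ is the superstandard one ($K_{\mu\mu}=1$). Combined with the embedding $D^\mu\cong\textup{soc}\big(M^\mu/(S^\mu)^\perp\big)$ coming from the form, this controls $D^\mu$ in the head and socle of the relevant modules. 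The hard part, which I expect to be the main obstacle, is that a \emph{composition factor} need not sit in a head or socle, so the homomorphism bound does not immediately transfer to arbitrary composition multiplicities; one must either run James' finer argument via the submodule theorem and the action of column stabilisers on polytabloids, or, conceptually most cleanly, transport the result across the Schur functor from the Schur algebra $S_{\F_p}(n,n)$, which is quasi-hereditary for the dominance order. There the standard (Weyl) modules $\Delta(\lambda)$ satisfy $[\Delta(\lambda):L(\mu)]\neq 0\Rightarrow\mu\trianglerighteq\lambda$ with diagonal multiplicity $1$ by general highest-weight-category theory, and (with the standard conventions identifying $\Delta(\lambda)$ with $S^\lambda$ and $L(\mu)$ with $D^\mu$ for $p$-regular $\mu$, up to the usual label bookkeeping) this yields exactly the asserted unitriangularity.

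Finally, assuming the triangularity, the sharpening $d_{\mu\mu}=1$ would follow by a short duality computation that I would carry out as follows. By Young's rule, $M^\mu$ admits a Specht filtration with sections $S^\nu$ for $\nu\trianglerighteq\mu$, each with multiplicity $K_{\nu\mu}$ and $K_{\mu\mu}=1$; hence $[M^\mu:D^\mu]=\sum_{\nu\trianglerighteq\mu}K_{\nu\mu}\,d_{\nu\mu}$, and triangularity ($d_{\nu\mu}\neq 0\Rightarrow\nu\domleq\mu$) collapses the sum to the single term $\nu=\mu$, giving $[M^\mu:D^\mu]=d_{\mu\mu}$. On the other hand the form induces $M^\mu/(S^\mu)^\perp\cong(S^\mu)^{\circ}$ (contravariant dual), so $[M^\mu/(S^\mu)^\perp:D^\mu]=[S^\mu:D^\mu]=d_{\mu\mu}$ because contravariant duality fixes the self-dual simple $D^\mu$ and preserves composition multiplicities. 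Additivity in the short exact sequence $0\to(S^\mu)^\perp\to M^\mu\to M^\mu/(S^\mu)^\perp\to 0$ then forces $[(S^\mu)^\perp:D^\mu]=0$, whence $[\textup{rad}\,S^\mu:D^\mu]=0$ since $\textup{rad}\,S^\mu\subseteq(S^\mu)^\perp$. Therefore $d_{\mu\mu}=[\textup{head}(S^\mu):D^\mu]+[\textup{rad}\,S^\mu:D^\mu]=1+0=1$, completing the argument.
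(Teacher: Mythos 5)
First, a framing remark: the paper does not prove this proposition at all. It is quoted as a known result (Corollary 4.17 of Mathas's book) and used as a black box, together with Propositions \ref{fact:mulli} and \ref{prop:matrixrich}, in the proof of Theorem \ref{th:main}. So your proposal has to be measured against the classical proofs (James's original argument, or the cellular/Schur-algebra approach of the cited reference), not against anything in the paper. Within your proposal, the two outer steps are correct: the submodule-theorem argument giving $d_{\mu\mu}\geq 1$, and the Young's-rule-plus-contravariant-duality computation showing that, once triangularity is known, $[(S^\mu)^\perp:D^\mu]=0$ and hence $d_{\mu\mu}=1$.

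The genuine gap is the middle step, the triangularity $d_{\lambda\mu}\neq 0\Rightarrow\lambda\domleq\mu$, which you yourself call the substantive half but never actually prove: you name James's refined argument without running it, and the Schur-algebra transport you lean on instead is wrong as stated. If $S_{\F_p}(n,n)$ is taken quasi-hereditary \emph{for the dominance order} (your announced convention), then general highest-weight theory gives $[\Delta(\lambda):L(\mu)]\neq 0\Rightarrow\mu\trianglelefteq\lambda$, i.e.\ composition factors of a Weyl module are \emph{dominated} by its highest weight --- the opposite of the direction you wrote. Moreover, the ``usual label bookkeeping'' is exactly where the content sits: with conventions under which $\Delta(\lambda)$ corresponds to $S^\lambda$, the Schur functor does \emph{not} send $L(\mu)$ to $D^\mu$; it kills $L(\mu)$ unless $\mu$ is $p$-restricted, and then sends it to $D^{\mu'}\otimes\textup{sgn}\cong D^{\m(\mu')}$, a conjugation-plus-Mullineux twist. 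Pushing the true highest-weight bound through that twist yields $d_{\lambda\mu}\neq 0\Rightarrow \m(\mu)'\trianglelefteq\lambda$, which is a different (if also true) statement, not the one you want. The transport that does work is the identity $d_{\lambda\mu}=[\Delta(\lambda'):L(\mu')]$, with \emph{both} labels conjugated; since conjugation reverses $\trianglelefteq$, the highest-weight bound $\mu'\trianglelefteq\lambda'$ becomes $\lambda\trianglelefteq\mu$, and $d_{\mu\mu}=[\Delta(\mu'):L(\mu')]=1$ comes for free (which would also make your final paragraph unnecessary). As written, your argument reaches a true conclusion only because two errors cancel --- the flipped highest-weight direction and the naive label identification --- and establishing the conjugate-label identity is itself a theorem of essentially the same depth as the proposition being proved, so it cannot be dismissed as bookkeeping.
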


\begin{proposition}[The Mullineux map]\label{fact:mulli} Let $\lambda \in \Par{n}$ and $\mu \in \Reg{p}{n}$. Then $d_{\lambda\mu}=d_{\lambda' \m(\mu)}$, where $\m$ is the Mullineux map.
\end{proposition}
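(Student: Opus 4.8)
The plan is to deduce the identity from two classical structural facts, assembling them formally; the statement is the standard Mullineux reciprocity for decomposition numbers, so the work is entirely in invoking the right module-theoretic inputs. Throughout write $d_{\lambda\mu}=[S^\lambda:D^\mu]$ for the composition multiplicity over $\F_p$, and let $\varepsilon$ be the sign representation of $\F_p\Sn$.

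First I would record that $-\otimes\varepsilon$ is an exact self-equivalence of the category of finitely generated $\F_p\Sn$-modules (it is tensoring with a one-dimensional module), so it preserves composition multiplicities: for every module $M$ and every simple module $D$ one has $[M:D]=[M\otimes\varepsilon:D\otimes\varepsilon]$. By the very definition of the Mullineux map recalled in the introduction, $D^\mu\otimes\varepsilon\cong D^{\m(\mu)}$ for $\mu\in\Reg p n$, and since $\m(\mu)\in\Reg p n$ the right-hand side is a genuine simple module. Taking $M=S^\lambda$ and $D=D^\mu$ gives the first display $d_{\lambda\mu}=[S^\lambda\otimes\varepsilon:D^{\m(\mu)}]$.

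Second I would invoke the classical description of the sign-twist of a Specht module: over an arbitrary field, $S^\lambda\otimes\varepsilon$ is isomorphic to the contragredient dual $(S^{\lambda'})^{*}$ of the Specht module labelled by the conjugate partition (see e.g. \cite{jameskerber}). Combining this with the self-duality of every simple $\F_p\Sn$-module and with the fact that contragredient duality is exact and sends $[N:D]$ to $[N^{*}:D^{*}]=[N^{*}:D]$, it follows that $S^\lambda\otimes\varepsilon$ and $S^{\lambda'}$ have exactly the same composition multiplicities. In particular $[S^\lambda\otimes\varepsilon:D^{\m(\mu)}]=[S^{\lambda'}:D^{\m(\mu)}]=d_{\lambda'\,\m(\mu)}$, and chaining this with the first display yields $d_{\lambda\mu}=d_{\lambda'\,\m(\mu)}$.

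The only nonformal inputs are these two classical facts, and the step requiring the most care is the second: one must use the convention-independent statement that the composition factors of $S^\lambda\otimes\varepsilon$ coincide with those of $S^{\lambda'}$, which in characteristic $p$ genuinely requires passing through the dual Specht module together with the self-duality of the simple modules $D^\nu$, rather than a naive isomorphism $S^\lambda\otimes\varepsilon\cong S^{\lambda'}$ (valid only in characteristic $0$ or when $p>n$, where $\m$ is plain conjugation). Everything else is the formal preservation of multiplicities under the exact self-equivalence $-\otimes\varepsilon$ and the definitional property $D^\mu\otimes\varepsilon\cong D^{\m(\mu)}$.
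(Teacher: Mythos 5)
Your proof is correct. Note that the paper itself gives no proof of this proposition: it is recorded as one of three classical inputs for Theorem \ref{th:main} (alongside the unitriangularity statement from Mathas and Richards' description of the weight-two decomposition matrices) and is stated without citation or argument, so there is no ``paper proof'' to compare against; your write-up supplies the standard derivation of this folklore fact. The chain you use is exactly the right one: $-\otimes\varepsilon$ is an exact self-equivalence, hence preserves composition multiplicities; $D^\mu\otimes\varepsilon\cong D^{\m(\mu)}$ is the defining property of $\m$ as recalled in the paper's introduction; and the Specht-module side is handled via $S^\lambda\otimes\varepsilon\cong (S^{\lambda'})^{*}$ together with the self-duality of the simple modules $D^\nu$ and the fact that contragredient duality preserves multiplicities of self-dual simples. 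You are also right to flag where the care is needed: the naive isomorphism $S^\lambda\otimes\varepsilon\cong S^{\lambda'}$ is false in general for $p\leq n$, and passing through the dual Specht module is what makes the argument valid in all characteristics. The one thing I would add is an explicit reference for the isomorphism $S^\lambda\otimes\varepsilon\cong (S^{\lambda'})^{*}$ and for the self-duality of the $D^\nu$ (both are in James's book or in \cite{jameskerber}), since these are the only non-formal ingredients.
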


\begin{proposition}[\text{Theorem 4.4 \cite{richards}}]\label{prop:matrixrich}
Let $p\neq2$ be a prime. Let $\gamma \vdash (n-2p)$ be a $p$-core and $\Bgot_\gamma$ the corresponding block of $\Sn$. Let $\lambda \in \Bgot_\gamma$ and $\mu \in \Reg{p}{\Bgot_\gamma}$. Then $d_{\lambda\mu}=1$ if $\lambda=\mu$ or $\lambda=\m(\mu)'$ or both $\m(\mu)' \lessdom \lambda \lessdom \mu$ and $\dr \lambda - \dr \mu = \pm 1$; otherwise $d_{\lambda\mu}=0$.
\end{proposition}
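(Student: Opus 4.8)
The plan is to verify the two stability conditions (A) and (B) of Definition \ref{def:SUBSblock} and then the two defining properties (1) and (2) of Definition \ref{def:unibl}, using throughout Richards' explicit description of $\DD_\gamma$ (Proposition \ref{prop:matrixrich}) together with the combinatorial input developed in this section. Condition (A) is immediate from the construction: conjugation interchanges $V_\gamma^1$ and $V_\gamma^2$ and fixes $V_\gamma^3$ setwise, so $V_\gamma$ is conjugation-closed. For condition (B), the only self-conjugate partitions in $V_\gamma$ are the $\nu_k$, so I must check each is a $p$-BG-partition. I would argue by cardinality: there are $\frac{p-1}{2}$ self-conjugate partitions (Lemma \ref{lem:labelselfc}), while Lemma \ref{lem:evenweight} with $w=2$ gives $\frac{p-1}{2}$ self-Mullineux partitions, hence (by \cite[Proposition 6.1]{brunatgramain}) also $\frac{p-1}{2}$ $p$-BG-partitions; since every $p$-BG-partition is self-conjugate, the two families coincide and each $\nu_k$ is a $p$-BG-partition.

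For property (1) I treat the three pieces of $V_\gamma$ in turn. If $\mu\in V_\gamma^1$ then $\Psi_\gamma(\mu)=\mu$ and $d_{\mu\mu}=1$ by Proposition \ref{fact:unitri}. If $\mu=\lambda'\in V_\gamma^2$ then $\Psi_\gamma(\mu)=\m(\lambda)$, and Proposition \ref{fact:mulli} gives $d_{\lambda'\,\m(\lambda)}=d_{\lambda\lambda}=1$. If $\mu=\nu_k\in V_\gamma^3$ then $\Psi_\gamma(\nu_k)=\mu_k$ and I want $d_{\nu_k\mu_k}=1$; here I invoke Proposition \ref{prop:matrixrich}, which requires $\m(\mu_k)'=\mu_k'\lessdom\nu_k\lessdom\mu_k$ together with $\dr\nu_k-\dr\mu_k=\pm1$. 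The $\dr$-condition is exactly the content of Lemma \ref{lemma:drondb} (which places $\mu_k,\nu_k$ in consecutive sets among $\dr_{2k-1},\dr_{2k}$), and the two dominance inequalities I would read off from the pyramid positions of $\mu_k$ and $\nu_k$ via Lemma \ref{lem:pyramorder}.

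Property (2) is the heart of the proof. Fix a column $\sigma\in\Reg{p}{\Bgot_\gamma}$; by Proposition \ref{prop:matrixrich} its support is the dominance interval $[\m(\sigma)',\sigma]$ cut down by $\dr\lambda-\dr\sigma=\pm1$ (together with the two endpoints). Since $\prec$ ranks the blocks $V_\gamma^1\succ V_\gamma^2\succ V_\gamma^3\succ(\Bgot_\gamma\setminus V_\gamma)$, any support row outside $V_\gamma$ is automatically $\prec$ the pivot, so I only compare the pivot with support rows lying in $V_\gamma$. For $\sigma\in V_\gamma^1$ the pivot is $\sigma$, which is the $\lessdom$-maximum of its support and lies in the top $\prec$-block; a support row in $V_\gamma^1$ is $\lessdom\sigma$, hence $\leq\sigma$ lexicographically (lex refines dominance), so $\preceq\sigma$, while any support row in a lower block is $\prec\sigma$ — this settles the $V_\gamma^1$-columns with no further work. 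For $\sigma=\m(\tau)\in W_\gamma^2$ (pivot $\tau'\in V_\gamma^2$), I pass to the conjugate column through $d_{\lambda\sigma}=d_{\lambda'\tau}$ (Proposition \ref{fact:mulli}), which reduces everything to the already-treated column $\tau$; the definition of $\prec$ on $V_\gamma^2$ (lex on conjugates) turns $\lambda'\preceq\tau$ into $\lambda\preceq\tau'$ whenever $\lambda\in V_\gamma^2$, and rows in $V_\gamma^3$ or outside $V_\gamma$ are again $\prec\tau'$. The one thing that must be excluded is a $V_\gamma^1$-row in the support of $\sigma$ (equivalently a $V_\gamma^2$-row in the support of $\tau$), since such a row would be $\succ\tau'$.

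That exclusion, and its analogue for the self-Mullineux columns, is where the real obstacle lies. For $\sigma=\mu_k$ (pivot $\nu_k$) the difficulty is sharper, because the pivot is \emph{not} the $\lessdom$-maximum of the support ($\mu_k$ is), so the ``lex refines dominance'' shortcut is unavailable; here $\mu_k$ and $\mu_k'$ are self-Mullineux, respectively its conjugate, hence outside $V_\gamma$ and thus $\prec\nu_k$, and I must show that the support meets $V_\gamma$ only in self-conjugate partitions $\nu_j$ with $\nu_j\preceq\nu_k$. Two ingredients drive this. First, since $\mu_k$ is self-Mullineux, Proposition \ref{fact:mulli} makes its support closed under conjugation, so its intersection with each chain $\dr_l$ is an interval symmetric about the chain's centre; combined with the thinness of $[\mu_k',\mu_k]$ coming from Richards' characterisation of $\lessdom$ (Lemma \ref{lem:pyramorder} and the equivalence (\ref{eq:domrich})), this intersection reduces to the central element — either a self-Mullineux/conjugate pair lying outside $V_\gamma$, or a single $\nu_j$ — and never to an off-centre partition of $V_\gamma^1\cup V_\gamma^2$. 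Second, the $\dr$-locations of the $\nu_j$ from Lemma \ref{lemma:drondb} are exactly matched by the $\delta$-dependent ordering of the $\nu_j$ inside $\prec$ prescribed in (\ref{eq:tot}), so every $\nu_j$ that can occur in the support of $\mu_k$ satisfies $\nu_j\preceq\nu_k$. The same thinness-and-position analysis, run for the lower portion of the chains, yields the exclusion of $V_\gamma^1$-rows needed for the $W_\gamma^2$-columns. Proving this thinness-and-ordering statement precisely from Richards' $\{s,t\}$-coordinates is the main technical step; once it is in place, (1) and (2) together show that $(V_\gamma,\prec,\Psi_\gamma)$ is a SUBS for $\Bgot_\gamma$.
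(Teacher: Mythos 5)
Your proposal does not prove the statement in question. The statement is Proposition \ref{prop:matrixrich}, namely Richards' closed-form description of the decomposition numbers of a weight-$2$ block: $d_{\lambda\mu}=1$ exactly when $\lambda=\mu$, or $\lambda=\m(\mu)'$, or both $\m(\mu)'\lessdom\lambda\lessdom\mu$ and $\dr\lambda-\dr\mu=\pm1$, and $d_{\lambda\mu}=0$ otherwise. What you have written is instead an argument for Theorem \ref{th:main} (that $(V_\gamma,\prec,\Psi_\gamma)$ is a SUBS for $\Bgot_\gamma$), and your argument \emph{invokes} the very proposition you were asked to prove at several points: ``using throughout Richards' explicit description of $\DD_\gamma$ (Proposition \ref{prop:matrixrich})'', the verification of $d_{\nu_k\mu_k}=1$ for the diagonal of the self-conjugate block, and the entire support analysis in your treatment of property (2). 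Relative to the target statement, the proposal is therefore circular: it assumes the proposition as a known input and derives a different result from it.

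Note also that the paper itself contains no proof of this proposition; it is imported verbatim as Theorem 4.4 of \cite{richards}, and everything in \S\ref{sec:decmat} is built on top of it as a black box. Actually proving it is a substantial piece of work in the modular representation theory of $\Sn$ --- Richards' analysis of weight-$2$ blocks via the $\{s,t\}$-coordinates, Scopes equivalence, induction and restriction between blocks, and the resulting computation of all column supports --- none of which appears in your text. A response matched to the statement would either reproduce (at least a sketch of) Richards' argument, or explicitly identify the result as an external citation that cannot be derived from the combinatorial lemmas of this paper alone; deducing the paper's main theorem from it, as you did, answers a different question.
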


\begin{proof}[Proof of Theorem \ref{th:main}]

The fact that properties (A) and (B) from Definition \ref{def:SUBSblock}, hold for $V_\gamma$ are a direct consequence of its definition. Property (A) is true by definition of the set $V_\gamma$. Property (B) holds because, in a $p$-block of weight $2$, every self-conjugate partitions is a BG-partition. Let us see this. Recall that, under a certain convention for defining the $p$-quotient of a partition in a block, if $q_{p,\gamma}(\lambda)=(\lambda^{(1)}, \lambda^{(2)}, \ldots, \lambda^{(p)})$ is the $p$-quotient of a partition with $p$-core $\gamma$ then $q_{p,\gamma'}(\lambda')=(\lambda^{(p)}{}', \lambda^{(p-1)}{}', \ldots, \lambda^{(1)}{}')$ is the $p$-quotient of its conjugate partition $\lambda'$. The BG-partitions in a $p$-block are those self-conjugate partitions $\nu$ for which the $(\frac{p+1}{2})$-th partition in the quotient is the empty partition. That is, partitions $\nu$ such that the $p$-quotient is of the form
\[
q_{p,\gamma}(\nu)=(\nu^{(1)}, \nu^{(2)},\ldots, \nu^{(\frac{p-1}{2})}, \emptyset ,\  \nu^{(\frac{p-1}{2})}{}',\ldots, \nu^{(2)}{}',\lambda^{(1)}{}' ).
\] 
In a block of $p$-weight $2$, the quotient $q_{p,\gamma}(\lambda)$ of a self-conjugate partition $\lambda$ is a $p$-multi-partition of total rank $2$. Since $\lambda$ is self-conjugate, $q_{p,\gamma}(\lambda)$ is completely determined by $\lambda^{(1)}, \lambda^{(2)}, \ldots, \lambda^{(\frac{p+1}{2})}$, where either $(\lambda^{(1)}, \lambda^{(2)}, \ldots, \lambda^{(\frac{p-1}{2})})$ is a multipartition of $1$ and $\lambda^{(\frac{p+1}{2})}=\emptyset$, or $\lambda^{(i)}=\emptyset$ for all $1\leq i \leq \frac{p-1}{2}$ and $\lambda^{(\frac{p+1}{2})}$ is a self-conjugate partition of $2$. The second option is not possible, then there exists $1\leq j < \frac{p+1}{2}$ such that $\nu^{(j)}=(1)$ and $\nu^{(j)}=(1)$ and $\nu^{(i)}=\emptyset$ for $i \neq j$ with $1\leq i\ \leq \frac{p+1}{2}$. Hence $\lambda$ is a BG-partition.
 
It remains to prove that $(V_\gamma, \prec, \Psi_\gamma)$ is a unitriangular basic set for the block $\Bgot_\gamma$. For this, consider the square matrix $\widetilde{\mathbf{D}}_\gamma$ formed by the rows of $\mathbf{D}_\gamma$ indexed by $V_\gamma$, arranged according to the total order $\prec$.

\begin{center}
\begin{tikzpicture}\label{eq:matxi}
\draw (0,0) node{
$
\begin{array}{c|ccccccccc}
& \lambda_1\phantom{--} & \cdots &  \phantom{--}\lambda_t\phantom{-} & \m(\lambda_1) & \cdots & \m(\lambda_t) & \mu_1 & \cdots & \mu_{\delta+1}\\
\hline
\lambda_1 & \\[10pt]
\vdots & & \mathbf{D}_1 & & &  \mathbf{D}_2 \\[10pt]
\lambda_t \\[15pt]
\lambda_1' \\[10pt]
\vdots & & \mathbf{D}_3 & & &  \mathbf{D}_4 \\[10pt]
\lambda_t' \\[10pt]
\nu_1 \\[5pt]
\vdots & &&&&&&& \mathbf{D}_6 \\[5pt]
\nu_{\delta +1}
\end{array}$};
\draw (-4.35,0.9)--(2.8,0.9);
\draw (-4.35,-1.8)--(5.5,-1.8);
\draw (-1,3.5)--(-1,-1.8);
\draw (2.8,3.5)--(2.8,-4);
\draw (4,0.9) node{$\mathbf{D}_5$};
\draw (-1, -3) node{* * *};
\end{tikzpicture}
\end{center}

We will show that $\widetilde{\mathbf{D}}_\gamma$ is lower unitriangular. We do it by steps: first, for the square submatrices $\mathbf{D}_1, \ldots, \mathbf{D}_4$, we show that $\mathbf{D}_1=\mathbf{D}_4$ is lower unitriangular and that $\mathbf{D}_2=\mathbf{D}_3=(0)_{t \times t}$. We show as well that $\mathbf{D}_5=(0)_{2t \times \frac{p-1}{2}}$. Finally we show that $\mathbf{D}_6$ is lower unitriangular. Having shown this we will have that $(\tilde{V}_\gamma, \prec, \Psi_\gamma)$ is a stable unitriangular basic set for the block $\Bgot_\gamma$. \\

\noindent \underline{$\mathbf{D}_1$ and $\mathbf{D}_4$:}\  since $\lambda_i$ is $p$-regular for $1 \leq i \leq t$, then $d_{\lambda_i\lambda_i}=1$ for $1 \leq i \leq t$. Let $1 \leq i, j \leq t $. By definition of $\prec$, if $\lambda_i \succ \lambda_j$, then $\lambda_i > \lambda_j$ for the lexicographic order. Hence either $\lambda_i \trianglerighteq \lambda_j$ or $\lambda_i$ and $\lambda_j$ are not comparable for $\trianglelefteq$. By Proposition \ref{fact:unitri}, $d_{\lambda_i\lambda_j}=0$. This shows that $\mathbf{D}_1$ is lower unitriangular. For $1 \leq i,j \leq t$, by Proposition \ref{fact:mulli}, $d_{\lambda_i'\m(\lambda_j)}= d_{\lambda_i\lambda_j}$. Hence $\mathbf{D}_4=\mathbf{D}_1$ is lower unitriangular.\\

\noindent \underline{$\mathbf{D}_2$, $\mathbf{D}_3$, and $\mathbf{D}_5$:} for studying $\mathbf{D}_2$, $\mathbf{D}_3$ and $\mathbf{D}_5$, notice that we have the following property: let $\lambda, \tau \in W_\gamma^1$ such that $\dr \lambda, \dr \tau \geq 1$ and let $\mu \in W_\gamma^3$. Then $\lambda \ntriangleleft \mu$ and $\lambda \ntriangleleft \m(\tau)$.

We prove this property. Let $1\leq l \leq p-1$. From \S \ref{sec:conjugat} we know that, depending on $|\dreg_l|$, the partitions in $\dreg_l$ can be listed as either
\[
\tau_1 \triangleright \tau_2 \triangleright \cdots \triangleright \tau_r \triangleright \mu \triangleright \m(\tau_r) \triangleright \cdots \triangleright \m(\tau_2) \triangleright \m(\tau_1),
\]
or
\[
\tau_1 \triangleright \tau_2 \triangleright \cdots \triangleright \tau_r \triangleright \m(\tau_r) \triangleright \cdots \triangleright \m(\tau_2) \triangleright \m(\tau_1),
\]
where $r=\left\lfloor\frac{|\dreg_l|}{2}\right\rfloor$, $\m$ is the Mullineux map and $\mu$ is some self-Mullineux partition.

On the other hand, by Corollaries \ref{cor:richardscorresp}, \ref{coro:selfm} and Proposition \ref{prop:dominancepyramid}, we know that these partitions are distributed in the pyramid from left to right in increasing dominance order, horizontally symmetrical and that the self-Mullineux partitions are in the middle column. These observations allow to identify three zones in the pyramid with the intersection of $\bigcup_{l \geq 1}\dreg_l$ with the three subsets $W_\gamma^1$, $W_\gamma^2$ and $W_\gamma^3$  of $\Reg{p}{\Bgot_\gamma}$ (defined in (\ref{eq:simpleblock})): $W_\gamma^1$ is the left half, $W_\gamma^2$ is the right half and $W_\gamma^3$ is the column in the middle:

\begin{center}
\includegraphics[scale=0.5]{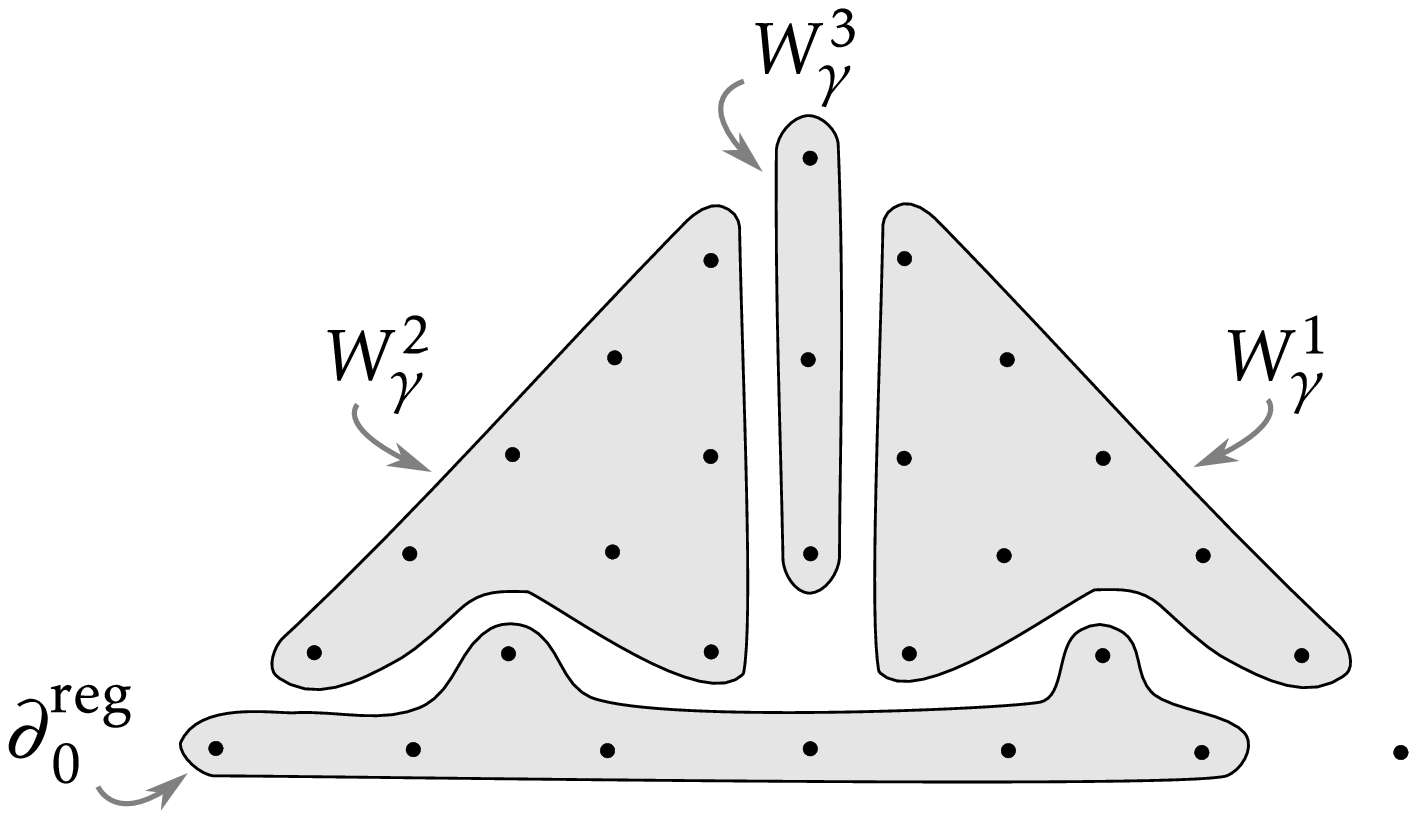}
\end{center}

Now, let $\lambda, \tau \in W_\gamma^1$ with $\dr \lambda , \dr \tau \geq 1$ and let $\mu \in W_\gamma^3$. Then $\mu$ is to the left of $\lambda$ in the pyramid, and $\m(\tau)$ is to the left of $\lambda$. By Lemma \ref{lem:pyramorder}, $\lambda \ntriangleleft \mu$ and $\lambda \ntriangleleft \m(\tau)$, which concludes the proof of this property. \\

We can now show that $\mathbf{D}_2$, $\mathbf{D}_3$ and $\mathbf{D}_5$ are matrices of zeros. Let $\lambda_i, \lambda_j \in V_\gamma^1=W_\gamma^1$ for some $1 \leq i \leq t$. From the affirmation above, $\lambda_i \ntriangleleft \m(\lambda_j)$. Then, by Proposition \ref{fact:unitri}, $d_{\lambda_i \m(\lambda_j)}=0$. That is, $\mathbf{D}_2= (0)_{t \times t}$. On the other hand, by Proposition \ref{fact:mulli} $d_{\lambda_i' \lambda_j}=d_{\lambda_i \m(\lambda_j)}=0$, so that $\mathbf{D}_3=\mathbf{D}_2= (0)_{t \times t}$.

Let $\mu_k \in W_\gamma^3$, for some $1\leq k \leq \frac{p-1}{2}$. From the assertion above, $\lambda_i \ntriangleleft \mu_k$. Then by Proposition \ref{fact:unitri}, $d_{\lambda_i \mu_k}=0$. On the other hand, $d_{\lambda_i' \mu_k}=d_{\lambda_i \m(\mu_k)}=d_{\lambda_i \mu_k}=0$. This shows that $\mathbf{D}_5= (0)_{2t \times \frac{p-1}{2}}$. \\

\noindent \underline{$\mathbf{D}_6$}:\ it remains to show that $\mathbf{D}_6$, is lower unitriangular. We first prove that it is lower triangular. Consider the matrix $\mathbf{\widetilde{D}}_6$ defined by taking $\mathbf{D}_6$ and ordering rows as $\nu_1, \nu_2, \ldots, \nu_\frac{p-1}{2}$ and columns as $\mu_1, \mu_2, \ldots, \mu_\frac{p-1}{2}$. The matrix $\mathbf{\widetilde{D}}_6$ is ``\emph{almost}'' lower triangular. For making clear what we mean by ``\emph{almost}'', let us study the precise form of the matrix, with Proposition \ref{prop:matrixrich} and Lemma \ref{lemma:drondb} (for clarity see Example \ref{ex:lastm} after this proof). From Proposition \ref{prop:matrixrich}, in $\mathbf{D}_\gamma$ every entry in the column $\mu_k$ is zero except for rows $\mu_k$, $\m(\mu_k)'=\mu_k'$ which are $1$, and any $\lambda$ with $\mu_k' \lessdom \lambda \lessdom \mu_k$ and $\dr\lambda - \dr\mu_k = \pm 1$, which is also $1$. Now if $\mu_k \in \dr_l$ then $\mu_k' \in \dr_l$, and for $\mathbf{\widetilde{D}}_6$ we are only interested in rows corresponding to the self-conjugate partitions in $\Bgot_\gamma$. Self-conjugate partitions and self-Mullineux partitions are never in a same set $\dr_l$, so that $\mu_k '$ is not self-conjugate. Hence we are only left with looking for partitions $\lambda$ with $\mu_k' \lessdom \lambda \lessdom \mu_k$ and $\dr\lambda - \dr\mu_k = \pm 1$, among self-conjugate partitions $\nu_1,\nu_2, \ldots, \nu_\frac{p-1}{2}$. Consider the possible two cases: $\delta=0$ of $\delta \geq 1$. 

If $\delta=0$, from Lemma \ref{lemma:drondb}, the partitions $\mu_1,\nu_1,\mu_2,\nu_2, \ldots \mu_\frac{p-1}{2},\nu_\frac{p-1}{2} $ belong respectively, in that same order, to sets $\dr_1, \dr_2, \dr_3, \dr_4, \ldots, \dr_\frac{p-3}{2}, \dr_\frac{p-1}{2}$. Then, the first column of $\mathbf{\widetilde{D}}_6$, column $\mu_1$, has possibly a $1$ only in row $\nu_1$, since $\dr\nu_1 - \dr\mu_k=1$, and the rest of entries in this column are equal to $0$ since $\dr\nu_i - \dr\mu_k=1$ for $i\neq 1$. For $1<k\leq \frac{p-1}{2}$, $\mu_k \in \dr_{2k-1}$ has only two $1$; one in row $\nu_{k-1}\in \dr_{2k-2}$ and one in row $\nu_{k}\in \dr_{2k}$. In this case, then $\mathbf{\widetilde{D}}_6$ takes the following form:
\[
\begin{array}{cccccc}
&\mu_1 & \mu_2 & \cdots & \mu_\frac{p-3}{2} &  \mu_\frac{p-1}{2} \\
\nu_1 & * & * &  &\\
\nu_2 & \cdot & * & \\
\vdots &  & & \ddots  \\
\nu_\frac{e-3}{2} &  & & & * & * \\
\nu_\frac{e-1}{2} &  & & & \cdot & *
\end{array}
\]

where ``$*$'' is either $0$ or $1$ (we will see that it is $1$) and dots are $0$. 

If $\delta \geq 1 $, the partitions $\nu_1,\mu_1,\nu_2,\mu_2, \ldots, \nu_\delta,\mu_\delta,\mu_{\delta+1},\nu_{\delta+1}, \ldots,  \mu_\frac{p-1}{2},\nu_\frac{p-1}{2}$ belong respectively, in that same order, to sets $\dr_1, \dr_2, \dr_3,$ $ \dr_4, \ldots, \dr_\frac{p-3}{2}, \dr_\frac{p-1}{2}$. Hence, for a similar reasoning, starting from column $\mu_{\delta+1}$, the matrix $\mathbf{\widetilde{D}}_6$ is of the same form as in the case $\delta=0$, but columns $\mu_k$ with $1\leq k \leq \delta$, have a $1$ in rows $\nu_k$ and $\nu_{k+1}$:
\[
\begin{array}{ccccccccc}
&\mu_1 & \mu_2 & \cdots & \mu_\delta & \mu_{\delta+1} & \mu_{\delta+2} & \cdots &  \mu_\frac{p-1}{2} \\
\nu_1 & * & \cdot\\
\nu_2 & * & *\\
\vdots &  & & \ddots\\
\nu_\delta & & & & * & \\ 
\nu_{\delta+1} & & & & & * & *\\
\nu_{\delta+2} & & & & & \cdot & *\\
\vdots & & & & &  & &\ddots\\
\nu_\frac{p-1}{2}  & & & & &  & && *
\end{array}
\]
In any of the two cases we can see that the total order 
\[
\nu_1 \succ \nu_2 \succ \cdots \succ \nu_{\delta-1} \succ \nu_\delta \succ \nu_\frac{p-1}{2} \succ  \nu_\frac{p-3}{2} \succ \cdots \succ \nu_{\delta+2} \succ \nu_{\delta+1},
\]
which is the order chosen in matrix $\mathbf{D}_6$, makes this matrix lower triangular.
It remains to prove that the entries in the diagonal are $1$. That is, for every $1 \leq k \leq \frac{p-1}{2}$, we have to prove that $d_{\nu_k\mu_k}=1$. For this, we use Tables $1$ and $2$ in \cite{richards}, adapted by Fayers in \cite[Proposition 3.1]{fayers}.
Recall that  $\nu_k=\Lr{\frac{p-1}{2}-k,\frac{p-1}{2}+k}$ and $\mu_k=\fn{\frac{p-1}{2}-k,\frac{p-1}{2}+k}$. From \cite[Proposition 3.1]{fayers}, we have in particular that if $\mu = \fn{i-1,j}$ and $\lambda= \lr{i-1,j}$ then $d_{\lambda\mu}=1$, since for $\mu=\mu_k$ and $\lambda=\nu_k$ we are exactly in that case, then $d_{\nu_k\mu_k}=1$. Hence $\mathbf{D}_6$ is lower unitriangular and this concludes the proof of Theorem \ref{th:main}.

\end{proof}

\begin{example}\label{ex:lastm}
Let $p=11$, $n=36$ and let $\gamma$ be the $11$-core $\gamma=(7,2,1^5)$. The self-Mullineux partitions in the block $\Bgot_\gamma$ of weight $2$ of $\F_{11}\mathfrak{S}_{36}$ are
\[
\mu_1=(9,8,6,4,3,2^3), \ \mu_2=(10,8,5,4,2^4,1), \  \mu_3=(12,8,4,2^5,1^2), \  \mu_4=(12,8,3,2^5,1^3), \ \mu_5=(18,2^7,1^4).
\]
They belong respectively to $\dr_2,\dr_4,\dr_6,\dr_7,\dr_9$.
The self-conjugate partitions in $\Bgot_\gamma$ are 
\[
\nu_1=(8^2, 6, 4, 3^2, 2^2), \ \nu_2=(9,8,5,4,3,2^3,1), \  \nu_3=(10,8,4^2,2^4,1^2), \  \nu_4=(12,8,2^6,1^4), \ \nu_5=(18,2,1^{16}).
\]
They belong respectively to $\dr_1,\dr_3,\dr_5,\dr_8,\dr_{10}$.
The matrix $\mathbf{\widetilde{D}}_\gamma$ is 
\[
\begin{array}{cccccc}
& \mu_1 & \mu_2 & \mu_3 & \mu_4 & \mu_5 \\
\nu_1 & 1 \\
\nu_2 & 1 & 1 \\
\nu_3 & \cdot & 1 & 1 \\
\nu_4 & \cdot & \cdot & \cdot & 1 & 1 \\
\nu_4 & \cdot & \cdot & \cdot & \cdot & 1 \\
\end{array}
\]
\demo
\end{example}

\bigskip
\noindent \textbf{Acknowledgements.} The author is grateful to Nicolas Jacon and Lo\"ic Poulain d'Andecy for the helpful discussions, ideas and careful reading and to Matthew Fayers for useful suggestions. The author is also thankful to the referee for their remarks.

\bibliographystyle{alpha}
\bibliography{biblio.bib}

\end{document}